\newcommand{\R}{{\Bbb R}}
\newtheorem{thm}{Theorem}
\newtheorem{lemma}[thm]{Lemma}
\newtheorem{corollary}[thm]{Corollary}
\newtheorem{proposition}[thm]{Proposition}
\newtheorem{definition}[thm]{Definition}
\newtheorem{remark}[thm]{Remark}
\newproof{proof}{Proof}
\begin{document}

\begin{frontmatter}



\title{Traveling waves for a model of the Belousov-Zhabotinsky reaction}

\author[a]{Elena Trofimchuk}
\author[b]{Manuel Pinto}
\author[d]{and Sergei Trofimchuk}
\address[a]{Department of Mathematics II,
National Technical University, Kyiv,  Ukraine
\\ {\rm E-mail: trofimch@imath.kiev.ua}}
\address[b]{Facultad de Ciencias, Universidad
de Chile,  San\-tia\-go, Chile  
\\ {\rm E-mail: pintoj@uchile.cl}}
\address[d]{Instituto de Matem\'atica y Fisica, Universidad de Talca, Casilla 747,
Talca, Chile \\ {\rm E-mail: trofimch@inst-mat.utalca.cl}}

\bigskip

\begin{abstract}
\noindent Following J.D. Murray, we consider a system of two  differential equations that models  traveling fronts  in the Noyes-Field theory of the Belousov-Zhabotinsky  (BZ) chemical reaction.  We are also interested in the situation when the system incorporates a delay $h\geq 0$. As we show, the BZ system has a dual character: it is monostable  when its key parameter $r \in (0,1]$ and it is bistable when $r >1$.  For $h=0, r\not=1$,  and for each admissible wave speed,  we prove the uniqueness  of monotone wavefronts.   Next, a concept of regular super-solutions is introduced as a main tool for  generating new comparison solutions for  the BZ system. This  allows  to improve all previously known upper  estimations for the minimal speed of propagation  in the BZ system, independently whether it is monostable, bistable, delayed  or not. Special attention is given to the critical case  $r=1$ which to some extent resembles to the Zeldovich equation. 
\end{abstract}
\begin{keyword} Belousov-Zhabotinsky reaction; comparison
solutions;  minimal speed; sliding solution method; bistable; monostable. \\
{\it 2000 Mathematics Subject Classification}: {\ 34K12, 35K57,
92D25 }
\end{keyword}

\end{frontmatter}

\newpage

\section{Introduction and main results}\vspace{0mm}
\noindent  One of useful  objects
associated with the famous Belousov-Zhabotinsky chemical reaction  is the following dimensionless
non-linear system \cite{Mur1,Mur2}
\begin{equation}\label{1}
\begin{array}{ll}
     u_t(t,x) = \Delta u(t,x)  + u(t,x)(1-u(t,x)-rv(t,x)),
    &    \\
     v_t(t,x) = \Delta v(t,x)  -b u(t,x)v(t,x), &
\end{array}%
\end{equation}
\noindent called the  Belousov-Zhabotinsky (BZ for short) reaction-diffusion system. The coefficients $r,b$ are
positive and $u,v$ correspond to the bromous acid and bromide ion
concentrations respectively. The front solution 
$(u,v) = (\phi, \theta)(\nu\cdot x + ct) $ of  system (\ref{1}) provides an
appropriate mathematical tool for the description of planar waves
propagating in a thin layer of reactant solution filled in a
Petri dish \cite{Mur2}. Due to the
chemical interpretation of  (\ref{1}), only non-negative
fronts are meaningful. Another requirement is the existence of
the limits $(\phi, \theta)(-\infty)=(0,a), \ (\phi, \theta)(+\infty)
= (1,0)$ with $a
>0$. The exact value of $a$ is not relevant: after rescaling $u,v$, we can take $a=1$.  By the experimental data \cite{Mur1,Mur2},  $r > 1$.  Nevertheless, 
almost all previous analytical studies of wavefronts (with two exceptions given in Propositions \ref{mu}, \ref{mu>1}) considered the case $r \in (0,1]$ which was proved to be of the monostable type. We observe that the standard definition   \cite{Volp} of monostability/bistability  needs an obvious modification in order to be applied to system (\ref{1})  which  has a continuum of non-negative equilibria.  The degeneracy of the equilibrium 
$(0,1)$ is a special feature of model (\ref{1})  complicating its analysis.  For example,  the
 recent Liang-Zhao general theory \cite{LZii} of spreading speeds for abstract monostable evolution systems can not be employed here despite the fact that system  (\ref{1}) is formally monostable and monotone for  $r\leq 1$.  This obliged us   in \cite{TPTv1}  to present  a complete proof of the existence of the minimal speed of front propagation in (\ref{1})  when $r\leq 1$. On the other hand,   we show here that, for each  $r>1$, the BZ system  possesses a unique wavefront solution, in full accordance with its formal bistability.  

Now, as it was argued in \cite{TPTv1}, a better theoretical prediction for propagation speeds in model (\ref{1}) can be also obtained by taking into account delayed effects during  the generation of  the bromous acid.  For simplicity, and in order to connect with various analytical investigations,  we will use here the following  delayed version of (\ref{1})  proposed by Wu and Zou in \cite{wz}: 
\begin{equation}\label{1de}
\begin{array}{ll}
     u_t(t,x) = \Delta u(t,x)  + u(t,x)(1-u(t,x)-rv(t-h,x)),
    &    \\
      v_t(t,x) = \Delta v(t,x)  -b u(t,x)v(t,x). &
\end{array}%
\end{equation}

During the last decades considerable efforts have been made in 
studying the wave propagation in (\ref{1}),  (\ref{1de}) . The attention was   
focused on the stability, numerical approximation \cite{MM, Mur2,Quin} and 
existence \cite{Ka1,Ka2,LL, Lv, ma,Mur1,Mur2,Troy,Volp,wz,yw}  of fronts.  After  linear changes, systems (\ref{1}), (\ref{1de}) acquire good monotonicity properties: they are quasi-monotone as partial differential equations  \cite{MS,Volp} and they  are  monotone in the sense of Wu and Zou \cite{wz}.  Hence,  the front existence may be handled by the standard comparison technique well established for several decades \cite{Volp,wz}. Thus the existence of fronts for the BZ system is not longer an issue,  in difference with the determination or satisfactory  approximation of  the minimal speed of  propagation in (\ref{1}), (\ref{1de}).  Precisely this problem   is our main concern here. It is quite noteworthy that a similar question (formulated as {\it linear  versus non-linear determinacy of the minimal speed}) for  a Lotka-Volterra  reaction-diffusion competition model 
has received a considerable attention during the last few years  \cite{GLi,wh,wh1}. Finally, our secondary concern is the uniqueness of wavefronts (cf. \cite{AGT}): since these have to be monotone, we prove their uniqueness in the non-delayed non-degenerate case (i.e. $r\not=1, h =0$) by applying the Berestycki-Nirenberg sliding solution argument \cite{BNa,chen}.  

\subsection{Some previously known  results}
\noindent For the sake of completeness, we state  the  most relevant  known  existence results for (\ref{1}), (\ref{1de}). First of them was proved in
\cite{Mur1,Mur2}, it gives a lower bound for the  admissible front speeds. Set
$$c_l:=\max\left\{2\Re\sqrt{1-r}, (\sqrt{r^2+2b/3}-r)/\sqrt{2b+4r}\right\}.
$$
\begin{proposition} \label{mu} Let $r,b >0$. If system (\ref{1}) has a positive componentwise monotone wavefront (or, shortly, monotone  wavefront)
connecting $(0,1)$ with $(1,0)$ then $c \geq c_l$. 
\end{proposition}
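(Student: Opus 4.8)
The idea is to analyze the behavior of a monotone wavefront $(\phi,\theta)$ at $-\infty$, where it approaches the equilibrium $(0,1)$. Writing $u = \phi$ and $v = 1 - \psi$ with $\psi \to 0$ at $-\infty$, the linearization of the profile equations around $(0,1)$ governs the leading-order decay. A wavefront of speed $c$ must decay toward this equilibrium in a way consistent with the linear equations, and this forces $c$ to be large enough that the relevant characteristic equation admits suitable roots. Concretely, I would first substitute the traveling-wave ansatz $(u,v)=(\phi,\theta)(\nu\cdot x+ct)$ into (\ref{1}) to obtain the ODE system
\begin{equation}\label{ode}
\phi'' - c\phi' + \phi(1-\phi-r\theta)=0, \qquad \theta'' - c\theta' - b\phi\theta = 0,
\end{equation}
with $(\phi,\theta)(-\infty)=(0,1)$, $(\phi,\theta)(+\infty)=(1,0)$, both components monotone.

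Next I would extract the two separate constraints that together give $c\ge c_l$. For the first bound $c\ge 2\Re\sqrt{1-r}$: near $-\infty$ we have $\theta\to 1$, so the $\phi$-equation linearizes to $\phi'' - c\phi' + (1-r)\phi \approx 0$, whose characteristic equation is $\lambda^2 - c\lambda + (1-r)=0$. Since $\phi>0$ and $\phi\to 0^+$ monotonically, $\phi$ cannot oscillate, so this quadratic must have a real positive root; the discriminant condition $c^2 \ge 4(1-r)$ is exactly $c \ge 2\Re\sqrt{1-r}$ (vacuous when $r>1$). The standard way to make the "no oscillation" step rigorous is to argue that if the roots were complex the positive solution $\phi$ would change sign near $-\infty$, contradicting positivity; one should be careful to rule out the degenerate possibility that $\phi$ decays faster than any exponential, which can be handled by a Phragmén–Lindelöf / integral-equation argument or by noting the $\phi$-equation is a perturbation of a constant-coefficient one.

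For the second bound involving $b$ and $r$: here one uses the $v$-equation more globally. Integrating $\theta'' - c\theta' = b\phi\theta$ over $\R$ and using $\theta(-\infty)=1$, $\theta(+\infty)=0$, $\theta'(\pm\infty)=0$ gives $c = b\int_{\R}\phi\theta\,dt$. One then needs upper bounds on $\int\phi\theta$ in terms of the same integral quantities; a natural route is to also integrate the $u$-equation, giving $c = c\phi|_{-\infty}^{+\infty}$ — wait, more carefully $\int(\phi''-c\phi')=-c$ and $\int\phi(1-\phi-r\theta) = c$, and to combine these with Cauchy–Schwarz or with pointwise estimates $\phi\le 1$, $\theta\le 1$ and the monotonicity to bound $\int\phi\theta$ from above by an expression that, after optimizing, yields $(\sqrt{r^2+2b/3}-r)/\sqrt{2b+4r}$. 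I expect this quadratic-in-$c$ estimate to be the more delicate of the two; getting the precise constant $2b/3$ and $2b+4r$ will require tracking the interplay between $\int\phi$, $\int\phi^2$, $\int\phi\theta$ carefully, presumably using that $\theta$ is monotone decreasing so $\int\phi\theta \le$ something controlled by where $\phi$ is concentrated.

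The main obstacle is the second estimate: the first is a routine linearization-and-positivity argument, but the $b$-dependent bound requires squeezing several conserved integral identities against each other with the right inequalities to land on exactly that algebraic expression, and verifying the optimization over the free parameter produces precisely $c_l$'s second term. If the clean integral approach does not directly yield the stated constant, the fallback is Murray's original method from \cite{Mur1,Mur2}, which likely estimates the wave by comparison with an auxiliary scalar problem; I would reconstruct that comparison and check the constants.
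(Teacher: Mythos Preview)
The paper does not prove Proposition~\ref{mu}; it is stated as a known result due to Murray \cite{Mur1,Mur2} and no argument is supplied. So there is no ``paper's own proof'' to compare against for the full statement. That said, the linear bound $c\ge 2\sqrt{1-r}$ (for $r\le 1$) is re-derived later in the paper as Lemma~\ref{pred}(a), and the method there differs slightly from yours: rather than arguing that complex characteristic roots would force sign changes in $\phi$, the paper passes to the Riccati variable $z(t)=\phi'(t)/\phi(t)$, which satisfies $z'+z^2-cz+(1-r)=f(t)$ with $f(t)\to 0$ at $-\infty$, and observes from the direction field that the a~priori bound $0<z(t)<c$ on a left half-line is incompatible with $c^2<4(1-r)$. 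This sidesteps the difficulty you flag about ruling out super-exponential decay: positivity of $\phi,\phi'$ gives $z>0$, and integrating the first equation on $(-\infty,t]$ gives $z<c$, so no Phragm\'en--Lindel\"of step is needed. Your oscillation argument is equally standard but requires exactly the extra care you mention.

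For the second, $b$-dependent bound your proposal is only a sketch, and you acknowledge as much. The identity $c=b\int_{\R}\phi\theta\,dt$ is correct, as is $c(1+r/b)=\int_{\R}\phi(1-\phi)\,dt$ from integrating the first equation, but you have not shown how to extract from these (or from any concrete inequality) the constant $(\sqrt{r^2+2b/3}-r)/\sqrt{2b+4r}$. The $1/3$ in the numerator strongly suggests that Murray's argument goes through the energy identity obtained by multiplying the $\phi$-equation by $\phi'$ and integrating (producing $c\int(\phi')^2 = \tfrac{1}{6} - r\int\phi\phi'\theta$, where $\int\phi^2\phi'=\tfrac{1}{3}$ is the source of the constant), together with the analogous identity for $\theta$, rather than through the plain integrals you wrote down. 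Since the present paper gives no details here, if you want a self-contained proof you will have to go back to \cite{Mur1,Mur2} and reconstruct that computation; the Cauchy--Schwarz route you outline does not, as written, land on the stated bound.
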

\vspace{-1.5mm}
It is easy to see that the estimation of Proposition
\ref{mu} has the form $c \geq c_l = 2\sqrt{1-r}$ for positive $r
\leq 11/12=0.917\dots$.
The next assertion summarizes the main existence results from
\cite{Ka1,Ka2,yw}. 
\vspace{-1.5mm}
\begin{proposition}\label{ka} System (\ref{1}) has a positive monotone  wavefront
$(u,v)(x,t) = (\phi, \theta)(\nu\cdot x + ct), $ \ $ |\nu|=1,$
connecting $(0,1)$ with $(1,0)$ for each velocity
$$ c \geq c_k= \left\{
\begin{array}{lll}
     2\sqrt{1-r},
    &  {\rm if} \ rb+r \leq 1 ;  \\ 
      2\sqrt{b}, & {\rm if \ either} \ b+r > 1, b < 1, \ r \in (0,1] \ {\rm or} \ b=1, r <1;\\
 2, & {\rm if} \ b> 1, \ r \in (0,1].
\end{array}%
\right.
$$
\end{proposition}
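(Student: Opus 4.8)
The plan is to produce the front via the monotone iteration scheme of \cite{wz}, started from explicit exponential super-solutions of the wave-profile system; the three branches in the definition of $c_k$ are simply three admissible choices of the exponential rate, and ``$c\ge c_k$'' is, in each case, exactly the condition guaranteeing that the chosen rate is real. Putting $(u,v)(t,x)=(\phi,\theta)(s)$, $s=\nu\cdot x+ct$, and $\psi:=1-\theta$, one needs a solution of
\begin{equation}\label{profka}
\phi''-c\phi'+\phi(1-r-\phi+r\psi)=0,\qquad \psi''-c\psi'+b\phi(1-\psi)=0,
\end{equation}
with $(\phi,\psi)(-\infty)=(0,0)$, $(\phi,\psi)(+\infty)=(1,1)$ and $0\le\phi,\psi\le1$. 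On $[0,1]^2$ the reaction field $(f_1,f_2):=(\phi(1-r-\phi+r\psi),\,b\phi(1-\psi))$ has $\partial_\psi f_1=r\phi\ge0$ and $\partial_\phi f_2=b(1-\psi)\ge0$, so \eqref{profka} is cooperative in the sense of \cite{wz}, and it suffices to exhibit bounded continuous $(\underline\phi,\underline\psi)\le(\bar\phi,\bar\psi)$ on $\R$ with $(\bar\phi,\bar\psi)(-\infty)=0$, $(\bar\phi,\bar\psi)(+\infty)=(1,1)$, $(\underline\phi,\underline\psi)\ge0$ nontrivial, such that $\bar\phi''-c\bar\phi'+f_1(\bar\phi,\bar\psi)\le0$, $\bar\psi''-c\bar\psi'+f_2(\bar\phi,\bar\psi)\le0$ and the reverse inequalities hold for the underlined pair (with the usual corner convention). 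The iteration then returns a monotone solution of \eqref{profka} squeezed between the two; since the equilibria of $(f_1,f_2)$ form the set $\{\phi=0\}\cup\{(1,1)\}$, monotonicity forces the limit at $+\infty$ to be $(1,1)$ as soon as $\phi\not\equiv0$, the strong maximum principle yields $0<\phi,\psi<1$ on $\R$, and $\theta:=1-\psi$ restores the required positive monotone wavefront joining $(0,1)$ to $(1,0)$.

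For the super-solutions take $\bar\phi(s)=\min\{e^{\lambda s},1\}$ and $\bar\psi(s)=\min\{Ke^{\lambda s},1\}$, with $\lambda$ always the smaller root of the quadratic named below --- so that ``$c\ge c_k$'' is precisely the reality condition for that root. If $rb+r\le1$: use $\lambda^2-c\lambda+(1-r)=0$ (whence $c\ge2\sqrt{1-r}$) and $K=1/r$; then $\bar\phi''-c\bar\phi'+f_1(\bar\phi,\bar\psi)=(Kr-1)e^{2\lambda s}=0$ and $\bar\psi''-c\bar\psi'+f_2(\bar\phi,\bar\psi)=\frac{rb+r-1}{r}\,e^{\lambda s}-\frac{b}{r}\,e^{2\lambda s}\le0$. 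If $rb+r>1$, $b<1$, $r\le1$: use $\lambda^2-c\lambda+b=0$ (whence $c\ge2\sqrt b$) and $K=1$; then $\bar\phi''-c\bar\phi'+f_1=e^{\lambda s}[(1-r)(1-e^{\lambda s})-b]\le(1-r-b)e^{\lambda s}\le0$ and $\bar\psi''-c\bar\psi'+f_2=-be^{2\lambda s}\le0$. If $b>1$, $r\le1$: use $\lambda^2-c\lambda+1=0$ (whence $c\ge2$) and $K=b$; then the crude bound $f_1(\bar\phi,\bar\psi)\le\bar\phi(1-\bar\phi)\le\bar\phi$ gives $\bar\phi''-c\bar\phi'+f_1\le e^{\lambda s}(\lambda^2-c\lambda+1)=0$, while $\bar\psi''-c\bar\psi'+f_2=(b-K)e^{\lambda s}-bKe^{2\lambda s}\le0$. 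In each case, on the region where an exponential first reaches $1$ the constant super-solution of the corresponding scalar equation takes over (note that $\psi\equiv1$ solves the second equation for every $\phi$), and the breaks of $\min\{\cdot,1\}$ are of the correct (concave) type, so $(\bar\phi,\bar\psi)$ is a genuine super-solution on all of $\R$. In the first case $c_k=2\sqrt{1-r}=c_l$ for $r\le11/12$, so there the estimate is already sharp.

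The sub-solutions are of the form $\underline\phi(s)=\max\{e^{\mu s}(1-Me^{\epsilon s}),0\}$, $\underline\psi(s)=\max\{\kappa e^{\mu s}(1-Me^{\epsilon s}),0\}$ with $0<\epsilon\ll1$, $M\gg1$, $\kappa>0$ suitable; the factor $1-Me^{\epsilon s}$ absorbs the quadratic parts of $f_1,f_2$ and keeps the pair below $(\bar\phi,\bar\psi)$ once $M$ is large. The rate $\mu$ is constrained by $\mu^2-c\mu+(1-r)\ge0$ (correct sign for the linear part of the first equation) and $\mu\ge\lambda$ (so that $\underline\phi\le\bar\phi$): in the first case $\mu=\lambda$ works and the front decays at the slow linear rate; in the other two cases $\lambda$ lies strictly between the two roots of $\lambda^2-c\lambda+(1-r)$, which pushes $\mu$ above the larger of those roots, and the constructed front turns out to be of the faster-decaying type --- consistent precisely because the super-solution decays more slowly, and the sub-solution more quickly, than that larger root. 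Finally, the endpoint $c=c_k$ follows by compactness: take $c_n\downarrow c_k$, normalize $\phi_{c_n}(0)=1/2$, and pass to a locally uniform limit using interior Schauder estimates; the limit is a monotone solution of \eqref{profka} at speed $c_k$, and the bounds $0\le\phi,\psi\le1$ together with monotonicity fix the limits at $\pm\infty$.

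The point at which I expect the real work to lie is the degenerate equilibrium $(0,1)$: the linearization of $(f_1,f_2)$ there has the eigenvalue $0$, so $\underline\psi$ cannot be prescribed independently of $\underline\phi$ near $-\infty$ --- the $\psi$-mode is slaved to the $\phi$-mode at the common rate --- and verifying that this forced behaviour stays below $\bar\psi$ is exactly what brings the thresholds ``$rb+r\le1$'' versus ``$rb+r>1$'' and ``$b<1$'' versus ``$b>1$'' into the definition of $c_k$. The remaining ingredients --- the corner estimates, the choices of $M$, $\epsilon$, $\kappa$, and the limit at the minimal speed --- are routine.
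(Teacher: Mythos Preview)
The paper does not prove Proposition~\ref{ka} at all: it simply cites Kanel \cite{Ka1,Ka2} (shooting) and Ye--Wang \cite{yw} (truncation). Your attempt via the Wu--Zou iteration is therefore a different route, and it is worth saying what works and what does not.

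Your super-solution computations are correct in all three branches, and in the first branch ($rb+r\le 1$, $\lambda$ the smaller root of $\chi(z)=z^{2}-cz+(1-r)$) the matching sub-solution $\underline\phi=e^{\lambda s}(1-Me^{\epsilon s})$ with $\lambda+\epsilon$ strictly between the two roots of $\chi$ indeed works: there $\chi(\lambda+\epsilon)<0$, the correction term $-M\chi(\lambda+\epsilon)e^{(\lambda+\epsilon)s}$ is \emph{positive}, and choosing $M$ large absorbs the quadratic remainder on the whole support.

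The gap is in branches two and three. You correctly observe that the super-solution rate $\lambda$ now lies strictly between the roots of $\chi$, and you propose to push the sub-solution rate $\mu$ at or above the larger root so that $\underline\phi\le\bar\phi$. But with $\mu\ge c/2$ the correction $-M\chi(\mu+\epsilon)e^{(\mu+\epsilon)s}$ is \emph{negative}, and at the corner $s_{0}=-\epsilon^{-1}\ln M$ one has, since $Me^{\epsilon s_{0}}=1$,
\[
\underline\phi''-c\underline\phi'+(1-r)\underline\phi
= e^{\mu s_{0}}\bigl[\chi(\mu)-\chi(\mu+\epsilon)\bigr]
= \epsilon\, e^{\mu s_{0}}(c-2\mu-\epsilon)<0,
\]
while the nonlinear term $(r\kappa-1)\underline\phi^{2}$ vanishes there. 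So the first sub-solution inequality fails in a one-sided neighbourhood of $s_{0}$, independently of $M$, $\epsilon$, $\kappa$. No choice of these parameters repairs this, and the claim that ``the remaining ingredients are routine'' is precisely where the argument breaks. The obstacle is structural: a sub-solution of KPP type necessarily carries the slow rate (the smaller root of $\chi$), which is \emph{slower} than your super-solution rate in branches two and three, so the ordering $\underline\phi\le\bar\phi$ cannot hold near $-\infty$.

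This decay-rate mismatch is exactly the difficulty that the paper's notion of \emph{regular super-solution} (Definition~\ref{defo}) and Theorem~\ref{7} are built to overcome: rather than forcing the sub-solution to the fast rate, one perturbs the super-solution by the slowly decaying corrector $(\phi_{A},\psi_{A})$ of Lemma~\ref{fa}, so that the KPP front (taken as sub-solution with $\underline\psi\equiv 0$) fits underneath. Kanel's original proofs avoid the issue entirely by shooting in phase space. Either device would close your gap; the bare Wu--Zou template with the exponential ansatz does not.
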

\begin{proof} The first condition was proved in \cite[Theorem
3]{Ka2} under additional assumption $r+b >1$ when $b<1$. For
 $r,b >0$ satisfying $rb+r < 1$ it was also
announced without proof as Theorem 4.2 in \cite{yw}.  The
second and the third conditions were established in \cite[Theorem
2]{Ka2}. \hfill $\square$
\end{proof}
Model (\ref{1de}) was considered in \cite{bn,LL,Lv,ma,wz},  from where we have the following 
\vspace{0mm}
\begin{proposition}\label{kade} Assume that $r \in (0,1)$. Then system (\ref{1de}) has a positive monotone  front
$(u,v)(x,t)=$ $(\phi, \theta)(\nu\cdot x + ct), $ \ $ |\nu|=1,$
connecting $(0,1)$ with $(1,0)$ if either one of the following conditions holds: 
(I) $b >1$ and $c > \max\{b, 2 \sqrt{b}\}$;  (II) 
$c>2\sqrt{1-r}$ is such that
$
b\exp(-0.5ch(c-\sqrt{c^2-4(1-r)})) +r \leq 1.$  Finally, system (\ref{1de}) can not have 
wavefronts propagating  at the velocity $c < 2\sqrt{1-r}$. 
\end{proposition}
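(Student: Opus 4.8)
The plan is to reduce (\ref{1de}) to a monotone system of profile equations, obtain existence from the upper--lower solution method together with the monotone iteration of Wu and Zou \cite{wz}, and obtain the non-existence statement by linearising at the unstable end state. Substituting $(u,v)(t,x)=(\phi,\theta)(z)$, $z=\nu\cdot x+ct$, $|\nu|=1$, and putting $\psi:=1-\theta$, one is led to
\[
\phi''-c\phi'+\phi\bigl(1-r-\phi+r\psi(z-ch)\bigr)=0,\qquad
\psi''-c\psi'+b\,\phi\,(1-\psi)=0,
\]
with $(\phi,\psi)(-\infty)=(0,0)$, $(\phi,\psi)(+\infty)=(1,1)$ and $0\le\phi,\psi\le1$. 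On the order interval $[0,1]^2$ each reaction term is nondecreasing in the foreign variable ($\psi$ in the first equation, $\phi$ in the second), so the system is quasimonotone with delay in the sense of \cite{wz}; hence any pair of bounded continuous sub/super solutions $(\underline\phi,\underline\psi)\le(\bar\phi,\bar\psi)$ carrying the prescribed limits at $\pm\infty$ produces, through the monotone iteration, a wavefront, its componentwise monotonicity being recovered either by a sliding argument or by choosing monotone iterates.

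For the necessary condition I would argue directly on the $\phi$-equation, which near $z=-\infty$ reads $\phi''-c\phi'+q(z)\phi=0$ with $q(z)=1-\phi(z)-r\theta(z-ch)\to 1-r>0$ (the delay is irrelevant here, since $\theta(z-ch)\to1$ regardless of $h$). The substitution $\phi=e^{cz/2}\rho$ gives $\rho''+(q(z)-c^2/4)\rho=0$; if $c<2\sqrt{1-r}$ then $q-c^2/4$ is bounded below by a positive constant on some half-line $(-\infty,z_0]$, so Sturm's comparison theorem forces $\rho$, hence $\phi$, to oscillate as $z\to-\infty$, contradicting $\phi\ge0$. The one subtlety is that $(0,1)$ is a degenerate rest state — the $\theta$-equation linearises to $\theta''-c\theta'=0$ — so a general spectral criterion is unavailable; but the decay of the leading component $\phi$ is still governed by the scalar $1-r$, which is all the argument uses.

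For existence under (II), set $\lambda_1:=\tfrac{1}{2}\bigl(c-\sqrt{c^2-4(1-r)}\bigr)$, the smaller (real, since $c>2\sqrt{1-r}$) root of $\lambda^2-c\lambda+1-r=0$, so $\lambda_1(c-\lambda_1)=1-r$. I would try the super solutions $\bar\phi(z)=\min\{e^{\lambda_1 z},1\}$, $\bar\psi(z)=\min\{\tfrac{b}{1-r}e^{\lambda_1 z},1\}$ — the characteristic identity annihilates the linear part and the quadratic terms then make the inequalities strict — and monostable-type sub solutions $\underline\phi(z)=\max\{e^{\lambda_1 z}-Me^{(\lambda_1+\varepsilon)z},0\}$, $\underline\psi(z)=\max\{\tfrac{b}{1-r}e^{\lambda_1 z}-Me^{(\lambda_1+\varepsilon)z},0\}$ for small $\varepsilon>0$ and large $M$. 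Condition (II) enters exactly in the super solution inequality for $\phi$: after inserting $\psi(z-ch)\le\tfrac{b}{1-r}e^{-\lambda_1 ch}e^{\lambda_1 z}$ it closes provided $rb\,e^{-\lambda_1 ch}\le1-r$, which is implied by $b\,e^{-\lambda_1 ch}+r\le1$; the delay enters only through the shift $z\mapsto z-ch$, i.e.\ the factor $e^{-\lambda_1 ch}$, and for $h=0$ the requirement reduces to $b+r\le1$. Since $\underline\phi,\underline\psi$ vanish for $z$ large, the limit $(\phi,\psi)(+\infty)=(1,1)$ must be established a posteriori, using that among the constant solutions of the profile system with values in $[0,1]^2$ only $(0,0),(0,1),(1,1)$ occur and that the first two are excluded for a solution bounded below by $\underline\phi$.

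Finally, under (I) condition (II) typically fails — for $b>1$ it would force $h$ large — so a genuinely different, ``large-$c$'' construction is needed. I would build a second family of sub/super solutions in which the $\theta$-component is slaved to $\phi$ through auxiliary linear equations; the two requirements $c>b$ and $c>2\sqrt b$ are precisely what makes those equations have real, correctly ordered characteristic roots and renders the coupling between the two equations subordinate (in $2\sqrt b$ one recognises the KPP speed carried by the coefficient $b$ in the term $b\,\phi(1-\psi)$). Assembling this second family, verifying the ordering and the behaviour across the corner points where the $\min/\max$ switch, and — in both (I) and (II) — upgrading the $+\infty$ limit to $(1,1)$ in the presence of the degenerate end state $(0,1)$, is the part I expect to be the real work; the monotone iteration itself is then routine.
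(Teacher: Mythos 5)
Note first that the paper does not actually prove Proposition \ref{kade}: it is a summary of results from the literature, with condition (I) cited to \cite[Theorem 3.1]{Lv}, condition (II) to \cite[Theorem 3.2]{ma}, and the non-existence statement to \cite{Mur2} ($h=0$) and \cite{LL} ($h\geq 0$). Your non-existence argument (substitution $\phi=e^{cz/2}\rho$ plus Sturm comparison near the degenerate state, with the correct observation that the delay is irrelevant there) and your treatment of (II) by the Wu--Zou monotone iteration with $\bar\phi=\min\{e^{\lambda_1 z},1\}$, $\bar\psi=\min\{\tfrac{b}{1-r}e^{\lambda_1 z},1\}$ are in substance reconstructions of those cited proofs and are sound in outline. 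The genuine gap is condition (I): there you offer only a plan (``I would build a second family \dots the part I expect to be the real work'') without producing the sub/super-solution pair, verifying the differential inequalities, or showing quantitatively where $c>b$ and $c>2\sqrt{b}$ enter. That construction is precisely the content of \cite[Theorem 3.1]{Lv} and is the nontrivial half of the statement; nothing from your (II) analysis can be recycled there, since for $b>1$ the inequality $rb\,e^{-\lambda_1 ch}\leq 1-r$ fails unless $h$ is large, which is exactly why a different family of comparison functions is needed. As written, part (I) is unproved.

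Two further points in the (II) part. (a) Your lower solution $\underline{\psi}=\max\{\tfrac{b}{1-r}e^{\lambda_1 z}-Me^{(\lambda_1+\varepsilon)z},0\}$ need not satisfy $\underline{\psi}''-c\underline{\psi}'+b\underline{\phi}(1-\underline{\psi})\geq 0$: on the set where it is positive the leading terms are $M\bigl[(\lambda_1+\varepsilon)(c-\lambda_1-\varepsilon)-b\bigr]e^{(\lambda_1+\varepsilon)z}+O(e^{2\lambda_1 z})$, and since $(\lambda_1+\varepsilon)(c-\lambda_1-\varepsilon)\leq c^2/4$ while condition (II) with $h>0$ allows $b$ as large as $(1-r)e^{\lambda_1 ch}>c^2/4$, the bracket can be negative for every admissible $\varepsilon$. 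The easy repair, which the paper itself uses in the proof of Theorem \ref{7}, is to take $\underline{\psi}\equiv 0$ (or a sufficiently small coefficient in front of $e^{\lambda_1 z}$); this costs nothing because the $\psi$-reaction term is nonnegative. (b) Your claim that the only constant solutions in $[0,1]^2$ are $(0,0),(0,1),(1,1)$ is false: every $(0,s)$, $s\in[0,1]$, is a constant solution --- this continuum is exactly the degeneracy of the rest state $(0,1)$ that the paper emphasizes. The conclusion you need still holds, but only via monotonicity of the iterates: a nondecreasing $\Phi$ with $\Phi(t)\geq\underline{\phi}(t_0)>0$ for $t\geq t_0$ forces the limit at $+\infty$ to be a constant solution with positive first component, hence $(1,1)$; make that the argument rather than the incorrect classification of equilibria.
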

\vspace{-1mm}
Note  that in the non-delayed case  Proposition \ref{kade} is weaker than
Propositions \ref{mu}, \ref{ka}. 
\vspace{-2mm}
\begin{proof} See \cite[Theorem 3.1]{Lv} for condition $(I)$ and 
\cite[Theorem 3.2]{ma} for condition $(II)$.  The final conclusion is known 
from \cite{Mur2} (for $h =0$) and \cite{LL} (for $h \geq 0$).  \hfill $\square$  \end{proof}
By \cite[Section 8]{Mur1},   all  wavefronts to (\ref{1})  are monotone.  On the other hand,  the  delayed response  may imply the loss of wave's monotonicity \cite{GT}. Therefore  it is worthy to
emphasize that the inclusion of delay as in  (\ref{1de}) does not change the monotone shape of fronts, see \cite[Theorem 6]{TPTv1}:
\begin{proposition} \label{mude}   If, for some   $r,b >0$,  system (\ref{1de}) has a wavefront
$(u,v) = (\phi, \theta)$ $(\nu\cdot x + ct),$  $  \  \phi>0$, \
$ |\nu|=1,$ connecting $(0,1)$ with $(1,0)$,  then $\phi'(t),
-\theta'(t) > 0$ and $\theta(t), \phi(t) \in (0,1)$ for all $t \in
{\mathbf R}$.
\end{proposition}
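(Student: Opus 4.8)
The plan is to exploit the monotonicity structure of the second equation first, then bootstrap to the first. Write $z = \nu\cdot x + ct$ and let $(\phi,\theta)(z)$ be the wavefront, so that $\phi,\theta$ satisfy the profile system
\begin{equation*}
\phi'' - c\phi' + \phi(1-\phi-r\theta(\cdot-ch)) = 0, \qquad \theta'' - c\theta' - b\phi\theta = 0,
\end{equation*}
with $(\phi,\theta)(-\infty) = (0,1)$, $(\phi,\theta)(+\infty) = (1,0)$, and $\phi > 0$ everywhere. First I would establish that $0 \le \theta \le 1$ and $0 \le \phi \le 1$ on all of $\mathbf{R}$. For $\theta$: the second-order operator $L\theta := \theta'' - c\theta' - b\phi\theta$ with $b\phi \ge 0$ satisfies a maximum principle on $\mathbf{R}$; since $\theta$ has finite limits $1$ and $0$, a positive interior maximum exceeding $1$ or a negative interior minimum is impossible (at such an extremum $\theta'' $ has the wrong sign against $-b\phi\theta$ and $\theta'=0$), and a sliding/translation-limit argument rules out the extremum escaping to $\pm\infty$. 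Hence $\theta \in [0,1]$; strong maximum principle then upgrades this to $\theta \in (0,1)$ unless $\theta$ is constant, which the boundary conditions forbid. For $\phi$: rewrite the first equation as $\phi'' - c\phi' + \phi\,g(z) = 0$ with $g(z) = 1 - \phi(z) - r\theta(z-ch)$; knowing $\phi > 0$ and $\phi(\pm\infty) \in \{0,1\}$, the same maximum-principle reasoning (now using that $\phi$ cannot exceed $1$: at a maximum $>1$ one has $\phi'' \le 0$, $\phi' = 0$, forcing $g \ge 0$, i.e. $\phi \le 1 - r\theta < 1$, a contradiction) gives $\phi \in (0,1)$.

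Next I would prove the strict monotonicity $\theta' < 0$. Since $b\phi\theta > 0$ on $\mathbf{R}$, the function $w := \theta'$ satisfies the linear equation $w'' - cw' - b\phi\, w = (b\phi)'\theta$; a cleaner route is to argue directly on $\theta$: any critical point $z_0$ of $\theta$ is, by the equation, a point where $\theta''(z_0) = b\phi(z_0)\theta(z_0) > 0$, i.e. a strict local minimum — so $\theta$ has at most one critical point, and since $\theta$ decreases from $1$ to $0$ it can have none, giving $\theta' < 0$ throughout. The same "every critical point is a strict local minimum" trick does not immediately apply to $\phi$ because the coefficient $g(z)$ in $\phi'' - c\phi' + g\phi = 0$ changes sign. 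Instead, I would study $\psi := \phi'$, which solves the variational equation $\psi'' - c\psi' + g(z)\psi = -\phi\,(g)' = \phi\big(\phi' + r\theta'(\cdot - ch)\big)$. The key observation is that near $-\infty$, where $\phi \to 0^+$ and $\theta \to 1$, linearization forces $\phi' > 0$ (the only admissible behavior of a positive solution decaying to $0$ is exponential growth from $-\infty$), and near $+\infty$ similarly $\phi' > 0$; so $\psi > 0$ for $|z|$ large. If $\psi$ vanished somewhere, let $z_0$ be a zero; then $z_0$ is an interior extremum of $\phi$, and I would derive a contradiction from the sign of the right-hand side of the $\psi$-equation together with $\theta' < 0$ already established — at a would-be local maximum of $\phi$ the forcing term $\phi(\phi' + r\theta'(\cdot-ch))$ has a definite sign that is incompatible with $\psi$ touching zero from above. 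Equivalently one can run the Berestycki–Nirenberg sliding argument on $\phi$ using $\theta$'s monotonicity as input.

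The main obstacle is precisely the sign-indefiniteness of the coefficient $g(z) = 1 - \phi - r\theta(\cdot - ch)$ in the $\phi$-equation: when $r > 1$ this is genuinely bistable and the naive maximum principle for $\phi'$ fails, while the delay shift $ch$ in the argument of $\theta$ obstructs treating the two equations as a cooperative system pointwise. I expect the resolution to require either (i) exploiting the already-proven monotonicity of $\theta$ to control the forcing term in the variational equation for $\psi = \phi'$ on the region where $g < 0$, or (ii) a translation-compactness argument: if $\phi$ is not monotone, slide a translate of $\phi$ until it first touches $\phi$ from above at an interior point and apply the strong maximum principle to the difference, which satisfies a linear equation with the delayed term handled by the monotonicity of $\theta$. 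Once $\phi' > 0$ and $\theta' < 0$ are in hand, the strict inclusions $\phi(t),\theta(t) \in (0,1)$ follow from the boundary limits and monotonicity, completing the proof.
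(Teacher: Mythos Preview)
The paper does not prove Proposition~\ref{mude}: it is quoted from the companion preprint \cite[Theorem~6]{TPTv1}, as the sentence introducing the proposition makes explicit. There is thus no proof in the present paper to compare your attempt against.

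On the merits of your sketch: the arguments for $\theta\in(0,1)$, $\phi\in(0,1)$ and $\theta'<0$ are correct and essentially complete --- in particular, the observation that every critical point of $\theta$ satisfies $\theta''=b\phi\theta>0$, hence is a strict local minimum, does force $\theta'<0$ everywhere. The step for $\phi'>0$, however, is left unfinished: you correctly name the obstacle (the sign-indefinite coefficient $g=1-\phi-r\theta(\cdot-ch)$) and list two possible resolutions, but carry out neither. Route (i) can in fact be completed by a short consecutive-critical-points argument. If $z_1$ is the first critical point of $\phi$ (coming from $-\infty$, where $\phi'>0$), the equation $\phi''(z_1)=-\phi(z_1)g(z_1)$ together with $\phi'>0$ on $(-\infty,z_1)$ forces $g(z_1)>0$ (the case $g(z_1)=0$ is excluded since then $\phi'''(z_1)=r\phi(z_1)\theta'(z_1-ch)<0$). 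So $z_1$ is a strict local maximum and $\phi'<0$ just after. Since $\phi(+\infty)=1>\phi(z_1)$ there is a next critical point $z_2>z_1$ with $\phi'<0$ on $(z_1,z_2)$; but then $\phi(z_2)<\phi(z_1)$ and, by the monotonicity of $\theta$ already established, $\theta(z_2-ch)<\theta(z_1-ch)$, giving $g(z_2)>g(z_1)>0$. This makes $z_2$ another strict local maximum, which is impossible since $\phi'<0$ immediately to its left. Two preliminaries your sketch also glosses over are the positivity of $c$ (obtained by integrating the $\theta$-equation over $\mathbf{R}$, as the paper does at the start of Section~\ref{etomumi}) and the fact that $\phi'>0$ near $-\infty$, which requires the asymptotic analysis of Section~\ref{etomumi} rather than a one-line appeal to linearization.
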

\subsection{General remarks about our approach and some useful relations}\label{ss13}
The speed of front propagation in  (\ref{1}), (\ref{1de}) can be estimated  by means of  the  truncation method 
\cite{yw}, the shooting technique \cite{Ka1,Ka2}  and the upper and lower solutions \cite{Lv,ma,wz}.  Here, we use the latter approach   complementing it  by a useful idea  about how  to generate new  comparison solutions. 
The main working tool will be regular super-solutions  defined in Section \ref{upp}. 
 Theorem \ref{7} from the mentioned section  is instrumental for the proofs of existence: its application with different regular super-solutions yields Theorems \ref{main1}, \ref{main2}. The same  super-solutions are then used in the bistable case, see Theorems  \ref{mumi>1}, \ref{mumi>2}. Conceptually,  Theorem \ref{7} is very close to highly non-trivial Theorem 1(iv) from \cite{chen} (see also \cite{chenFG}).  The proofs of Theorem \ref{7} and the mentioned Chen and Guo   result   are, however, completely different. 

Asymptotic expansions of the eventual
fronts at infinity are another key ingredient of our approach. In combination with  a sliding solution argument they lead to 
\begin{thm}\label{mumi>1} Let $r \not=1, \ b >0$.  Then  for each fixed admissible wave speed $c$  the monotone wavefront  $(u,v)=
(\phi, \theta)(\nu\cdot x + ct), $  $ |\nu|=1,$  connecting equilibria $(0,1)$ and $(1,0)$ of system  (\ref{1}) is unique  (up to a translation).  
\end{thm}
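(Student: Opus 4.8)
The plan is to run the Berestycki--Nirenberg sliding solution method, in the form used in \cite{BNa,chen}, on the cooperative reformulation of the BZ system. After the linear substitution $v\mapsto 1-v$, system (\ref{1}) with $h=0$ becomes a cooperative (quasi-monotone) reaction--diffusion system, and by Proposition \ref{mude} any wavefront $(\phi,\theta)$ of a given admissible speed $c$ has $\phi$ strictly increasing, $\theta$ strictly decreasing and $\phi,\theta\in(0,1)$; hence in the new variables the profile $\mathbf u=(\phi,1-\theta)$ is strictly increasing in each component and connects the equilibrium $(1,1)$ at $z=+\infty$ (which turns out to be hyperbolic) with the equilibrium $(0,0)$ at $z=-\infty$ (the degenerate one). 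Throughout I work with the second-order profile system $\mathbf u''-c\mathbf u'+\mathbf f(\mathbf u)=0$, whose nonlinearity has nonnegative off-diagonal Jacobian entries; along any front these entries are in fact \emph{strictly} positive because $\phi,\theta\in(0,1)$, and this genuine coupling is what will let the strong maximum principle bite.

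The first and decisive step is the asymptotic analysis of an arbitrary monotone front at each end. At $z=+\infty$ the linearization at $(1,0)$ splits off the characteristic equations $\nu^2-c\nu-1=0$ and $\mu^2-c\mu-b=0$, each having one negative root; the front therefore enters $(1,0)$ along the two-dimensional stable subspace and one obtains an expansion $1-\phi(z)=A_+e^{-\rho_1 z}+\cdots$, $\theta(z)=B_+e^{-\rho_2 z}+\cdots$ with $A_+,B_+>0$ and rates $\rho_1,\rho_2>0$ depending only on $c,b$ — in particular these rates and the positivity of the leading coefficients are common to all fronts of speed $c$. At $z=-\infty$ the linearization of the $\phi$-equation at $(0,1)$ is governed by $\lambda^2-c\lambda+(1-r)=0$. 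When $r>1$ this has roots $\lambda_-<0<\lambda_+(c)$, so $\phi\to0$ forces the single admissible rate, $\phi(z)=A_-e^{\lambda_+(c)z}+\cdots$ with $A_->0$ and the $\theta$-deviation slaved to the same exponential; when $0<r<1$ the roots $0<\lambda_-(c)\le\lambda_+(c)$ are both positive and one must show (following the scheme of \cite{TPTv1}) that a monotone front decays at the principal rate $\lambda_-(c)$ with a positive leading coefficient, ruling out the faster mode $e^{\lambda_+(c)z}$ and dealing with the resonance $\lambda_-(c)=\lambda_+(c)$ at the critical speed. The conclusion needed is: any two monotone fronts of the same admissible speed $c$ have identical exponential decay rates, with positive leading coefficients, at $-\infty$ and at $+\infty$.

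With this in hand, let $\mathbf u_1,\mathbf u_2$ be monotone fronts of speed $c$ and write $\mathbf u_1^\tau(z)=\mathbf u_1(z+\tau)$. Because $\mathbf u_1^\tau\to(1,1)$ uniformly on $[-M,\infty)$ as $\tau\to+\infty$, while on $(-\infty,-M]$ a large enough shift makes $\mathbf u_1^\tau$ dominate $\mathbf u_2$ (here the matched leading rate at $-\infty$ is essential), one gets $\mathbf u_1^\tau\ge\mathbf u_2$ componentwise for all large $\tau$; set $\tau^\ast=\inf\{\tau:\mathbf u_1^\tau\ge\mathbf u_2\}$, which is finite because the inequality fails near $+\infty$ once $\tau$ is sufficiently negative. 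By continuity $\mathbf u_1^{\tau^\ast}\ge\mathbf u_2$. If $\mathbf u_1^{\tau^\ast}\not\equiv\mathbf u_2$, then $\mathbf w:=\mathbf u_1^{\tau^\ast}-\mathbf u_2\ge0$ solves a linear cooperative system $\mathbf w''-c\mathbf w'+M(z)\mathbf w=0$ with strictly positive off-diagonal coefficients, so by the strong maximum principle for irreducible cooperative systems $\mathbf w>0$ in both components everywhere; using the common decay rates and the strict ordering of the leading coefficients of $\mathbf u_1^{\tau^\ast}$ and $\mathbf u_2$ at the two ends — which holds because $\tau^\ast$ lies strictly between the two ``critical shifts'' obtained by matching the $\pm\infty$ leading coefficients, otherwise the refined expansions force $\mathbf w\equiv0$ — one checks that $\mathbf u_1^{\tau^\ast-\epsilon}\ge\mathbf u_2$ still holds for small $\epsilon>0$: on compact sets because $\mathbf w$ has a positive lower bound there, and near $\pm\infty$ because the leading asymptotic coefficients of the difference vary continuously in $\epsilon$ and remain positive. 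This contradicts the minimality of $\tau^\ast$, hence $\mathbf u_1^{\tau^\ast}\equiv\mathbf u_2$, and the monotone front of speed $c$ is unique up to translation.

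The step I expect to be the main obstacle is the asymptotic analysis at the degenerate end $z=-\infty$ in the monostable range $0<r<1$: proving that every monotone front decays exactly at the principal rate $\lambda_-(c)$ (with a positive leading coefficient) rather than at the faster rate $e^{\lambda_+(c)z}$, and controlling the critical speed where the two rates coincide; this is precisely what makes the ``near $-\infty$'' comparison in the sliding step legitimate. Everything else is comparatively standard — the hyperbolic end $+\infty$ and the bistable end ($r>1$ at $-\infty$) give classical saddle-type asymptotics, and once the coupling is known to be genuine (Proposition \ref{mude}) the strong-maximum-principle and sweeping machinery of \cite{BNa,chen} applies in the usual way.
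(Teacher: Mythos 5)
Your overall strategy --- end asymptotics plus Berestycki--Nirenberg sliding on the cooperative form of the profile system --- is the same as the paper's, but the two places where you would have to work hardest are asserted rather than proved. First, you build the argument on the claim that, for $0<r<1$, every monotone front decays at $-\infty$ exactly at the principal rate $\lambda_-(c)$ with a positive leading coefficient, and you yourself flag the proof of this as the main obstacle without supplying it. The paper neither proves nor needs this fact: Lemma \ref{pred} and Corollary \ref{posl} only give that the decay rate is one of $\lambda(c),\mu(c)$, and the uniqueness proof is organized so that, after translation, either the two fronts share the leading exponential term at $-\infty$ or one of them strictly dominates the other on a whole neighborhood of $-\infty$; both alternatives are then fed into the sliding. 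As written, your proof has a hole precisely at this prerequisite (and the critical speed, where $\lambda=\mu$ and a factor $(-t)$ appears, is also left untouched). Note in addition that the $-\infty$ equilibrium is non-hyperbolic (the $\psi$-equation linearizes to $\psi''-c\psi'=0$), so for $r>1$ the $\psi$-component behaves like $Ae^{ct}$, a rate different from $\phi\sim e^{\mu(c)t}$ --- not ``slaved to the same exponential'' as you state (see Lemma \ref{pred>1}).

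Second, the decisive step of decreasing $\tau^{\ast}$ is not justified. After the strong maximum principle gives $\mathbf{w}=\mathbf{u}_1^{\tau^{\ast}}-\mathbf{u}_2>0$, the obstruction on the unbounded line is that the infimum of $\mathbf{w}$ may be attained only at $\pm\infty$, i.e.\ $\tau^{\ast}$ may coincide with the critical shift matching the leading coefficients at one end; in that case the refined expansions do \emph{not} ``force $\mathbf{w}\equiv 0$'' --- the difference can be strictly positive but of higher order there, decreasing $\tau$ produces no contradiction, and a genuinely different mechanism is needed. This is exactly where the paper invests most of its effort: it first proves $s_{\ast}=0$ for the $\psi$-shift and then treats the three mutual positions of $t_{\ast}$ and $0$ (Cases A--C) by auxiliary families combining horizontal and vertical shifts ($\psi_1(t-\delta)+a$, $\epsilon+\psi_1(t-\delta)$, $2\epsilon r+\phi_1(t-\delta)$) together with structure-specific inequalities such as $\phi_2(T)-2r(1-\psi_1(T-\delta))>0$. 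Two further points you gloss over: a single translation need not order both components simultaneously (the paper must juggle two a priori different shifts $s_{\ast}$ and $t_{\ast}$), and at $+\infty$ the $\phi$-component carries two competing modes $e^{\tilde\zeta_1 t}$ and $e^{\zeta_1 t}$ (Lemma \ref{mainas}), so ``matching the leading coefficients at $+\infty$'' is not a single scalar normalization. Until these gaps are filled, the proposal is a plausible outline of the paper's method rather than a proof.
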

\vspace{0mm}
We  also will need the following relations  between 
the components  of wavefront profile: 
\vspace{0mm}
\begin{thm}  \label{mda}  Consider  $\phi, \theta$ as in Proposition \ref{mude} and set $\psi(t): = 1- \theta (t)$. We have
\begin{enumerate}
\item[{\bf A.}]  Let $r \in (0,1), \ K \geq 1,   L \in (0,1]$  satisfy $K \geq  b/(1-r) \geq L$.  Then
\begin{equation}\label{oza}
L \phi(t-ch) < \psi(t) < K\phi(t), \quad t \in \mathbf{R}.
\end{equation}
If $b+r =1$, then  $\phi(t-ch) \leq  \psi(t)\leq \phi (t)$. Hence, if $h=0$ then $\phi \equiv \psi$. 
\item[{\bf B.}] 
Let $r\geq 1$. Then  $\psi(t) > \phi(t), \ t \in \R$. 
\item[{\bf C.}] 
Suppose that $r \in (0,1]$,  then  $\psi^2(t) < M\phi(t), \ t \in \R, \ M:=\max\{1,2b\}$.  
  \end{enumerate}
\end{thm}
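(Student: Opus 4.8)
The proof of Theorem~\ref{mda} rests on a maximum--principle/sliding argument applied to the profile equations. Throughout I write $z=\nu\cdot x+ct$ for the moving coordinate and $\psi:=1-\theta$, so that $\psi(-\infty)=0$, $\psi(+\infty)=1$ and, by Proposition~\ref{mude}, $\phi'>0$, $\psi'>0$, $\phi,\psi\in(0,1)$, whence $\phi(z-ch)\le\phi(z)$ and $\psi(z-ch)\le\psi(z)$. Substituting the front into the system turns the theorem into a statement about the ODE profile system
\begin{equation*}
\phi''-c\phi'+\phi\,g(z)=0,\qquad \psi''-c\psi'+b\,\phi(z)\bigl(1-\psi(z)\bigr)=0,\qquad g(z):=(1-r)-\phi(z)+r\,\psi(z-ch).
\end{equation*}
The plan is, for each claimed inequality, to introduce an auxiliary function built from $\phi$ and $\psi$, to check that it has the right sign at $z=\pm\infty$ using the asymptotic behaviour of the profile, and then to exclude an interior zero of the wrong sign by inserting the critical--point relations into the two equations; in every case the first--order terms cancel and one is left with an inequality involving only $g$, which the hypotheses on $r,b,K,L,M$ are tailored to contradict.

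For parts \textbf{A} and \textbf{B} the natural object is the ratio $\rho:=\psi/\phi$. One has $\rho(+\infty)=1$; a linearisation of the system at the degenerate state $(0,0)$ gives $\rho(-\infty)=b/(1-r)$ when $r<1$, while for $r\ge1$ the unstable characteristic exponent of the $\phi$--equation at $(0,0)$ is $\ge c$, which forces $\psi$ to decay strictly slower than $\phi$ and hence $\rho(-\infty)=+\infty$. For the upper bound in \textbf{A}: if $\rho>K$ somewhere, then, since $\rho(\pm\infty)\le K$, the supremum of $\rho$ is attained at a finite $z_0$ where $\rho'(z_0)=0$ (so $\psi'/\psi=\phi'/\phi$ there) and $\rho''(z_0)\le0$; feeding the ODEs into the last inequality and cancelling yields $g(z_0)\le b\phi(z_0)\bigl(1-\psi(z_0)\bigr)/\psi(z_0)$, and then $\psi(z_0)>K\phi(z_0)$ together with $K\ge b/(1-r)$ gives $g(z_0)<(1-r)\bigl(1-\psi(z_0)\bigr)$; writing out $g(z_0)$ (for $h=0$) this collapses to $\psi(z_0)<\phi(z_0)$, contradicting $\psi(z_0)>K\phi(z_0)\ge\phi(z_0)$. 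The lower bound is the mirror image, examined at an interior minimum of $\psi(\cdot)/\phi(\cdot-ch)$, and the borderline $b+r=1$ is precisely the case $K=L=1$, in which for $h=0$ both $\phi$ and $\psi$ satisfy the same Fisher--type equation, forcing $\phi\equiv\psi$. Part \textbf{B} is the same argument with $r\ge1$: at an interior minimum $z_0$ of $\rho$ one obtains $g(z_0)\ge b\phi(z_0)\bigl(1-\psi(z_0)\bigr)/\psi(z_0)>0$, while $g(z_0)\le(1-r)-\phi(z_0)+r\psi(z_0)\le(1-r)\bigl(1-\phi(z_0)\bigr)\le0$, a contradiction.

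For part \textbf{C} I would instead work with $W:=M\phi-\psi^2$, which satisfies $W(-\infty)=0$ (indeed $\psi^2=o(\phi)$ there when $r<1$) and $W(+\infty)=M-1\ge0$. If $W<0$ somewhere, then at a finite interior negative minimum $z_0$ the identity $W''-cW'=-M\phi\,g-2(\psi')^2+2b\phi\psi(1-\psi)$ and $W''(z_0)\ge0$ give $M\,g(z_0)\le 2b\psi(z_0)\bigl(1-\psi(z_0)\bigr)$; on the other hand $\psi^2(z_0)>M\phi(z_0)$ yields $\phi(z_0)<\psi^2(z_0)/M$, hence (for $h=0$) $g(z_0)>(1-r)-\psi^2(z_0)/M+r\psi(z_0)$. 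Combining and setting $s:=\psi(z_0)\in(0,1)$ reduces the whole thing to $M(1-r)+(2b-1)s^2-(2b-Mr)s<0$; checking the two cases $M=2b$ and $M=1$ forced by $M=\max\{1,2b\}$ shows the left side is $\ge0$ on $[0,1]$ and vanishes only at $s=1$, which is the required contradiction.

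The main obstacles I foresee are two. First, justifying the $z\to-\infty$ asymptotics rigorously, especially in the critical case $r=1$ occurring in \textbf{B} and \textbf{C}: there the $\phi$--component is governed by a centre direction of its linearisation and no longer decays like a pure exponential, so the claims $\rho(-\infty)=+\infty$ and $\psi^2=o(\phi)$ require a separate local analysis --- this is the Zeldovich--type degeneracy highlighted in the introduction. Second, the delayed case $h>0$: the clean cancellations above use $h=0$ through $\psi(z_0-ch)=\psi(z_0)$, and for $h>0$ the term $\psi(z_0-ch)$ is genuinely smaller, so one must either iterate the monotonicity estimates, or use the $\phi$--equation itself to bound $\psi(z_0-ch)$ from below, or run the comparison against the shifted profile $\phi(\cdot-ch)$ from the outset --- which is exactly why the lower estimate in \textbf{A} is stated with $\phi(t-ch)$ rather than $\phi(t)$.
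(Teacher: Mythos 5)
Your overall strategy (interior extremum of an auxiliary function plus the profile equations) is the right one, and your part \textbf{C} is essentially the paper's proof: the paper also works with $M\phi-\psi^2$ and the same quadratic $M(1-r)+s(Mr-2b)+s^2(2b-1)$. But in parts \textbf{A} and \textbf{B} there is a genuine gap. By choosing the ratio $\rho=\psi/\phi$ as the auxiliary function, your argument stands or falls with the boundary behaviour $\rho(-\infty)=b/(1-r)$ (for $r<1$) and $\rho(-\infty)=+\infty$ (for $r\ge 1$), which you assert from a formal linearisation at the degenerate equilibrium and yourself flag as an ``obstacle''. This is not a removable technicality: it amounts to knowing the relative decay rates of $\psi$ and $\phi$ at $-\infty$, i.e.\ precisely the asymptotic information that in the paper is established \emph{after} Theorem \ref{mda} and partly \emph{by means of} it (Corollary \ref{posl} obtains the expansion of $\psi$ at $-\infty$ from Theorem \ref{mda}[A], and the $r=1$ analysis in Section \ref{etomumi} invokes Theorem \ref{mda}[B]; for $r=1$ the decay is algebraic, not exponential, so no linearisation gives it for free). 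Without an independent derivation of these limits (possible, but requiring a Perron/Mallet-Paret type argument plus a priori exponential bounds that you do not supply), the claim that the supremum/infimum of $\rho$ is attained at a finite point is unjustified, and the whole extremum argument collapses. A second, smaller gap is the delay: your cancellations are carried out only for $h=0$, with the case $h>0$ left as a list of possible fixes.

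The paper avoids both issues by comparing \emph{differences} rather than ratios: it sets $z=K\phi-\psi$, $z=L\phi(\cdot-ch)-\psi$ for \textbf{A}, $z=\psi-\phi$ for \textbf{B} (and $z=M\phi-\psi^2$ for \textbf{C}). Since $\phi(\pm\infty)$, $\psi(\pm\infty)$ are prescribed by the wavefront boundary conditions, the limits $z(-\infty)=0$, $z(+\infty)\ge 0$ are immediate, no asymptotic expansion at $-\infty$ is needed, and the delay is absorbed pointwise through the monotonicity $\phi(t-ch)\le\phi(t)$ together with the sign of $1-\psi$. If you want to keep your ratio formulation, you must first prove the stated limits of $\psi/\phi$ at $-\infty$ without circularity; otherwise switch to the difference functions, after which your critical-point computations go through essentially verbatim for all $h\ge 0$. (Your borderline claim for $b+r=1$, $h=0$ also needs the non-strict two-sided bound $\phi(\cdot-ch)\le\psi\le\phi$ rather than an appeal to uniqueness for the Fisher equation, which by itself only yields coincidence up to translation.)
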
 
By part [A], the BZ system with  
$h=0, b+r =1$ essentially reduces to the KPP-Fisher equation \cite{GT, Mur1}.  Part [B] has a clear chemical interpretation:  the sum of the (normalized) concentrations of the bromous acid and bromide ion in the propagating wavefront
is strictly less than the concentration of the bromide ion far ahead of the wavefront. 
Part [C] connects  (\ref{1de})  with the delayed Zeldovich 
equation $
     u_t(t,x) = \Delta u(t,x)  + \beta u^2(t-h,x)(1-u(t,x)), \ \beta = \min\{b, 0.5\}.  
$
\noindent Actually this relation suggested the correct form of asymptotic expansions (\ref{a1}) below (see also 
\cite[Lemma 26 and Corollary 27]{TPTv1}). 

\subsection{Main results: monostable case}\label{mss13}
For the non-delayed BZ reaction  (\ref{1}) and $r \in (0,1)$, the existence of  the minimal speed of front propagation  $c_*(\Pi)$ was proved in \cite[p. 333]{Volp}.  
The speed $c_*(\Pi)$, however, is minimal only  for  the fronts  taking values in special  domains $\Pi$ called the balance polyhedrons.  Since the BZ system has a continuum of equilibria, none of these domains 
can cover the whole region  admissible for wavefronts, see \cite[Fig. 5.1, p. 334]{Volp}. 
The existence of the positive minimal speed independent on $\Pi$  was established  in  \cite[Theorem 7]{TPTv1}, by means of  regular super-solutions.  By Theorem 8 below, $c_* = 2 \sqrt{1-r}$  if $rb\exp(-2h(1-r))+r \leq 1$. However, due to 
Proposition \ref{mu},  it may happen that  $c_*$ is not 
linearly determined  (i.e.  $c_* > 2 \sqrt{1-r}$), cf. \cite{GLi,wh,wh1}.   Even for the non-delayed BZ system, the exact value of $c_*$ in 
the case $rb+r > 1$ is  unknown and represents an interesting open problem. The next theorems show that  the use of regular super-solutions  in the Wu and Zou approach \cite{wz}  yields important improvements of the estimations of $c_*$ even for the non-delayed  model. Set  $b':=be^{-c^2h/2}$ and let $c_\#= c_\#(r,b,h)$ be the unique positive root  \cite{TPTv1} of the equation
\vspace{-1mm} 
\begin{equation}\label{mins}
c=2\max\left\{\Re\sqrt{1-r},
\frac{\sqrt{b'}}{\sqrt{1+b'}}\right\} =  \left\{
\begin{array}{ll}
     2\sqrt{1-r},
    &  {\rm if} \ rb\exp(-2h(1-r))+r \leq 1;  \\
      2\sqrt{b'}/\sqrt{1+b'}, & {\rm if} \ rb\exp(-2h(1-r))+r \geq 1.
\end{array}%
\right. 
\end{equation}
\begin{thm} \label{main1} Let $r \in (0,1], \ c \geq c_\#$. Then 
system (\ref{1de}) has a positive monotone  front $(u,v)=
(\phi, \theta)(\nu\cdot x + ct), $  $ |\nu|=1,$ connecting $(0,1)$
with $(1,0)$ and such that    (i) if  $r=1, \ c > c_\#,$ then 
\begin{eqnarray} \label{a1}
\phi(t) &=&  \frac{2c^2/b}{t^2} -\frac{8c}{3b}(c^2(1+h+\frac1b)-4)\frac{\ln(-t)}{t^3} + O(\frac{1}{t^3}), \\
\nonumber \theta(t) &=&  1+\frac{2c}{t}- \frac{4}{3}(c^2(1+h+\frac1b)-4)\frac{\ln(-t)}{t^2} + O(\frac{1}{t^2}),  \ t \to -\infty;
\end{eqnarray}
(ii) if $r\in (0,1), \ c > 2\sqrt{1-r}$, then, for some $\varepsilon >0$ and $\lambda: = 0.5(c-\sqrt{c^2-4(1-r)})$, it holds
\vspace{-2mm} 
\begin{eqnarray}\label{a2}
\phi(t) =  e^{\lambda t} + O(e^{(\lambda+\varepsilon) t}) , \quad
 \theta(t) = 1 - \frac{be^{\lambda (t-ch)}}{1-r} + O(e^{(\lambda+\varepsilon) t}) ,  \  \  t \to -\infty. 
\end{eqnarray}
\vspace{-4mm}
\end{thm}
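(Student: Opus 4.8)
The plan is to handle the existence of the front and its asymptotics at $-\infty$ separately, the second part being where the real work lies. By Theorem~\ref{7} the existence reduces, for each $c\geq c_\#$ (and $c>c_\#$ when $r=1$), to exhibiting a single regular super-solution of the profile system on $\R$, after which the componentwise monotonicity of the resulting front is automatic by Proposition~\ref{mude}. For $r\in(0,1)$ I would build the super-solution from the principal exponential of the linearization at $(0,1)$ --- schematically $\min\{1,e^{\lambda t}\}$ in the $\phi$-slot and $\min\{1,\tfrac{b}{1-r}e^{\lambda t}\}$ in the $\psi=1-\theta$-slot, with $\lambda=\tfrac12\big(c-\sqrt{c^{2}-4(1-r)}\big)$ --- and a direct check of the super-solution inequalities shows this works precisely under the dichotomy of~(\ref{mins}): when $rb\exp(-2h(1-r))+r\leq1$ the ``linear'' value $c_\#=2\sqrt{1-r}$ is admissible, otherwise one is pushed up to $c_\#=2\sqrt{b'}/\sqrt{1+b'}$, $b'=be^{-c^{2}h/2}$. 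When $r=1$ this exponential construction degenerates, and I would replace it by an algebraically decaying super-solution modeled, through part~[C] of Theorem~\ref{mda}, on the delayed Zeldovich front --- behaving like $1+2c/t$ in $\theta$ and like $(2c^{2}/b)t^{-2}$ in $\phi$ near $-\infty$ and suitably capped at $+\infty$; verifying that this object really is a regular super-solution on all of $\R$ for every $c>c_\#$ is the one computation in this part that needs genuine care.

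For the asymptotics when $r\in(0,1)$, $c>2\sqrt{1-r}$, take a front $(\phi,\theta)$ --- monotone by Proposition~\ref{mude} --- and set $\psi=1-\theta$; part~[A] of Theorem~\ref{mda} makes $\phi$ and $\psi$ mutually comparable, hence both decay to $0$ at $-\infty$, and a first comparison with the super-solution gives the a priori bound $\phi,\psi=O(e^{\lambda t})$. The profile $\phi$ satisfies $\phi''-c\phi'+(1-r)\phi=\phi^{2}-r\,\phi\,\psi(\cdot-ch)$, whose right-hand side is then $O(e^{2\lambda t})$, so a variation-of-constants analysis against the real roots $\lambda_{\pm}=\tfrac12\big(c\pm\sqrt{c^{2}-4(1-r)}\big)$, together with a positivity argument ruling out the strong rate $\lambda_{+}$, yields $\phi(t)=Ce^{\lambda t}+O(e^{(\lambda+\varepsilon)t})$ with $\lambda=\lambda_{-}$, $C>0$ and $\varepsilon=\min\{\lambda_{+}-\lambda_{-},\lambda_{-}\}>0$; a translation normalizes $C=1$. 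Feeding this into $\psi''-c\psi'+b\phi(1-\psi)=0$ and using $\lambda^{2}-c\lambda=-(1-r)$ reads off the leading term of $\psi$ exactly as in~(\ref{a2}), and one bootstrap step improves the remainders to the claimed order.

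For $r=1$, $c>c_\#$, the linear part in the $\phi$-direction degenerates, the decay becomes algebraic, and the system genuinely resembles $u_{t}=\Delta u+\beta u^{2}(t-h,x)(1-u)$ with $\beta=\min\{b,1/2\}$, as suggested by part~[C] of Theorem~\ref{mda}. I would follow the matched-expansion scheme of \cite[Lemma~26 and Corollary~27]{TPTv1}: solve the second profile equation for $\phi$, i.e.\ $\phi=(c\psi'-\psi'')/(b(1-\psi))$, substitute into the first one with $r=1$, and insert the ansatz $\theta(t)=1+2c/t+p\,t^{-2}\ln(-t)+q\,t^{-2}+o(t^{-2})$. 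The leading balance forces the coefficients $2c$ and $2c^{2}/b$ of~(\ref{a1}); at the next order the forcing term is resonant with the kernel of the degenerate operator $\partial^{2}-c\partial$, which is exactly why a logarithm must be inserted, and matching pins down $p$ --- hence the coefficients of $\ln(-t)/t^{2}$ and $\ln(-t)/t^{3}$, into which the delay enters via the Taylor correction $\psi(t-ch)=\psi(t)-ch\,\psi'(t)+\cdots$. To upgrade this formal computation to a genuine asymptotics I would subtract the explicit partial sum, recast the equation for the remainder as a linear integral equation with a contractive kernel in a weighted $C^{0}$-space adapted to $\partial^{2}-c\partial$, and run a fixed-point argument there, which also delivers the stated $O(1/t^{3})$ and $O(1/t^{2})$ bounds.

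The main obstacle is clearly the $r=1$ expansion: carrying the matching past leading order, producing the logarithmic corrections with the correct delay-dependent coefficients, and --- above all --- passing from a formal series to a rigorously controlled remainder against the degenerate operator $\partial^{2}-c\partial$, for which the Zeldovich-type comparison of part~[C] of Theorem~\ref{mda} and the apparatus of \cite{TPTv1} appear indispensable. A secondary and purely computational point is the verification that the algebraic super-solution used when $r=1$ satisfies the full system of super-solution inequalities on all of $\R$ for every admissible $c>c_\#$.
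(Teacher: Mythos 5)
There is a genuine gap, and it sits exactly at the point your plan treats as routine: the choice of super-solution. Your candidate for $r\in(0,1)$ decays at the ``correct'' rate $\lambda$, but such a function is not a regular super-solution in the sense of Definition~\ref{defo}: condition {\bf D1} demands decay rate $\nu$ strictly inside $(\lambda,\mu)$, so Theorem~\ref{7} does not apply to it (the paper even remarks that upper solutions with the true wavefront asymptotics, as in \cite{ma,wz}, are precisely the ones excluded by {\bf D1}). More importantly, the ``direct check'' you invoke does not give the dichotomy (\ref{mins}). With $\phi_+=e^{\lambda t}$, $\psi_+=De^{\lambda t}$, $D=be^{-\lambda ch}/(1-r)$, the first inequality of (\ref{3fs}) reduces to $e^{2\lambda t}(rD-1)<0$ because $\chi(\lambda,c)=0$; hence you need $rbe^{-\lambda ch}+r<1$, which is just the classical Kanel/Ma-type condition and never reaches the second branch $c_\#=2\sqrt{b'}/\sqrt{1+b'}$ of (\ref{mins}). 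The paper's key idea, which your proposal misses, is to take the ``wrong'' rate $\nu$ close to $c/2$, where $\chi(\nu,c)<0$ supplies the slack to absorb the quadratic term; Lemma~\ref{lii} then yields the super-solution exactly under $(c\nu-\nu^2)(1+1/b')>1$, i.e.\ $c>c_\#$, and Theorem~\ref{7} repairs the asymptotics by adding the small corrections $\phi_A,\psi_A$ and sandwiching with a KPP--Fisher lower solution. The same criticism applies to your $r=1$ construction: an algebraically decaying super-solution also violates {\bf D1}; in the paper the super-solution remains exponential (Lemma~\ref{lii} with $r=1$), while the algebraic profiles $\phi_T,\psi_Q$ enter only as corrections (Remark~\ref{post}) and, shifted, as lower solutions (Lemma~\ref{ojala}), and (\ref{a1}) is then read off directly from the two-sided bound (\ref{zer}) --- no matched expansion or fixed-point argument for the remainder is needed.

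Two further points. First, at the critical speed $c=c_\#$ the strict super-solution inequalities fail, and the paper needs the separate limiting argument of Corollary~\ref{difi} (a sequence $c_j\downarrow c_\#$, normalization, compactness, plus parts [A] and [C] of Theorem~\ref{mda} to show $\hat\psi(-\infty)=0$); your proposal does not address this case, although $c=c_\#$ is part of the statement. Second, in part (ii) your ``positivity argument ruling out the strong rate'' is unsubstantiated: positivity alone does not exclude decay at rate $\mu$ (cf.\ Corollary~\ref{posl}, where both rates occur a priori). The paper pins down the rate $\lambda$ for the constructed front via the lower bound $c_1e^{\lambda t}$ coming from the KPP--Fisher lower solution in (\ref{zer}), and only then applies Mallet-Paret's Proposition~7.2. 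Once a genuine regular super-solution is exhibited, Theorem~\ref{7} already delivers both (\ref{a1}) and (\ref{a2}), so the two paragraphs you devote to re-deriving the asymptotics --- including the $r=1$ expansion you yourself flag as the unproved ``main obstacle'' --- are not where the work of this theorem lies; the work is in Lemma~\ref{lii} and, for $c=c_\#$, in Corollary~\ref{difi}.
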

\begin{thm} \label{main2} Assume that $r \in (0,1], \ c \in [2\sqrt{1-r}, c_\#)$ and     
\begin{equation}\label{posmotrim}
f(c^2,r,b,h):= c^2(\frac{\omega_*}{8r}+ \frac h2)+ \ln \frac{c^2}{4br} - \frac{\omega_*}{2}\frac{1-r}{r} >0,
\end{equation}
where $\omega_*=8.21093\dots$ denotes the greatest positive root of the
equation $\omega = 4+ 2\ln \omega$.
Then system (\ref{1de}) has a positive monotone  front connecting $(0,1)$
with $(1,0)$.  Asymptotic formulas  (\ref{a2}) (when $c > 2\sqrt{1-r}$) and  (\ref{a1}) (when $r=1$) are fully applicable for this wavefront. 
 \end{thm}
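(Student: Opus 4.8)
\medskip
\noindent The plan is to get the front from Theorem~\ref{7}: as announced in Section~\ref{upp}, that theorem turns a \emph{regular super-solution} of (\ref{1de}) at speed $c$ connecting $(0,1)$ with $(1,0)$ into an actual positive monotone front, supplying the companion sub-solution and closing the Wu--Zou monotone iteration \cite{wz}. So the whole proof reduces to \emph{exhibiting one regular super-solution} in the window $c\in[2\sqrt{1-r},c_\#)$, $f(c^2,r,b,h)>0$. The construction has to be genuinely nonlinear: for these speeds the rate $\lambda=0.5(c-\sqrt{c^2-4(1-r)})$ is already real (even at $c=2\sqrt{1-r}$), so the linearisation at $(0,1)$ gives no obstruction, and yet --- witness the nonlinear determinacy in Proposition~\ref{mu} --- linear super-solutions are too crude.

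\medskip
\noindent\textbf{Step 1 (decoupling).} Writing $\psi=1-\theta$, the profile system in the moving variable $t$ reads $\phi''-c\phi'+\phi(1-r-\phi+r\psi(t-h))=0$, $\psi''-c\psi'+b\phi(1-\psi)=0$. I would fix $\bar\psi$ (equivalently $\bar\theta$) by over-solving the second equation --- linear in $\bar\psi$ with $\bar\phi$ as forcing --- choosing it compatibly with the bound $\bar\psi^{\,2}\le M\bar\phi$, $M=\max\{1,2b\}$, of Theorem~\ref{mda}[C]. Then the coupling term of the first equation becomes controllable in $\bar\phi$ alone, and the super-solution requirement collapses to a single scalar differential inequality of delayed Zeldovich type with $\beta=\min\{b,1/2\}$ --- the reduction advertised right after Theorem~\ref{mda}.

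\medskip
\noindent\textbf{Step 2 (ansatz, verification, threshold).} For the scalar inequality I would use a one-parameter family of profiles $\bar\phi$ reproducing the expected decay at $-\infty$: a multiple of $(-t)^{-2}$ when $r=1$ --- the leading term of (\ref{a1}), with coefficient pinned to $2c^2/b$ by balancing $-c\bar\phi'$ against the quadratic/coupling remainder --- and a multiple of $e^{\lambda t}$ when $r\in(0,1)$ (with an extra factor linear in $t$ in the critical subcase $c=2\sqrt{1-r}$, $\lambda=\sqrt{1-r}$ being then a double characteristic root), in each case glued at a free point $t=-s$ to a monotone outer piece climbing to $1$. On $(-\infty,-s]$ the principal terms cancel by the choice of power/exponent and the remainder is dominated by the negative drift once $s$ is large; on $[-s,+\infty)$ the bracket $1-r-\bar\phi+r\bar\psi(\cdot-h)$ is kept small against $-c\bar\phi'$; at $t=-s$ one imposes $C^1$-matching and the two one-sided inequalities. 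Collecting the constraints on the coefficient, on $s$, and on the shape of the outer piece, the construction closes exactly when $f(c^2,r,b,h)>0$: the delay enters through the shift in $\bar\psi(\cdot-h)$ and is the source of the factor $e^{-c^2h/2}$ in $b'$ and of the $h/2$ term in $f$, while a sharp choice of the normalised gluing parameter obeys the transcendental relation $\omega=4+2\ln\omega$, whence the constant $\omega_*=8.21\ldots$ in (\ref{posmotrim}). With the regular super-solution at hand, Theorem~\ref{7} produces the front; its behaviour at $-\infty$ --- (\ref{a2}) when $c>2\sqrt{1-r}$, (\ref{a1}) when $r=1$ --- then follows by squeezing $\phi$ between scalar linear (resp.\ degenerate) sub- and super-solutions and invoking \cite[Lemma~26, Corollary~27]{TPTv1}.

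\medskip
\noindent\textbf{Main obstacle.} The delicate part is Step~2, the explicit nonlinear super-solution valid all the way down to $c=2\sqrt{1-r}$. Three points need care: (a) the delay inside the coupling term $\bar\psi(t-h)\bar\phi(t)$ shifts the algebraic/exponential rates and is responsible for $b'=be^{-c^2h/2}$ and for the $h$-dependence of $f$, so it must be tracked, not dropped; (b) the gluing at $t=-s$, where a careless outer piece destroys the inequality --- getting the \emph{sharp} threshold forces an optimal choice of the outer piece and of $s$, and that optimisation is precisely what manufactures $\omega=4+2\ln\omega$; (c) verifying that $(\bar\phi,\bar\theta)$ meets the exact definition of a regular super-solution from Section~\ref{upp} (monotonicity, correct sidedness, admissible defect), so that Theorem~\ref{7} --- whose role here is analogous to Theorem~1(iv) of \cite{chen} --- indeed applies.
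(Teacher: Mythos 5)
Your overall strategy (exhibit a regular super-solution and feed it to Theorem \ref{7}) is exactly the paper's, but the super-solution you propose cannot serve that purpose, and the quantitative heart of the theorem is asserted rather than derived. Condition {\bf D1} of Definition \ref{defo} requires $\phi_+(t)\sim C_1(-t)^m e^{\nu t}$ with $\nu$ strictly inside $(\lambda,\mu)$; your ansatz decays at the front's own rate, $e^{\lambda t}$ for $r\in(0,1)$ (with a $t$-factor at $c=2\sqrt{1-r}$) and algebraically like $(-t)^{-2}$ for $r=1$, which is precisely the ``correct'' behavior that the paper remarks (right after Definition \ref{defo}) disqualifies a profile from being a regular super-solution. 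The restriction $\nu\in(\lambda,\mu)$ is not cosmetic: in Case I of the proof of Theorem \ref{7} the defect is controlled by $C(-t)^m e^{\nu t}(\chi(\nu,c)+o(1))$, and its negativity uses $\chi(\nu,c)<0$, which fails at $\nu=\lambda$ (and has no meaning for algebraic decay). So point (c) of your ``main obstacle'' cannot be resolved for your $\bar\phi$, and Theorem \ref{7} does not apply to it; you would be back to the older matched upper/lower-solution schemes of \cite{ma,wz} that the notion of regular super-solution was designed to avoid.

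Moreover, the step where the theorem actually lives --- why the construction closes exactly under (\ref{posmotrim}) and why $\omega_*$ solves $\omega=4+2\ln\omega$ --- is not carried out: neither the ``decoupling'' to a delayed Zeldovich inequality (Theorem \ref{mda}[C] is a property of genuine wavefronts and is not available for candidate super-solutions) nor the ``optimal gluing'' is made explicit, so the appearance of $f>0$ is reverse-engineered from the statement. The paper's route in Section \ref{4.3a} is quite different and concrete: keep $\phi_+(t)=e^{\nu t}$ globally with $\nu$ close to $c/2$ (not $\lambda$), and modify only $\psi_+$, taking $De^{\nu t}$, $D=be^{-\nu ch}/(c\nu-\nu^2)$, for $t\le t_*$ and continuing it by the common tangent line $p+qt$ of the graphs of $De^{\nu t}$ and $\gamma(t)=r^{-1}(c\nu-\nu^2+r-1+e^{\nu t})$; the first inequality of {\bf D2} is exactly $\psi_+<\gamma$, which the tangent line secures, and the remaining inequality on $[t_*,+\infty)$ reduces to $(1-p)\nu<q\ln(ec\nu/b')$, which as $\nu\to c/2$ becomes $\omega<4+2\ln\omega$ with $\omega=\frac{2r}{c^2/4+r-1}\ln\frac{4b'r}{c^2}$; since $\omega>2$, this is equivalent to $\omega<\omega_*$, i.e.\ to (\ref{posmotrim}), and the delay enters only through $b'=be^{-\nu ch}$. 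If you want to salvage your outline, replace your ``front-like'' inner profiles by $\nu$-exponentials with $\nu$ near $c/2$ and perform the tangent-line optimisation explicitly; the asymptotics (\ref{a1})--(\ref{a2}) then come for free from Theorem \ref{7}, not from a separate squeezing argument.
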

Observe that inequality (\ref{posmotrim}) can be written as $c> c_\circ=c_\circ(r,b,h)$ where $c_\circ$ is the unique positive root of the equation $f(c^2,r,b,h)=0$ considered with fixed $r,b,h$. 

\subsection{Main results: bistable case}\label{bss13}
The next assertion can be considered as a dual to  Theorems \ref{main1},  \ref{main2}. Indeed, it essentially amounts to the non-existence  of bistable waves for $c > c_\#(r,b,h)$ and $c > c_\circ(r,b,h)$: 
\begin{thm}\label{mumi>2} Let $r >1, \ b >0$.  Then system  (\ref{1de}) has at most  one (a unique, if $h=0$) positive monotone wavefront $(u,v)=(\phi, \theta)$ $(\nu\cdot x + c_\star t),$  $  \  \phi>0, |\nu|=1,$ connecting $(0,1)$ with $(1,0)$.  The (unique) velocity of propagation  $c_\star$ satisfies the inequality $c_\star(r,b,h) \leq \min\{c_\#(r,b,h),c_\circ(r,b,h)\}$. 
In addition, $c_\star(r,b,h)$ is non-increasing in $h$.  
\end{thm}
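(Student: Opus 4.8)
The plan is to reduce every claim to results already at our disposal. For the uniqueness of the speed, I would argue by contradiction: suppose system (\ref{1de}) with $r>1$ admitted monotone wavefronts at two distinct speeds $c_1<c_2$. After the standard linear change making the system monotone in the Wu--Zou sense, and recalling from Proposition \ref{mude} that both fronts are strictly monotone with $\phi,\theta$ valued in $(0,1)$, I would normalize translations and then run the Berestycki--Nirenberg sliding argument (exactly as for Theorem \ref{mumi>1}, but now comparing two fronts of different speeds rather than the same speed). The key analytic input is the asymptotic behaviour at $\pm\infty$: near the non-degenerate equilibrium $(1,0)$ the linearization at $r>1$ has a genuine spectral gap (this is where $r\neq1$ is essential, in contrast with the degenerate $r=1$ case), and near $(0,1)$ — which is the degenerate end — the relevant asymptotics are controlled by the expansions underlying (\ref{a1})--(\ref{a2}) and Theorem \ref{mda}[B]. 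Using these one shows the slower front, suitably translated, can be slid to lie strictly above the faster one, contradicting the parabolic maximum principle applied to the difference; hence $c_1=c_2=:c_\star$. Once the speed is unique, uniqueness of the profile up to translation for $h=0$ follows from Theorem \ref{mumi>1} verbatim; for $h\geq0$ the same sliding argument gives "at most one."

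For the upper bound $c_\star\leq\min\{c_\#,c_\circ\}$, I would invoke the dual role of Theorems \ref{main1} and \ref{main2}. The point is that those theorems construct, for every $c\geq c_\#$ (respectively every $c\in[2\sqrt{1-r},c_\#)$ satisfying (\ref{posmotrim}), i.e. $c>c_\circ$) and every $r\in(0,1]$, a monotone front of the BZ system via a regular super-solution. The very same regular super-solutions — being defined by explicit formulas that depend on $r,b,h$ only through the coefficients, and whose defining differential inequalities are preserved, or even strengthened, when $r$ is increased past $1$ — remain valid regular super-solutions for the system with $r>1$. Concretely, I would take the super-solution used in the proof of Theorem \ref{main1} (resp. \ref{main2}) at parameter value $r=1$, observe that increasing the reaction parameter to $r>1$ only makes the nonlinearity $u(1-u-rv)$ more negative, so the super-solution inequality in Theorem \ref{7} still holds, and thus Theorem \ref{7} produces a monotone wavefront for the $r>1$ system at that speed. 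By the uniqueness just established, any such admissible speed must equal $c_\star$; taking $c$ down to $c_\#(r,b,h)$ and to $c_\circ(r,b,h)$ gives $c_\star\leq c_\#$ and $c_\star\leq c_\circ$ respectively. (One must check the admissible-speed intervals are non-empty and that $c_\#,c_\circ$ are themselves attained or approached; this is where a short limiting/continuity argument in $c$ enters.)

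Finally, monotonicity of $c_\star$ in $h$: I would show that a monotone wavefront at speed $c$ for delay $h_1$ produces a (super-solution giving a) monotone wavefront at the same speed $c$ for every delay $h_2\leq h_1$. The mechanism is that in the BZ system the delay enters only through the term $-ru(t,x)v(t-h,x)$ with $v$ decreasing along the front, so $v(t-h,x)$ is decreasing in $h$; hence decreasing the delay makes $-ruv(t-h)$ more negative, again preserving the super-solution inequality. Therefore the set of admissible speeds for delay $h_2$ contains that for $h_1$, and since by the uniqueness part each such set is the single point $\{c_\star(r,b,h)\}$ (together with a half-line above it, consistent with Theorems \ref{main1}--\ref{main2}'s dual reading), we get $c_\star(r,b,h_2)\leq c_\star(r,b,h_1)$.

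I expect the main obstacle to be the sliding argument at the \emph{degenerate} end $(0,1)$: the absence of a spectral gap there means the usual "strong comparison forces strict ordering" step can fail, and one has to feed in the precise two-term (indeed, $\ln(-t)/t^k$-corrected) asymptotics — the same ones listed in (\ref{a1})--(\ref{a2}) and exploited via Theorem \ref{mda}[B] — to guarantee that the translated slower front is genuinely above the faster one in a neighbourhood of $-\infty$ before the interior maximum principle can be applied. Verifying that the regular super-solutions of Theorems \ref{main1}--\ref{main2} really do carry over unchanged to $r>1$ is conceptually easy but requires care in re-examining each defining inequality in Section \ref{upp}.
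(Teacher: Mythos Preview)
Your proposal for the speed-uniqueness part (Claim I) is essentially the paper's approach, but your citation of the asymptotics is misdirected: for $r>1$ the relevant result at the left end is Lemma \ref{pred>1}, which gives clean exponential behaviour $\phi\sim e^{\mu(c)t}$, $\psi\sim Ae^{ct}$ with $\mu(c)>c$. Formulas (\ref{a1})--(\ref{a2}) are for $r\leq 1$ and are irrelevant here, and the ``degenerate end'' difficulty you anticipate does not arise --- on the contrary, for $r>1$ both ends have exponential asymptotics with distinct rates for distinct speeds, which is exactly what makes the sliding work. (The key strict inequality comes from the term $(c_2-c_1)\psi_2'>0$ or $(c_2-c_1)\phi_2'>0$, not from any $\ln(-t)/t^k$ correction.)

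For the upper bound $c_\star\leq\min\{c_\#,c_\circ\}$ there is a genuine gap. You propose to invoke Theorem \ref{7} for $r>1$, but Theorem \ref{7} is stated and proved only for $r\in(0,1]$, and the obstruction is not the super-solution but the \emph{lower} solution: the proof manufactures its lower solution from the KPP--Fisher front of $\phi''-c\phi'+\phi(1-r-\phi)=0$ when $r<1$ (which has no positive front when $1-r<0$) or from the special $\phi_{T_n},\psi_{Q_n}$ construction when $r=1$. Neither is available for $r>1$, so ``Theorem \ref{7} produces a monotone wavefront for the $r>1$ system'' fails as written. Moreover, your logical conclusion is off: if you \emph{could} build fronts for every $c>\min\{c_\#,c_\circ\}$ you would contradict speed-uniqueness outright, not conclude $c_\star\leq c_\#$. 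The paper's idea (Claim II) sidesteps both issues: assuming $c_\star>\min\{c_\#,c_\circ\}$, pick $c'\in(\min\{c_\#,c_\circ\},c_\star)$; then the \emph{existing bistable front} $(\phi_*,\psi_*)$ at speed $c_\star$ is itself a strict lower solution at speed $c'$ (since $(c_\star-c')\phi_*'>0$ and $(c_\star-c')\psi_*'>0$), while the explicit super-solutions of Subsections \ref{4.2}--\ref{4.3a} (which are valid for all $r>0$, cf.\ Lemma \ref{lii}) serve as the upper solution. Iterating as in (\ref{pfoin}) yields a front at speed $c'$, contradicting Claim I.

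For the monotonicity in $h$ your mechanism is in the right spirit but incomplete: you only produce a super-solution when the delay is decreased. The paper (Claim III) argues by contradiction and uses \emph{both} existing fronts --- the $h_1$-front becomes a lower solution and the $h_2$-front an upper solution at an intermediate speed $c$ and delay $h_1$ --- then iterates to a front at $(c,h_1)$ with $c\neq c_\star(h_1)$, again contradicting Claim I.
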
 
The wave existence problem for the bistable  BZ delayed system requires a different approach and it
is not considered here.  In the non-delayed case, the wavefront existence
was established by Kanel in \cite[Theorem 4]{Ka2}.  
In view of Theorem \ref{mumi>2}, Kanel's result  can be reformulated as 
\begin{proposition} \label{mu>1}Let $h=0, r >1$. Then  system (\ref{1}) has a positive monotone wavefront for the 
speed $c_\star$ such that 
$
c_K:=b/(2\sqrt{(r+b)\left[\min(1,b)(r+b)-0.5b\right]}) \leq c_\star < 2 \sqrt{\min(1,b)}.
$
\end{proposition}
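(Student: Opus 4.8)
\emph{Proof proposal.} Proposition \ref{mu>1} is essentially an assembly statement: the genuinely hard content---uniqueness of bistable fronts and a sharp upper bound for their speed---is packaged into Theorem \ref{mumi>2}, so the plan here is merely to combine that theorem with an existence result from the literature, adding a one-line computation for the strict upper bound.

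First, I would invoke Kanel's construction in \cite[Theorem 4]{Ka2} for the non-delayed bistable regime $h=0$, $r>1$: by a sub- and super-solution argument it produces at least one positive wavefront $(u,v)=(\phi,\theta)(\nu\cdot x+ct)$ joining $(0,1)$ to $(1,0)$ whose speed $c$ satisfies $c_K\le c$, where $c_K=b/(2\sqrt{(r+b)[\min(1,b)(r+b)-0.5b]})$; such a front is automatically componentwise monotone by \cite[Section 8]{Mur1} (equivalently, by Proposition \ref{mude} with $h=0$). The only point requiring care here is bookkeeping: one must verify that Kanel's standing hypotheses on his coefficients reduce precisely to $r>1$, $b>0$, and that his normalization of the rear equilibrium matches ours, in which the left limit $a$ of the $v$-component has been rescaled to $1$. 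This translation is, in my view, the sole mildly delicate part of the argument.

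Next, I would apply Theorem \ref{mumi>2} with $h=0$. It asserts that, for $r>1$, system (\ref{1de})---which for $h=0$ coincides with (\ref{1})---has a \emph{unique} positive monotone wavefront connecting $(0,1)$ with $(1,0)$. Hence the speed of Kanel's front is uniquely determined; call it $c_\star$. Combining with the previous paragraph gives $c_K\le c_\star$, while Theorem \ref{mumi>2} simultaneously provides $c_\star\le\min\{c_\#(r,b,0),c_\circ(r,b,0)\}$.

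It remains to convert $c_\star\le c_\#(r,b,0)$ into the strict bound $c_\star<2\sqrt{\min(1,b)}$. Since $r>1$ we have $rb\exp(-2h(1-r))+r>1$ for every $h\ge0$, so formula (\ref{mins}) with $h=0$ reads $c_\#(r,b,0)=2\sqrt{b}/\sqrt{1+b}$. A trivial estimate closes the proof: if $b\le1$ then $2\sqrt{b}/\sqrt{1+b}=2\sqrt{\min(1,b)}/\sqrt{1+b}<2\sqrt{\min(1,b)}$, and if $b>1$ then $2\sqrt{b}/\sqrt{1+b}<2=2\sqrt{\min(1,b)}$; either way $c_K\le c_\star\le c_\#(r,b,0)<2\sqrt{\min(1,b)}$. (The strict upper bound is, in fact, also already present in \cite[Theorem 4]{Ka2}.) Beyond the notational reconciliation with \cite{Ka2} noted above, I foresee no obstacle.
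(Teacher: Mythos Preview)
Your proposal is correct and matches the paper's approach: the paper presents Proposition~\ref{mu>1} simply as Kanel's existence result \cite[Theorem~4]{Ka2} reformulated in light of the uniqueness supplied by Theorem~\ref{mumi>2}, exactly as you do. Your extra step of deriving the strict upper bound $2\sqrt{\min(1,b)}$ from the inequality $c_\star\le c_\#(r,b,0)=2\sqrt{b}/\sqrt{1+b}$ is a harmless bonus (and, as you note, already contained in Kanel's theorem).
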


\vspace{-5mm}

\begin{table}[h]
\caption{Analytical and numerical estimations of
$c_*, c_\star$}
\begin{center} \footnotesize
\begin{tabular}{|c|c|c|c|c|c|c|}
  \hline
  $(r;b)$ & Propositions \ref{ka}, \ref{mu>1} & Theorems \ref{main1}, \ref{mumi>2} & Theorems \ref{main2}, \ref{mumi>2}  & Numerical $c_*$ & Propositions \ref{mu},  \ref{mu>1} \\
  \hline
  $(0.5;5)$ & $c\geq 2$ & $c>1.82\dots$ & $c>1.62\dots $ & $c_*\approx1.46\dots $ & $c_* \geq c_l=1.414\dots $  \\
  $(0.5;10)$ & $c\geq 2$ & $c > 1.90\dots$ & $c > 1.71\dots$ & $c_*\approx 1.50\dots$ & $c_* \geq c_l=1.414\dots$  \\
  $(1;5)$ & $c\geq 2$ & $c > 1.82\dots$ & $c > 1.47\dots$ & $c_*\approx 1.13\dots$ & $c_* > c_l=0.289\dots$ \\
  $(5;0.5)$ & $c_\star \leq 1.41\dots$ & $c_\star \leq c_\#=1.15\dots $ & $c_\star \leq c_\circ=0.59\dots$ & $c_\star \approx 0.12\dots$ & $c_K= 0.067\dots, c_l=0.007\dots$ \\
  \hline
\end{tabular}\end{center} 
\label{tab1} 
\end{table}

\vspace{-3mm}

\noindent {\bf Example} In Table 1, for $h=0$, we compare results  of Theorems
\ref{main1}, \ref{main2}, \ref{mumi>2} with previously known  ones
(Propositions  \ref{ka}, \ref{mu>1}). Notation like
$c>1.82\dots$ means  that system (\ref{1}) has a positive
front for each velocity $c>1.82\dots$. Numerical
estimations of the minimal speed $c_*$ are taken from \cite[Table
3]{MM} and \cite[Table 1]{Quin}. Lower bounds for $c_*, c_\star$ are computed
from  Propositions \ref{mu}, \ref{mu>1}. 

\vspace{+2mm}

Finally, the organization of the paper is as follows.  Sections  \ref{OR2},  \ref{last}, \ref{4.2}, \ref{4.3a} and  \ref{5th} contain the proofs of Theorem  \ref{mda}, \ref{mumi>1}, \ref{main1}, \ref{main2} and \ref{mumi>2}, respectively. Asymptotic behavior of profiles at infinity is analyzed in  Section \ref{etomumi}.  Our main technical  result (Theorem \ref{7}) is proved in Section \ref{upp}.

\section{Proof of Theorem \ref{mda}} \label{OR2}
Let  $(u,v) = (\phi, \theta)(\nu\cdot x + ct)$ be a wavefront to  (\ref{1de}).  After introducing  
$\psi(t)= 1-  \theta(t-ch)$, we obtain the following boundary value problem for the determination of fronts in the BZ system:
 \begin{equation}\label{3ade}\left\{
\begin{array}{ll}
      \phi''(t) - c\phi'(t) + \phi(t) (1 - r- \phi(t)+ r\psi(t)) =0,
    &    \\
     \psi''(t) - c\psi'(t) +b \phi(t-ch)(1-\psi(t)) =0, & \\      \phi>0, \psi <1, \      \phi(-\infty)=\psi(-\infty)=0, \  \phi(+\infty)=\psi(+\infty)=1.  &\\ 
\end{array}%
\right.
\end{equation}
 [{\bf A}] 
Set $z(t):=K\phi(t)- \psi(t)$. 
It is easy to see that
$$
z''(t)-cz'(t)
+\left\{K(1-r)\phi(t) -b\phi(t-ch)+ K\phi(t)(r\psi(t)-\phi(t)) + b\psi(t)\phi(t-ch)\right\}=0.
$$
Since $z(-\infty)=0,\  z(+\infty)=K -1 \geq 0,$
the non-positivity of $z$ at some points  implies the existence of some $\tau$
such that $z(\tau)\leq 0, \ z'(\tau)= 0, \
z''(\tau)\geq 0$. But  $z(\tau)\leq 0$ implies
$
K\phi(\tau)\leq  \psi(\tau)
$
and therefore 
\begin{eqnarray*}
0&=& z''(\tau)
+\left\{K(1-r)\phi(\tau)+ K\phi(\tau)(r\psi(\tau)-\phi(\tau)) + b(\psi(\tau)-1)\phi(\tau-ch)\right\}\\
& > &
\left\{K(1-r)\phi(\tau)+ K\phi(\tau)(r\psi(\tau)-\phi(\tau)) + b(\psi(\tau)-1)\phi(\tau)\right\}\\
&\geq & 
\phi(\tau)\left\{[K(1-r) -b]+ K\phi(\tau)(rK-1+b)\right\}\geq 0,
\end{eqnarray*}
 a contradiction. The latter inequality holds obviously if  $rK-1+b \geq 0$. If 
  $rK-1+b < 0$, then 
$$
K(1-r) -b+ K\phi(\tau)(rK-1+b) >   K(1-r) -b+ K(rK-1+b)= 
(rK+b)(K-1) \geq 0.
$$
Next, set $z(t):=L\phi(t-ch)- \psi(t)$.  
We have  $z(-\infty)=0,\  z(+\infty)= L -1 \leq 0,$
so that the non-negativity of $z$ at some points would imply the existence of some
$\tau$ such that $z(\tau)\geq  0, \ z'(\tau)= 0, $ $
z''(\tau)\leq 0$. But then $
L\phi(\tau-ch)\geq  \psi(\tau)
$ 
and therefore 
\begin{eqnarray*}
0&=& z''(\tau) + (L(1-r)-b)\phi(\tau -ch) 
+L\phi(\tau-ch)(r\psi(\tau-ch)-\phi(\tau-ch))+ b \phi(\tau-ch)\psi(\tau)\\
& < &
(L(1-r)-b)\phi(\tau -ch) +
L\phi(\tau-ch)(r\psi(\tau)-\phi(\tau-ch))+ b \phi(\tau-ch)\psi(\tau)\\
&\leq & 
L_*:= (L(1-r)-b)\phi(\tau -ch) +
L\phi^2(\tau-ch)(rL-1+ b)\leq 0, 
\end{eqnarray*}
a contradiction. The latter inequality is obvious if $rL-1+b \leq 0$. If 
$rL-1+b > 0$ then 
$$L_* \leq \phi(\tau -ch) ((L(1-r)-b)+
L(rL-1+ b))=  \phi(\tau -ch) (rL+b)(L-1) \leq 0. 
$$

[{\bf B}]  Consider $z(t):= \psi(t) -\phi(t)$.  We have that $z(\pm\infty) =0$, 
\begin{equation}\label{tush}
 z''(t) -cz'(t) + b\phi(t-ch)(1-\psi(t)) - \phi(t)(1-r-\phi(t)+r\psi(t))=0, \quad t \in \R. 
\end{equation}
If $z(s) \leq 0$  at some $s$ then there exists $\tau$ such that $0\geq z(\tau) = \min_{t \in \R} z(t)$. We have 
 that $z''(\tau) \geq 0, $ $ \ z'(\tau) =0, \ \psi(\tau) \leq \phi(\tau),$ and 
 $$
- \phi(\tau)(1-r-\phi(\tau)+r\psi(\tau)) \geq - \phi(\tau)(1-r-\phi(\tau)+r\phi(\tau)) = \phi(\tau)(r-1)(1-\phi(\tau)) \geq 0, 
$$
contradicting to (\ref{tush}).

[{\bf C}]  Consider $z(t): = M\phi(t) - \psi^2(t)$. Since $M\geq 1$, we have $z(-\infty)=0,\  z(+\infty)=M -1 \geq 0$.  Thus
the non-positivity of $z$  implies that $z(\tau)\leq 0, \ z'(\tau)= 0, \
z''(\tau)\geq 0$ for some $\tau \in \R$. Hence,
$$
M\phi(\tau) \leq \psi^2(\tau), \quad M\phi'(\tau) = 2\psi(\tau)\psi'(\tau), \quad 
M\phi''(\tau) \geq 2\psi(\tau)\psi''(\tau)+ 2(\psi'(\tau))^2, 
$$
$$
\begin{array}{ll}
     0\geq  2\psi(\tau)\psi''(\tau)+ 2(\psi'(\tau))^2- 2c\psi(\tau)\psi'(\tau) + M\phi(\tau) (1-r +r\psi(\tau)- \phi(\tau)),
    &    \\
     0=2\psi(\tau)\psi''(\tau) - 2c\psi(\tau)\psi'(\tau) +2b\psi(\tau)\phi(\tau-ch)(1-\psi(\tau)),  & \\      
\end{array}%
$$
so that
\begin{eqnarray*}
0&\geq &  2(\psi'(\tau))^2 + M\phi(\tau) (1-r +r\psi(\tau)- \phi(\tau)) - 
2b\psi(\tau)\phi(\tau-ch)(1-\psi(\tau)) \\
& > &  M\phi(\tau) (1-r +r\psi(\tau)- \phi(\tau)) - 
2b\psi(\tau)\phi(\tau)(1-\psi(\tau))\\
& \geq &  \phi(\tau)\left\{M(1-r) +  \psi(\tau)(Mr-2b) +\psi^2(\tau)(2b-1)\right\}\geq 0,
\end{eqnarray*}
a contradiction. 
Here we observe that the polynomial $p(z):= M(1-r) +  z(Mr-2b) +z^2(2b-1), $ $z := \psi(\tau) \in (0,1), $ satisfies 
$
p(0)= M(1-r) \geq 0, \ p(1) = M-1 \geq 0, 
$
so that $p(\psi(\tau)) \geq 0$   if $2b -1 \leq 0$. If $2b-1 >0$
then we choose  $M:= 2b >1$ to obtain  
$$
 p(z) = 2b(1-r) - 2bz(1-r) +z^2(2b-1)= 2b(1-r)(1-z) +z^2(2b-1) >0. 
$$
\section{Asymptotics of wavefront profiles} \label{etomumi}
First, we observe that the derivatives $\phi', \psi'$ of  wavefront components are  bounded and uniformly continuous on ${\mathbf R}$ so that   $\phi'(\pm\infty)=\psi'(\pm\infty) =0$.  This fact is well known (cf. \cite[Section 2]{wz}) and its proof is omitted.   Incidentally, the relation $\psi'(\pm\infty) =0$ implies the positivity of each admissible speed (i.e. $c>0$):  it suffices to integrate  the second equation of  (\ref{3ade}) on ${\mathbf R}$ .  

Next,  assume that $r\in (0,1]$. 
Using Theorem \ref{mda}[B] if $r=1$ and 
integrating 
(\ref{3ade}) on $(-\infty,t],\ t \leq t_a$, we get, for sufficiently large negative $t_a$, 
$$
\phi'(t) < \phi'(t) + \int_{-\infty}^t\phi(s)(1-r +r\psi(s) - \phi(s))ds = c\phi(t),\  \psi'(t) + b\int_{-\infty}^t\phi(s-ch)(1-\psi(s))ds = c\psi(t),
$$
and therefore  $z(t):= \phi'(t)/\phi(t) <c, \ t \leq t_a$,  $\phi \in L_1({\mathbf R}_-)$.  Furthermore, $z$ satisfies the equation
\begin{equation} \label{prodok}
z' +z^2 -cz +(1-r) = f(t), \ {\rm where} \ f(t):= \phi(t)-r\psi(t).
\end{equation}
Let $\lambda = \lambda(c) \leq 
\mu= \mu(c)$ denote the roots of the characteristic equation
$x^2 -cx + (1-r)=0.
$ 
\begin{lemma} \label{pred} Let $(\phi, \psi)$ be a traveling front of (\ref{3ade}) and $r \in (0,1]$. Then 
(a) $c \geq 2\sqrt{1-r}$, (b) there exists finite limit $\lim \phi'(t)/\phi(t) \in \{\lambda, \mu\}$ as 
$t \to -\infty$.  \end{lemma}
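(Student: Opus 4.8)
Here is the proof plan. The idea is to treat $z:=\phi'/\phi$ as a bounded scalar solution of the Riccati equation (\ref{prodok}), with $f$ a perturbation that vanishes at $-\infty$, and to run a phase–line argument. I first collect the facts I will use. Since $\phi>0$ on $\mathbf R$, the function $z$ is defined and $C^{1}$ on all of $\mathbf R$ and satisfies (\ref{prodok}) everywhere; by Proposition \ref{mude} one has $z(t)=\phi'(t)/\phi(t)>0$, which together with the bound $z(t)<c$ for $t\le t_a$ obtained above gives $0<z(t)<c$ on $(-\infty,t_a]$; moreover $f(t)=\phi(t)-r\psi(t)\to 0$ as $t\to-\infty$, because $\phi(-\infty)=\psi(-\infty)=0$. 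Setting $F(x):=-(x-\lambda)(x-\mu)=-(x^{2}-cx+1-r)$, equation (\ref{prodok}) becomes $z'(t)=F(z(t))+f(t)$, and $\max_{x\in\mathbf R}F(x)=c^{2}/4-(1-r)$.

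For part (a), suppose to the contrary that $c<2\sqrt{1-r}$; then $F(x)\le-\delta$ for every $x$, with $\delta:=(1-r)-c^{2}/4>0$. Choose $t_b\le t_a$ with $|f(t)|\le\delta/2$ for $t\le t_b$. Then $z'(t)\le-\delta/2$ for all $t\le t_b$, so $z(t)\ge z(t_b)+\frac{\delta}{2}(t_b-t)\to+\infty$ as $t\to-\infty$, contradicting $z(t)<c$. Hence $c\ge 2\sqrt{1-r}$, and the characteristic roots $\lambda\le\mu$ are real (with $\lambda\ge 0$, since $\lambda\mu=1-r\ge 0$ and $\lambda+\mu=c>0$).

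For part (b) it suffices to prove the following standard fact and apply it with $x=z$, $g=f$, $T_{0}=t_a$: \emph{if $x$ is bounded on $(-\infty,T_{0}]$ and solves $x'=F(x)+g(t)$ there, where $F$ is continuous with finite (hence discrete) zero set $Z=F^{-1}(0)$ and $g(t)\to 0$ as $t\to-\infty$, then $\lim_{t\to-\infty}x(t)$ exists and lies in $Z$.} Here $Z=\{\lambda,\mu\}$ — two distinct points if $c\neq 2\sqrt{1-r}$, a single (double) one $\lambda=\mu=c/2$ in the critical case, which causes no difficulty. To prove the fact, put $L:=\liminf_{t\to-\infty}x(t)\le S:=\limsup_{t\to-\infty}x(t)$, both finite, and assume $L<S$. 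The interval $(L,S)$ is infinite, hence not contained in the finite set $Z$, so we may fix $c^{\ast}\in(L,S)\setminus Z$ and then $\varepsilon>0$ so small that the band $B:=[c^{\ast}-\varepsilon,c^{\ast}+\varepsilon]\subset(L,S)$ misses $Z$; on $B$ the sign of $F$ is constant and $\min_{x\in B}|F(x)|\ge\delta_0>0$. Pick $t_c\le T_{0}$ with $|g(t)|<\delta_0/2$ for $t\le t_c$; then $x'(t)$ keeps that same fixed nonzero sign whenever $t\le t_c$ and $x(t)\in B$. Since $c^{\ast}-\varepsilon>L$ and $c^{\ast}+\varepsilon<S$, there are arbitrarily negative times at which $x$ lies strictly below $B$ and arbitrarily negative times at which it lies strictly above $B$; interlacing such times produces $t_c\ge s_1>s_2>s_3>\cdots\to-\infty$ with $x(s_k)$ alternately above and below $B$. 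Reading in the direction of increasing $t$, between one consecutive pair $x$ crosses $B$ upward and between another it crosses $B$ downward; choose the crossing opposite to the forced sign. For instance, if $F>0$ on $B$, take a consecutive pair with $s_{k+1}<s_k$, $x(s_{k+1})>c^{\ast}+\varepsilon$, $x(s_k)<c^{\ast}-\varepsilon$, and set $\tau:=\sup\{t\in[s_{k+1},s_k]:x(t)\ge c^{\ast}+\varepsilon\}$, $\sigma:=\inf\{t\in[\tau,s_k]:x(t)\le c^{\ast}-\varepsilon\}$; then $\tau<\sigma\le t_c$, $x(\tau)\ge c^{\ast}+\varepsilon>c^{\ast}-\varepsilon\ge x(\sigma)$ and $x(t)\in(c^{\ast}-\varepsilon,c^{\ast}+\varepsilon)$ for $t\in(\tau,\sigma)$, so $x$ is strictly increasing on $[\tau,\sigma]$ — a contradiction (the case $F<0$ on $B$ is symmetric). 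Hence $L=S$, so $x(t)\to L$ as $t\to-\infty$; finally $x'(t)=F(x(t))+g(t)\to F(L)$, and a nonzero limit of $x'$ would force $|x|\to\infty$, so $F(L)=0$, i.e. $L\in Z$. Applied to $x=z$ this yields $\lim_{t\to-\infty}\phi'(t)/\phi(t)\in\{\lambda,\mu\}$.

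I expect the only delicate point to be the crossing step in part (b): one must keep the time orientation straight (``earlier'' meaning more negative) and choose the level-set endpoints $\tau,\sigma$ so as to trap $x$ inside $B$ on an interval realizing the boundary values $c^{\ast}\pm\varepsilon$ in the order incompatible with strict monotonicity. Part (a) is routine, the only thing to verify being $f(t)\to 0$ — immediate from the boundary conditions, with the role of Theorem \ref{mda}[B] (needed when $r=1$) entering only through the already established bound $z<c$. The ``fact'' used in (b) is an elementary instance of convergence for asymptotically autonomous scalar ODEs with isolated equilibria and could equally well be quoted from the literature; I have sketched the short self-contained argument instead.
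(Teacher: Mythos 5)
Your proposal is correct and follows essentially the same route as the paper: it reduces to the Riccati equation (\ref{prodok}) for $z=\phi'/\phi$ with $0<z<c$ and $f\to0$, and carries out the phase-line (direction field) analysis that the paper only sketches. The only difference is presentational — you package part (b) as a general convergence fact for asymptotically autonomous scalar equations proved by a $\liminf$/$\limsup$ band-crossing argument, which supplies the details the paper leaves as ``a simple analysis of the direction field.''
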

\begin{proof} Recall that $f(t) \to 0$ as $t \to -\infty$. (a) Suppose that  $c < 2\sqrt{1-r}$.  Then, for some $t_b \leq t_a$, it holds  $f(t) -z^2+cz - (1-r) <0$ for $(t,z) \in (-\infty, t_b] \times [0,2c]$.  However, as a simple analysis of the direction field for equation (\ref{prodok}) shows,  this contradicts to the property $z(t) \in (0,c), \ t \leq t_a$. (b1)  Let $c = 2\sqrt{1-r}$ and take some small $\epsilon >0$. By analyzing the direction field again,  we can see that there exists $t_c$ such that 
$(t,z(t)) \in (-\infty,t_c] \times (-\epsilon +c/2, c/2+\epsilon)$ for $t \leq t_c$.  Hence, $z(t) \to c/2= \lambda=\mu$ as $t \to -\infty$. 
(b2) The situation when  $c > 2\sqrt{1-r}$ is similar to (b1). \hfill $\square$
\end{proof}
\begin{corollary} \label{posl} Let $r \in (0,1)$. Then there are  $t_1$, $m \in \{0,1\}$ and $\nu(c) \in \{\lambda(c), \mu (c)\}$ such  that   $(\psi(t+t_1), \phi(t+t_1),\phi'(t+t_1))= 
     (-t)^me^{\nu(c) t}(be^{-\nu(c)ch}/(1-r), 1, \nu(c))(1+o(1)), \  t \to -\infty.$ 
\end{corollary}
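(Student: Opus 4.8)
The plan is to promote the convergence $z(t):=\phi'(t)/\phi(t)\to\nu:=\nu(c)\in\{\lambda(c),\mu(c)\}$ of Lemma \ref{pred} to a full polynomial--exponential description of $(\phi,\psi,\phi')$ near $-\infty$. Put $y(t):=z(t)-\nu\to 0$, so that (\ref{prodok}) becomes $y'=f(t)-(z-\lambda)(z-\mu)$ with $f(t)=\phi(t)-r\psi(t)$. Since $r<1$ we have $\lambda\mu=1-r>0$ and $\lambda+\mu=c>0$, hence $0<\lambda\le\mu<c$ and, crucially, $\nu^2-c\nu=-(1-r)\ne 0$. Because $\phi$ is monotone (Proposition \ref{mude}) we have $z(t)\in(0,c)$, and together with $z(t)\to\nu$ this yields, for every small $\epsilon>0$, a two-sided bound $c_1e^{(\nu+\epsilon)t}\le\phi(t)\le c_2e^{(\nu-\epsilon)t}$ near $-\infty$. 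Combining this with Theorem \ref{mda}[A] (which gives $L\phi(\cdot-ch)<\psi<K\phi$ for $K=\max\{1,b/(1-r)\}$, $L=\min\{1,b/(1-r)\}$) and $\phi(t-ch)\ge e^{-c^2h}\phi(t)$, we get $\psi\asymp\phi$ and $|f(t)|=O(\phi(t))=O(e^{(\nu-\epsilon)t})$, so $f\in L^1(-\infty)$. Finally $\phi'(t)=z(t)\phi(t)=\nu\phi(t)(1+o(1))$ and $\phi(t)=(\mathrm{const})\,e^{\nu t}\exp\big(\int_{t_a}^t y(s)\,ds\big)$, so the whole statement reduces to analysing $y$.

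Suppose first $c>2\sqrt{1-r}$, so $\lambda<\mu$. Then the Riccati equation is hyperbolic, $y'\mp(\mu-\lambda)y=f(t)-y^2$ (sign $-$ if $\nu=\mu$, $+$ if $\nu=\lambda$), and since $f,y^2\to 0$ a standard fixed-point / backward Gronwall estimate on the equivalent integral equation gives $|y(t)|=O(\sup_{s\le t}|f(s)|)=O(e^{(\nu-\epsilon)t})$; in particular $y\in L^1(-\infty)$, so $\int_{t_a}^t y(s)\,ds$ converges as $t\to-\infty$ and $\phi(t)=a_0e^{\nu t}(1+o(1))$ with $a_0>0$ and $m=0$. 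Suppose now $c=2\sqrt{1-r}$, so $\lambda=\mu=\nu=c/2$; the equation collapses to the scalar Riccati $y'=f(t)-y^2$. Here $y(t)\to 0$ first forces $y\in L^2(-\infty)$ (otherwise $\int_{t_a}^t y^2\to-\infty$ and then $y(t)\to+\infty$), whence $y(t)=\int_{-\infty}^t(f(s)-y(s)^2)\,ds$, and a careful comparison analysis of this scalar equation — of the same nature as the critical-speed asymptotics for the KPP--Fisher and Zeldovich equations, cf. \cite[Lemma 26, Corollary 27]{TPTv1} — shows that either $y\in L^1(-\infty)$ (giving $m=0$) or $y(t)\sim 1/t$, i.e. $\int_{t_a}^t y(s)\,ds\sim\ln(-t)$ (giving $m=1$). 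In all cases $\phi(t)=a_0(-t)^m e^{\nu t}(1+o(1))$ with $a_0>0$ and $m\in\{0,1\}$, positivity of $\phi$ fixing the sign of $a_0$.

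With $\phi(t)=a_0(-t)^m e^{\nu t}(1+o(1))$ in hand, insert it into the second line of (\ref{3ade}), rewritten as $\psi''-c\psi'=-b\phi(\cdot-ch)(1-\psi)$. Since $\psi(t)\to 0$ and $\phi(t-ch)=e^{-\nu ch}a_0(-t)^m e^{\nu t}(1+o(1))$, the forcing equals $-be^{-\nu ch}a_0(-t)^m e^{\nu t}(1+o(1))$, and because $\big(\tfrac{d^2}{dt^2}-c\tfrac{d}{dt}\big)\big[(-t)^m e^{\nu t}\big]=(\nu^2-c\nu)(-t)^m e^{\nu t}(1+o(1))$ with $\nu^2-c\nu=-(1-r)\ne 0$, variation of parameters (the homogeneous solutions $1$ and $e^{ct}$ being eliminated by $\psi(-\infty)=0$ and by $c>\nu$, respectively) gives $\psi(t)=\frac{be^{-\nu ch}}{1-r}\,\phi(t)(1+o(1))$. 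Finally, since $a_0>0$ and $\nu>0$, pick $t_1$ with $a_0e^{\nu t_1}=1$; because $(-(t+t_1))^m=(-t)^m(1+o(1))$ as $t\to-\infty$, translating by $t_1$ turns these into $\phi(t+t_1)=(-t)^m e^{\nu t}(1+o(1))$, $\phi'(t+t_1)=\nu(-t)^m e^{\nu t}(1+o(1))$ and $\psi(t+t_1)=\frac{be^{-\nu ch}}{1-r}(-t)^m e^{\nu t}(1+o(1))$ — exactly the asserted formula with $\nu(c)=\nu$.

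The crux is the degenerate velocity $c=2\sqrt{1-r}$: the scalar Riccati equation $y'=f-y^2$ carries no spectral gap, so deciding whether $y$ is integrable (a pure exponential, $m=0$) or instead exhibits the borderline decay $y\sim 1/t$ (the extra factor $-t$, $m=1$) — and in particular excluding intermediate rates that would produce a non-integer logarithmic correction — requires a delicate comparison argument rather than a routine perturbation of a hyperbolic linear problem; this is the genuine analytic heart of the corollary. The remaining ingredients (the hyperbolic Riccati estimate, the variation-of-parameters computation for $\psi$, and the translation normalization) are comparatively routine.
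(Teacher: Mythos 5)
Your route is genuinely different from the paper's: there, one first records that $\phi,\phi'$ decay exponentially at $-\infty$ (Lemma \ref{pred}), transfers this to $\psi$ via Theorem \ref{mda}[A], and then obtains all three asymptotic formulas at once by invoking Proposition 7.2 of \cite{MP} for system (\ref{3ade}); the factor $(-t)^m$, $m\in\{0,1\}$, comes from the (at most double) root $\nu\in\{\lambda,\mu\}$ of $\chi(\cdot,c)$, and the critical and non-critical speeds are treated uniformly by that cited result. Your hands-on treatment of the non-critical case through the hyperbolic Riccati equation, and your variation-of-parameters derivation of $\psi=\frac{be^{-\nu ch}}{1-r}\,\phi\,(1+o(1))$ (using $\nu(c-\nu)=1-r$ and $0<\nu<c$), are sound in substance, up to the small inaccuracy that for $\nu=\lambda$ the bound should read $O\bigl(\sup_{s\le t}|f(s)|+e^{(\mu-\lambda)t}\bigr)$ rather than $O(\sup_{s\le t}|f(s)|)$ — still exponentially small, so $y\in L^1(-\infty)$ and $m=0$ follow.

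At the critical speed $c=2\sqrt{1-r}$, however, there is a genuine gap, and it sits exactly where you yourself locate the ``analytic heart''. You assert, without proof, the dichotomy ``either $y\in L^1(-\infty)$ or $y(t)\sim 1/t$'' for $y'=f-y^2$ with exponentially small $f$, referring only to an analogous (not identical) statement in \cite{TPTv1}; this dichotomy is precisely the nontrivial content of the corollary in the critical case and cannot be left as a citation ``of the same nature''. Moreover, even granting it in the form you state, $y(t)\sim 1/t$ is too weak: it only gives $\int_{t_a}^{t}y(s)\,ds=\ln(-t)\,(1+o(1))$, hence $\phi(t)=(-t)^{1+o(1)}e^{\nu t}$, not the asserted $\phi(t+t_1)=(-t)e^{\nu t}(1+o(1))$. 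To obtain the latter (and the existence of the normalizing shift $t_1$) you need the stronger fact that $y(t)-1/t$ is integrable at $-\infty$, i.e. $\int_{t_a}^{t}y(s)\,ds=\ln(-t)+C+o(1)$, and your sketch contains no argument excluding intermediate behaviours such as $y(t)=\frac1t+\frac{\alpha}{t\ln(-t)}+\dots$ beyond the bare claim that they must be excluded. The paper sidesteps this entirely because \cite[Proposition 7.2]{MP} delivers the $(1+o(1))$ polynomial-times-exponential form directly once exponential decay of $(\phi,\phi',\psi)$ is established; if you want a self-contained Riccati proof, the critical-case comparison lemma must actually be stated and proved in this sharper form.
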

\begin{proof} By Lemma \ref{pred}, $\phi(t), \phi'(t)$ decay exponentially at $-\infty$. Then  $\psi(t)$ has the same property due to Theorem \ref{mda}[A].  Therefore we can apply Proposition 7.2 from \cite{MP} together with Theorem  \ref{mda}[A] to system (\ref{3ade}) 
in order to obtain the above asymptotic formulas for $\phi, \phi', \psi$.  Note that $m=1$ only when $c=2\sqrt{1-r}$.    \hfill $\square$
\end{proof}
\begin{lemma} \label{pred>1} Let $(\phi, \psi)$ be a wavefront for (\ref{3ade}) and $r >1$. Then,  
for some $ A > 0,\  t_2 \in {\mathbf R}$, and  small $\sigma >0$, it holds  
$
\phi(t+t_2) = e^{\mu(c)t} + O(e^{(2c- \sigma)t}), \ \psi(t+t_2) = Ae^{ct} + O(e^{(\mu{(c)}- \sigma)t}), \  t \to - \infty. 
$
\end{lemma}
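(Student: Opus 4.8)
The plan is to determine $\phi$ by a Riccati analysis in the spirit of Lemma~\ref{pred}, to determine $\psi$ by an explicit integral representation exhibiting its leading coefficient as positive, and then to bootstrap to the stated error terms. Since $r>1$, the roots $\lambda=\lambda(c)<0<\mu=\mu(c)$ of $x^{2}-cx+(1-r)=0$ are real, distinct and of opposite sign, with $2\mu-c=\mu-\lambda=\sqrt{c^{2}+4(r-1)}>0$, so $\mu>c>0$. By Theorem~\ref{mda}[B], $\psi>\phi$, hence $f(t):=\phi(t)-r\psi(t)<(1-r)\phi(t)<0$ for all $t$, and $\phi,\psi\to0$ as $t\to-\infty$. \emph{Step 1.} The quotient $z:=\phi'/\phi>0$ solves the Riccati equation~(\ref{prodok}), rewritten as $z'+(z-\lambda)(z-\mu)=f(t)$. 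Because $f<(z-\lambda)(z-\mu)$ whenever $z\ge\mu$, the level $\mu$ cannot be crossed upwards: an excursion $z(t_{*})>\mu$, run backwards in $t$, keeps $z$ increasing and forces a finite-time blow-up of $z$, which is impossible while $\phi>0$; hence $0<z(t)\le\mu$ on $\mathbf R$. A direction-field argument analogous to (but simpler than) that of Lemma~\ref{pred} then shows that as $t\to-\infty$, where $f(t)\to0$, the quotient $z$ is forced into every interval $[\mu-\delta,\mu]$ and cannot leave it from below, so $z(t)\to\mu$. In particular $\phi(t)=O(e^{\kappa t})$ as $t\to-\infty$, for every fixed $\kappa\in(c,\mu)$.

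\emph{Step 2.} Multiplying the $\psi$-equation in~(\ref{3ade}) by $e^{-ct}$ and integrating over $[t,+\infty)$ (legitimate since $\psi'(+\infty)=0$ and $c>0$) gives
\begin{equation*}
\psi'(t)=e^{ct}\int_{t}^{+\infty}b\,\phi(s-ch)\bigl(1-\psi(s)\bigr)e^{-cs}\,ds .
\end{equation*}
The integrand is strictly positive; near $-\infty$ it is $O(e^{(\kappa-c)s})$ with $\kappa-c>0$ by Step~1, and near $+\infty$ it is bounded by $b\,e^{-cs}$. Hence the integral over $\mathbf R$, call it $I$, lies in $(0,+\infty)$, while its tail $\int_{-\infty}^{t}b\,\phi(s-ch)(1-\psi(s))e^{-cs}\,ds=O(e^{(\kappa-c)t})$; therefore $\psi'(t)=Ie^{ct}+O(e^{\kappa t})$, and integrating from $-\infty$ (using $\psi(-\infty)=0$) yields $\psi(t)=(I/c)e^{ct}+O(e^{\kappa t})$ with $I/c>0$.

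\emph{Steps 3 and 4.} Putting $y:=\phi'/\phi-\mu\to0$, equation~(\ref{prodok}) becomes $y'+y^{2}+(2\mu-c)y=f(t)=-(rI/c)e^{ct}+O(e^{\kappa t})=O(e^{ct})$; since $0<c<2\mu-c$ the linear part dominates, so $y(t)=O(e^{ct})$, which is integrable near $-\infty$, whence $\ln\phi(t)=\mu t+\mathrm{const}+o(1)$, i.e.\ $\phi(t)=Ce^{\mu t}(1+o(1))$ with $C>0$. Set $t_{2}:=-\mu^{-1}\ln C$, $\phi_{1}(t):=\phi(t+t_{2})$, $\psi_{1}(t):=\psi(t+t_{2})$ and $A:=(I/c)e^{ct_{2}}>0$, so that $\phi_{1}(t)=e^{\mu t}(1+o(1))$ and $\psi_{1}(t)=Ae^{ct}+O(e^{\kappa t})$. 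The pair $(\phi_{1},\psi_{1})$ still satisfies~(\ref{3ade}); in particular $\phi_{1}''-c\phi_{1}'+(1-r)\phi_{1}=\phi_{1}^{2}-r\phi_{1}\psi_{1}=O(e^{(\mu+c)t})$, the $\phi_{1}\psi_{1}$ term dominating because $\mu+c<2\mu$. Writing $\phi_{1}=e^{\mu t}+w$ and using that $e^{\mu t}$ solves the homogeneous linear equation while $e^{\lambda t}$ blows up at $-\infty$, the remainder $w=o(e^{\mu t})$ must coincide with the particular solution obtained from the forcing $O(e^{(\mu+c)t})$ by variation of parameters, hence $w=O(e^{(\mu+c)t})$ since $\mu+c\notin\{\lambda,\mu\}$. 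As $\mu+c>2c$, we obtain $\phi(t+t_{2})=e^{\mu t}+O(e^{(2c-\sigma)t})$, and likewise $\psi(t+t_{2})=Ae^{ct}+O(e^{(\mu-\sigma)t})$, for every sufficiently small $\sigma>0$; this is the assertion.

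The genuine obstacle is Step~1: making the phase-plane picture for~(\ref{prodok}) rigorous — proving $\phi'/\phi$ bounded and convergent to $\mu$ at $-\infty$ — is the only delicate point, and it leans essentially on the sign $f<0$ supplied by Theorem~\ref{mda}[B], which is exactly what stops $z$ from escaping above $\mu$. A secondary point not to be overlooked is the strict positivity $I/c>0$ of the $e^{ct}$-coefficient of $\psi$; the explicit integral formula for $\psi'$ is used precisely so that this coefficient appears as the integral of a strictly positive function rather than being read off from a black-box asymptotic-integration theorem.
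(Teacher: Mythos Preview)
Your argument is correct. It differs from the paper's proof mainly in the order of operations and in being more self-contained. The paper first obtains the crude bound $\phi(t)\le\phi(0)e^{ct}$ by integrating the $\phi$-equation, feeds this into the $\psi$-equation written as $\psi''-c\psi'=F(t)$ with $F=O(e^{ct})$, and then invokes Mallet--Paret's Propositions~7.1/7.2 twice: once to get $\psi=O(e^{(c-\sigma)t})$, and again (after writing $\phi''-c\phi'+(1-r)\phi=G(t)$ with $G=O(e^{(2c-\sigma)t})$) to get $\phi=Be^{\mu t}+O(e^{(2c-\sigma)t})$; the Riccati argument enters only at the end, to rule out $B=0$. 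The positivity of the $\psi$-coefficient $A$ is inferred from the a~priori lower bound $\psi(t)>\psi(0)e^{ct}$.

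You reverse the logic: the Riccati analysis comes first and immediately gives $\phi'/\phi\to\mu$, hence $\phi=O(e^{\kappa t})$ for every $\kappa<\mu$; the $\psi$-asymptotics then follow from the explicit integrated formula $\psi'(t)=e^{ct}\int_t^{+\infty}b\phi(s-ch)(1-\psi(s))e^{-cs}\,ds$, which exhibits $A=I/c$ as the integral of a strictly positive function --- a more transparent proof of $A>0$ than the paper's. Your subsequent bootstrap ($y=z-\mu=O(e^{ct})\Rightarrow\phi=e^{\mu t}+O(e^{(\mu+c)t})$) replaces the black-box citations of \cite{MP} by elementary ODE arguments, at the cost of a little more writing. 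Both routes ultimately rest on the same two ingredients: the Riccati equation~(\ref{prodok}) for $\phi'/\phi$ and the inequality $\psi>\phi$ of Theorem~\ref{mda}[B], which you use as $f<0$ and the paper uses (implicitly) to make the integrand in the $\phi$-integration negative.
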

\begin{proof} Integrating the first equation of (\ref{3ade})  from $-\infty$ to $t$, and using the inequality 
$1-r - \phi(t) +r \phi(t) <0$ for all large negative $t$ (say, for $t \leq T$ where, simplifying, we can take  $T =0$), we obtain that 
$\phi'(t) -c \phi(t) >0$ for $t \leq 0$. Thus $\phi(t) < \phi(0)e^{ct}, \ t \leq 0$.  Similarly, from the second equation of (\ref{3ade}),  we deduce $\psi(t) > \psi(0)e^{ct}, \ t \leq 0$.  The latter equation can be written as $\psi''(t) - c\psi'(t) = F(t)$, where $F(t):= b\phi(t-ch)(\psi(t) -1) = O(e^{ct}), $ $ 
\psi(t), \psi' (t) = o(1),$  $  t \to -\infty$.   But then \cite[Proposition 7.1]{MP} guarantees that $\psi(t), \psi'(t) = O (e^{(c-\sigma)t}), \ t \to -\infty,$ for each small $\sigma >0$.  Now, writing the first equation of (\ref{3ade}) as  $\phi''(t)-c\phi'(t) + (1-r)\phi(t) = G(t)$, where 
$G(t) = O(e^{(2c-\sigma)t}), \ t \to -\infty$, we find analogously that   $\phi(t) = Be^{\mu(c)t} + O(e^{(2c- \sigma)t}), \ t \to -\infty$,  where $\sigma >0, B \geq 0$.  To prove that $B >0$, it suffices to repeat the proof of Lemma \ref{pred} (note that $z(t)$ is bounded on  ${\mathbf R}_-$ because otherwise it blows up in a finite time).  Hence, 
$F(t)= O(e^{(\mu(c)t}), t \to -\infty$,   $\psi(t) > \psi(0)e^{ct}, \ t \leq 0,$ $\mu(c) >c$.  By  \cite[Proposition 7.1]{MP},   this yields the required asymptotic formula for $\psi$.  \hfill $\square$
\end{proof}
Next, we consider the case when  $t \to +\infty$. In order to linearize  system (\ref{3ade})
along the positive steady state $(1,1)$, we use the change of  variables $\phi(t) = 1 - \xi(t),$ $\psi(t)= 1-\theta(t-ch) $, which leads to
\begin{equation}\label{3adelin}\left\{
\begin{array}{ll}
      \xi''(t) - c\xi'(t) - \xi(t)(1-\xi(t)+r\theta(t-ch)) +r\theta(t-ch) =0,
    &    \\
     \theta''(t) - c\theta'(t) - b\theta (t) (1-\xi(t)) =0. &
\end{array}%
\right.
\end{equation}
The characteristic equation $(z^2-cz-1)(z^2-cz-b)=0 $ for this
system at the zero equilibrium has two positive ($\tilde\zeta_2, \zeta_2=0.5(c +\sqrt{c^2+4b})$) and two negative eigenvalues ($\tilde\zeta_1$  and  $\zeta_1 = 0.5(c -\sqrt{c^2+4b})$, respectively). 
\vspace{1mm}
\begin{lemma} \label{mainas} Let $r >0$. Then for some appropriate $A \geq 0$, $t_0, d, d_1$ and  small $\sigma>0$, we have  that
$(\phi(t+t_0),\phi'(t+t_0))= -Ae^{\tilde \zeta_1t}(1,\tilde \zeta_1)+$
$$\hspace{0mm}\nonumber
\left\{%
\begin{array}{lll}\nonumber (1- re^{\zeta_1(t-ch)}/(b-1), -r\zeta_1 e^{\zeta_1(t-ch)}/(b-1)) + O(e^{(\zeta_1-\sigma) t}), &  \
      b \not=1,
    \\ \nonumber 
    (1- r (t+d) e^{\zeta_1(t-ch)}/(c-2\zeta_1), -r\zeta_1 (t+d_1) e^{\zeta_1(t-ch)}/(c-2\zeta_1)) + O(e^{(\zeta_1-\sigma) t}),
    &  \  b=1 ,
\end{array}%
\right.\nonumber $$ 
$$(\psi(t+t_0),\psi'(t+t_0))=
(1-e^{\zeta_1 (t-ch)}, -\zeta_1e^{\zeta_1 (t-ch)})  +
O(e^{(\zeta_1-\sigma) t}), \ t \to +\infty.
$$
\end{lemma}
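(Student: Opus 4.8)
The plan is to linearize system \eqref{3adelin} along the zero equilibrium and to read off the asymptotics from the location of the characteristic roots $\tilde\zeta_1<0<\tilde\zeta_2$ (roots of $z^2-cz-1=0$) and $\zeta_1<0<\zeta_2$ (roots of $z^2-cz-b=0$), together with the sign structure of the nonlinearities. First I would establish the crude bounds: integrating the second equation of \eqref{3adelin} (where $\theta>0$, $1-\xi>0$ for large $t$) yields $\theta'(t)-c\theta(t)<0$ eventually, hence $\theta(t)<\theta(t_0)e^{ct}$; together with $\theta\to 0$ and the equation $\theta''-c\theta'=b\theta(1-\xi)$ viewed as a linear equation with $O(e^{ct})$ right-hand side, Proposition 7.1 of \cite{MP} already gives $\theta(t)=O(e^{(\zeta_1-\sigma)t})$ for small $\sigma>0$, and in fact, since the forcing term $b\theta(1-\xi)=b\theta+O(\theta^2)=b\theta+o(e^{\zeta_1 t})$, a second application of \cite[Proposition 7.1]{MP} (or 7.2, to kill the spurious $e^{\zeta_2 t}$-branch using $\theta\to 0$) pins down $\theta(t+t_0)=e^{\zeta_1(t-ch)}+O(e^{(\zeta_1-\sigma)t})$, with the normalization $t_0$ chosen so that the leading coefficient is $1$; the formula for $\psi$ then follows from $\psi(t)=1-\theta(t-ch)$.

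Next I would feed this into the $\xi$-equation written in linearized form
\[
\xi''(t)-c\xi'(t)-\xi(t) = H(t), \qquad H(t):= -\xi^2(t)+\xi(t)r\theta(t-ch)-r\theta(t-ch).
\]
The associated homogeneous equation has roots $\tilde\zeta_1<0<\tilde\zeta_2$, and since $\xi\to 0$ as $t\to+\infty$ the $e^{\tilde\zeta_2 t}$-mode is absent, so the decaying homogeneous solution contributes the term $-Ae^{\tilde\zeta_1 t}(1,\tilde\zeta_1)$ with some $A\ge 0$ (the sign and the possibility $A=0$ being dictated by the fact that $\xi>0$ and by whether $\tilde\zeta_1$ lies above or below $\zeta_1$). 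The forcing is $H(t)=-r\theta(t-ch)+O(e^{2\zeta_1 t})= -re^{\zeta_1(t-ch)}+O(e^{(\zeta_1-\sigma)t})$. Here the dichotomy on $b$ enters: when $b\neq 1$ we have $\zeta_1\neq\tilde\zeta_1$, so $\zeta_1$ is not a root of $z^2-cz-1$ and the particular solution produced by variation of parameters (or by an undetermined-coefficient ansatz $Ce^{\zeta_1(t-ch)}$) has coefficient $C=-r/(\zeta_1^2-c\zeta_1-1)$; using $\zeta_1^2-c\zeta_1=b$ this equals $-r/(b-1)$, giving the stated $1-re^{\zeta_1(t-ch)}/(b-1)$. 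When $b=1$ we have $\zeta_1=\tilde\zeta_1$, resonance occurs, and the particular solution acquires a factor $t$; the standard computation gives coefficient $-r/(2\zeta_1-c)=r/(c-2\zeta_1)$ in front of $(t+d)e^{\zeta_1(t-ch)}$, with $d$ absorbing the lower-order constant and the integration constant, and a possibly different shift $d_1$ appearing after differentiation because $\frac{d}{dt}[(t+d)e^{\zeta_1(t-ch)}]=\zeta_1(t+d+1/\zeta_1)e^{\zeta_1(t-ch)}$.

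To make all of this rigorous rather than formal, the mechanism is to quote \cite[Propositions 7.1 and 7.2]{MP}: write each scalar equation as a linear ODE with a right-hand side that is (leading term) $\times(1+o(1))$, verify the required decay and integrability of the forcing, and read off the asymptotics of the solution and its derivative simultaneously from those propositions; a bootstrapping argument improves the error exponent from $\zeta_1-\sigma$ step by step as needed and justifies the $O(e^{(\zeta_1-\sigma)t})$ remainder. The main obstacle I anticipate is precisely the resonant case $b=1$: one has to handle the double root carefully, track that the logarithmic/polynomial correction is linear (not higher degree) because the forcing itself has no polynomial prefactor, and keep the two shifts $d,d_1$ bookkept correctly through differentiation. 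A secondary subtlety is showing that the homogeneous coefficient $A$ is well-defined and nonnegative (and identifying exactly when $A=0$), which requires comparing $\tilde\zeta_1$ with $\zeta_1$ and invoking positivity of $\xi=1-\phi\in(0,1)$ from Proposition \ref{mude}; and one must verify that the spurious growing modes $e^{\tilde\zeta_2 t}$, $e^{\zeta_2 t}$ genuinely drop out, which is where the boundary condition $(\xi,\theta)(+\infty)=(0,0)$ and Proposition 7.2 of \cite{MP} are essential.
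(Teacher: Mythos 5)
Your overall strategy---pass to the linearization (\ref{3adelin}) at its zero equilibrium, extract the $e^{\zeta_1 t}$ leading term of $\theta$, then obtain $\xi$ by subtracting an explicit particular solution and treating the resonant case $b=1$ separately---is essentially the paper's, and your coefficient computations ($-r/(\zeta_1^2-c\zeta_1-1)=-r/(b-1)$, the factor $r/(c-2\zeta_1)$ at resonance) agree with it. However, your very first step contains a genuine gap. At $+\infty$ the inequality $\theta'(t)-c\theta(t)<0$ gives only $\theta(t)<\theta(t_0)e^{c(t-t_0)}$, which, since $c>0$, is a \emph{growth} bound and carries no information as $t\to+\infty$; this is a transcription of the argument the paper uses in Lemma \ref{pred>1} at $-\infty$, where $O(e^{ct})$ really is a decay estimate. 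Consequently you cannot invoke \cite[Proposition 7.1]{MP} with an ``$O(e^{ct})$ right-hand side'' to conclude $\theta(t)=O(e^{(\zeta_1-\sigma)t})$: those propositions require the forcing (and, in Proposition 7.2, the perturbation $m(t)$ of the constant-coefficient operator, here built from $\xi$ and $\theta$) to decay exponentially in the direction considered. Without a preliminary exponential decay estimate for $\xi,\theta,\xi',\theta'$ at $+\infty$, every subsequent application of \cite{MP} in your argument is unjustified.

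The paper closes precisely this hole by a different mechanism: since $(\xi,\theta)(+\infty)=(0,0)$ and the limiting linear system is hyperbolic (the roots of $(z^2-cz-1)(z^2-cz-b)=0$ are off the imaginary axis), it has an exponential dichotomy on $\R_+$, and roughness of dichotomies applied to the perturbed system yields $\theta,\theta',\xi,\xi'=O(e^{lt})$ for some $l<0$; only then are the Levinson asymptotic integration theorem and \cite[Proposition 7.2]{MP} brought in. The Levinson step also settles a point you gloss over: one must know that the coefficient of the $e^{\zeta_1 t}$ mode in $\theta$ is strictly positive (if it vanished, no shift $t_0$ could normalize it to $1$); this follows because the decaying solutions of the second equation of (\ref{3adelin}) form a one-dimensional space asymptotic to $e^{\zeta_1 t}$, combined with $\theta>0$. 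To make your proposal work you must supply the dichotomy (or an equivalent fixed-point/Gronwall) argument for the initial exponential estimate and rule out faster-than-$e^{\zeta_1 t}$ decay of $\theta$; as written, the chain of estimates does not start.
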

\begin{proof} 
Since $\theta(+\infty)=\xi(+\infty)=0$ and the linear system $y''(t) - cy'(t) -y(t)+r z(t-ch)=0,$ $z''(t) - cz'(t) -bz(t)=0$ possesses  an exponentially 
dichotomy on $\mathbf{R}_+$,  the perturbed system 
$$y''(t) - cy'(t) - y(t)(1-\xi(t)+r\theta(t-ch)) +rz(t-ch) =0, \ z''(t) - cz'(t) - bz(t) (1-\xi(t)) =0 $$
is also  exponentially  dichotomic on $\mathbf{R}_+$. 
As a consequence, we obtain that $
\theta(t), \theta'(t), \xi(t), \xi'(t) = O(e^{lt}),$ $t \to +\infty,$ for some negative
$l$.  Moreover, by applying  the Levinson asymptotic integration theorem \cite{MSPE} to the second equation of (\ref{3adelin}),  we find (cf. \cite[Lemma 19]{GT}) that, for some $t_0$,  
\begin{equation*}
(\theta(t+t_0),\theta'(t+t_0))=
(e^{\zeta_1 t}(1+o(1)), -\zeta_1e^{\zeta_1 t}(1+o(1))), \ t \to +\infty.
\end{equation*}
Then \cite[Proposition 7.2]{MP} applied to the 
second equation of (\ref{3adelin}) yields  the required estimation
\begin{equation}\label{1ax}
(\theta(t+t_0),\theta'(t+t_0))=
(e^{\zeta_1 t}, \zeta_1e^{\zeta_1 t}) +  O(e^{(\zeta_1-\sigma) t}) , \ t \to +\infty.
\end{equation}
Let  simplify (\ref{1ax}) by assuming $t_0=0$.  If $b\not=1$ then $\zeta_1 \not= \tilde \zeta_1$
and
$y = \xi (t)+ re^{\zeta_1(t-ch)}/(b-1)
=O(e^{lt})$ satisfies
\begin{equation}\label{ym}
y''(t)-cy'(t)-y(t)(1+m(t))= O(e^{(\zeta_1-\sigma)t}), \ t \to +\infty,
\end{equation}
where $m(t) = O(e^{lt})$. Applying again Proposition 7.2 from
\cite{MP}, we conclude that  if $\zeta_1 > \tilde\zeta_1$
(equivalently, $b \in (0,1)$) then $y(t), y'(t) =
O(e^{(\zeta_1-\sigma')t})$ with $\sigma' \in (0, \sigma)$ so that
$$(\xi (t), \xi'(t)) =\frac{re^{\zeta_1(t-ch)}}{1-b} (1, \zeta_1)
+O(e^{(\zeta_1-\sigma')t}), \ t \to +\infty.$$ When $\zeta_1
< \tilde \zeta_1$ (that is $b>1$), we find similarly that, for some 
$A >0$ and  $t \to +\infty$, 
$$0< \xi(t) =Ae^{\tilde\zeta_1t} + re^{\zeta_1(t-ch)}/(1-b)
+O(e^{(\zeta_1-\sigma')t}), \  \xi'(t) =A\tilde\zeta_1 e^{\tilde\zeta_1t} + r\zeta_1 e^{\zeta_1(t-ch)}/(1-b)
+O(e^{(\zeta_1-\sigma')t}).$$ 

\noindent Finally, if $b=1$ then  $\zeta_1 = \tilde\zeta_1$ and therefore 
$$y = \xi (t)+\frac{rte^{\zeta_1(t-ch)}}{2\zeta_1-c}
=O(e^{lt})$$ satisfies (\ref{ym}). As a consequence,  we obtain (once more invoking \cite[Proposition 7.2]{MP}) that, for some real $d, d_2$, it holds
$(\xi,\xi')(t) = r(t+ d, \zeta_1 t+ d_2)
e^{\zeta_1(t-ch)}/(c-2\zeta_1)+O(e^{(\zeta_1-\sigma')t}), \ t \to +\infty.$   \hfill $\square$
\end{proof}

\section{Proof of Theorem  \ref{mumi>1}} \label{last} \noindent The proof is based on the Berestycki-Nirenberg sliding solution argument.  Let $(\phi_1,\psi_1), \ (\phi_2,\psi_2)$ be two different  (modulo translation) traveling fronts 
of (\ref{3ade}) considered with $h=0$.  By Lemma \ref{mainas}, without restricting the generality, we may assume that $\psi_1$ and $\psi_2$ have the same first terms of their asymptotic expansions at $+\infty$. In addition, due to Lemma \ref{pred>1} (employed when $r>1$) and Corollary \ref{posl} (for $r<1$), we can index $\psi_j$ in such a way that either $\psi_1 (t) > \psi_2(t)$ on some infinite interval $(-\infty,T]$ or $\psi_1,\psi_2$ also have the same first asymptotic exponential terms at $-\infty$ (recall that  $r\not=1$). In each case, 
the closed set 
$
\mathcal{S}:= \{s : \psi_1(t+s) \geq \psi_2(t), \ t \in \R\} \not= \R
$ is non-empty and  contains  finite $s_*:=\inf \mathcal{S}$.   Similarly,  there exists 
the leftmost $t_*$ such that 
$
\phi_1(t+t_*) \geq \phi_2(t), \ t \in \R.
$ 

Let us show that actually $s_* =0$. Indeed, if $s_*>0$ then, 
due to the chosen asymptotic behavior of $\psi_j$ at $\pm \infty$, we find that, for each  $\varepsilon \in [0, s_*)$, it holds  $\psi_1(t+s_*-\varepsilon) > \psi_2(t)$ for all $t \in \R$ excepting $t$ from some compact interval.
This implies the existence of  finite $\bar t$ such that 
$
\delta(\bar t) =0, \ \delta''(\bar t) \geq 0, \ \delta(t):= \psi_1(t+s_*) - \psi_2(t) \geq 0.  
$  
If we suppose additionally that $s_*\geq t_*$ then $\phi_1(\bar t+s_*) - \phi_2(\bar t) > 0,\ t \in \R$.  
(Note that $\phi_1(\bar t+s_*) - \phi_2(\bar t) =0$ implies that $s_*=t_*$ and $\phi_1'(\bar t+s_*) - \phi_2'(\bar t) =0$. 
Since  also $\delta(\bar t) =0= \delta'(\bar t) =0$, the solution uniqueness theorem for (\ref{3ade}) assures that 
$(\phi_1,\psi_1)(t+s_*)\equiv  (\phi_2,\psi_2)(t)$).  But then  we get from (\ref{3ade}) the following contradiction:  
\begin{equation}\label{cru}
0= \delta''(\bar t) - c\delta'(\bar t) + b(\phi_1(\bar t+s_*)- \phi_2(\bar t))(1-\psi_2(\bar t)) >0. 
\end{equation}
Hence, we have to consider the case when $t_*>s_* >0$ and $\phi_1(\bar t+s_*)- \phi_2(\bar t) \leq 0$.  Note that 
$\delta(\pm\infty) =0, \delta (t) \geq 0,$ and therefore $\delta(t)$ has at least two local maxima  at some $t_j$: $t_1 < \bar t < t_2$. Since   $\delta''(t_j) \leq 0, \  \delta'( t_j) =0$, estimations similar to (\ref{cru}) shows that 
$\phi_1(t_j+s_*)- \phi_2(t_j) \geq 0, j =1,2$.  Next, set $S_a(t):=  \phi_1(t+s_*+a)- \phi_2(t)$. Functions $S_a(t)$ are  increasing in $a$ and  strictly positive on $[t_1,t_2]$ for all large $a >0$. On the other hand,  $S_0(t)$ has at least 
one zero on $(t_1,t_2)$. This means that for some $a_* \geq 0$ and $t_c \in (t_1,t_2)$ function $S_*(t):= S_{a_*}(t)$ reaches at $t_c$ its zero global minimum on $[t_1,t_2]$.  Therefore $S_*''(t_c) \geq 0, S_*'(t_c) = 0, S_*(t_c) =0,$ so that, due to (\ref{3ade}), 
\begin{equation}\label{cruz}
0= S_*''(t_c) - cS_*'(t_c) + r\phi_2(t_c)(\psi_1(t_c+s_*+a_*) -\psi_2(t_c)) \geq 0.   
\end{equation}
This shows that $a_*=0$ and that 
$$\psi_1'(\bar t+s_*)- \psi_2'(\bar t)=\psi_1(\bar t+s_*)- \psi_2(\bar t)=\phi_1'(\bar t+s_*)- \phi_2'(\bar t)=\phi_1(\bar t+s_*)- \phi_2(\bar t) = 0.$$ 
But then, by the uniqueness theorem for (\ref{3ade}), $(\phi_1,\psi_1)(t+s_*)\equiv (\phi_2,\psi_2)(t), \ t \in \R$ contradicting to our choice of  $(\phi_j,\psi_j)$.  Therefore we conclude that  $s_*=0$ and $\delta(t) >0,\ t \in \R$.  In the remainder of the proof we will analyze three possible mutual positions of $t_*$ and $0$.

\noindent \underline{Case A}: $t_* <0$.  Recall that $\psi_1(t) >  \psi_2(t)$, $\phi_1(t+t_*) \geq \phi_2(t)$.  Due to the coincidence of the  principal terms of asymptotic representations for $\psi_1,\psi_2$ at $+\infty$, we see that, for every small $\delta \in (0,|t_*|)$ the graphs of functions $\psi_1(t-\delta)$ and $\psi_2(t)$ have at least one intersection 
on some interval $[T, +\infty)$. In fact, we may assume that $\psi_1(T-\delta)> \psi_2(T)$ and 
$\psi_1(t-\delta)< \psi_2(t), t \in [T_1, +\infty),$ for some $T_1 >T$.  It is clear also that $\phi_1(t-\delta)> \phi_2(t)$ for all $t \in \R$.  Next, we consider the family of functions $\psi_1(t-\delta)+a$ and  the following non-empty and closed set 
$$
\frak{A}:= \{a\geq 0: \psi_1(t-\delta)+a \geq  \psi_2(t), \ t \in [T, +\infty) \}.
$$
Set  $a_* =\inf \frak{A}$,  it is evident that $a_* >0$ and that 
$w(t):= \psi_1(t-\delta)+a_* - \psi_2(t)$ has at least one zero  $t_p \in (T, +\infty)$, where, in addition,   $w'(t_p)=0,  w''(t_p) \geq 0$. But then, due to equations    (\ref{3ade}), 
$$
0 = w''(t_p) -c w'(t_p) + b\left(a_* \phi_2(t_p) + [\phi_1(t_p-\delta)-\phi_2(t_p)](1-  \psi_1(t_p-\delta))\right ) >0,   
$$
a contradiction proving that $t_* \geq 0$. In fact, we have established  a stronger result: for every $\delta >0$, the inequality $\phi_1(t-\delta)> \phi_2(t)$ does not hold on any infinite interval $[T,+\infty)$.  As a consequence, there exists a minimal $\rho \in [0,t_*]$ such that $\phi_1(t+\rho)\geq  \phi_2(t)$ for all  $t \in [T,+\infty)$.  That is, for every small $\delta >0$, equation  $\phi_1(t+\rho-\delta)=  \phi_2(t)$ has at least one root on $(T, +\infty)$
(otherwise, $\phi_1(t+\rho-\delta_j) <  \phi_2(t), \ t > T$, for  some $\delta_j \to  0$ and therefore 
$\phi_1(t+\rho) \leq \phi_2(t),\ t \geq T$, implying a contradiction:  $\phi_1(t+\rho) \equiv \phi_2(t), \ \psi_1(t+\rho) >  \psi_2(t), \  t \geq T$). 

\noindent \underline{Case B}: $t_* =0$, so that  $\psi_1(t) >\psi_2(t), \phi_1(t) \geq \phi_2(t), t \in \R$,  and, for each $\delta_k >0$, the inequalities $\phi_1(t-\delta_1)> \phi_2(t),  \ \psi_1(t-\delta_2)> \psi_2(t)$ do not hold on any  interval $[T,+\infty)$.  Now, it is easy to see that, in fact, $S_*(t): = \phi_1(t) - \phi_2(t) >0, t \in \R$.  Indeed, otherwise $S_*(t_c)=0$ for some $t_c$ and thus we get a contradiction as in (\ref{cruz}), where $s_*=a_*=0$ should be taken.  Hence, for a fixed $T$ and for  small $\delta >0$, each difference $\psi_1(t-\delta) - \psi_2(t), \phi_1(t-\delta) - \phi_2(t)$ has at least one zero on $[T,+\infty)$.  We can choose large $T$ and small $\delta >0$ in such a way  that 
\begin{equation} \label{star}
\phi_2(T) - 2r(1-\psi_1(T-\delta)) >0,  \quad \psi_1(T-\delta) > \psi_2(T), \quad \phi_1(T-\delta) > \phi_2(T)> 2/3.
\end{equation}
In the next stage of the proof, we apply the
sliding solution argument to the families $\epsilon + \psi_1(t-\delta)$ and $2\epsilon r + \phi_1(t-\delta)$.  It is clear 
that the sets 
$$
\mathcal{E}_1:= \{\epsilon \geq 0: \epsilon + \psi_1(t-\delta) \geq  \psi_2(t),\ t \in [T, +\infty) \},
$$
$$
\mathcal{E}_2:= \{\epsilon \geq 0: 2\epsilon r + \phi_1(t-\delta) \geq  \phi_2(t),\ t \in [T, +\infty) \}
$$
are closed and non-empty, and that $e_j = \inf \mathcal{E}_j$ are positive. Suppose first that $e_1\geq e_2$. The 
difference $\gamma(t):= e_1+ \psi_1(t-\delta) - \psi_2(t)$ reaches its global minimum at some point $t_m >T$ where $\gamma(t_m)= \gamma'(t_m) =0$ and $\gamma''(t_m) \geq 0$. We also have that 
$$
2e_1 r + \phi_1(t_m-\delta) \geq  2e_2 r + \phi_1(t_m-\delta) \geq \phi_2(t_m). 
$$
Therefore, using (\ref{3ade}) again, we find that 
$$
0 = \gamma''(t_m) - c \gamma'(t_m) + b\left[e_1\phi_2(t_m) + (\phi_1(t_m-\delta) - \phi_2(t_m))(1-\psi_1(t_m-\delta))\right]\geq 
$$
$$
be_1\left[\phi_2(t_m) - 2r(1-\psi_1(t_m-\delta))\right] >be_1\left[\phi_2(T) - 2r(1-\psi_1(T-\delta))\right]  > 0, 
$$
a contradiction.  So $e_1 < e_2$ and the
difference $\alpha(t):= 2e_2r+ \phi_1(t-\delta) - \phi_2(t)$ reaches its global minimum at some point $t_n >T$ where $\alpha(t_n)= \alpha'(t_n) =0$ and $\alpha''(t_n) \geq 0$. We also have that 
$$
e_2 + \psi_1(t_n-\delta) > e_1 + \psi_1(t_n-\delta) \geq \psi_2(t_n). 
$$
But then, after invoking (\ref{3ade}), we get  a contradiction:  
$$
0 = \alpha''(t_n) - c \alpha'(t_n) + r\phi_1(t_n-\delta)[2e_2+\psi_1(t_n-\delta)-\psi_2(t_n)] + 
$$
$$
2e_2r(r-1+\phi_1(t_n-\delta) +2e_2r -r \psi_2(t_n)) > 2e_2r (r(1-\psi_2(t_n)) -1+1.5\phi_1(t_n-\delta) +2e_2r) >0. 
$$
\noindent \underline{Case C}: $t_* >0$.  For a fixed large $T>0$, we consider $\phi_1(t+\rho)$ where $\rho \in [0,t_*]$ was defined in the last lines of subsection `Case A'. Then $\psi_1(t+\rho)> \psi_2(t), t \in \R,$ and, for each small $\delta >0$,  the equation  $\phi_1(t+\rho-\delta)=  \phi_2(t)$ has at least one root on $(T, +\infty)$.  From this point we can follow the proof given in Case B (beginning  from (\ref{star})). Actually, 
it will be literally the same proof if $\rho =0$. If $\rho >0$ we have to replace, starting from (\ref{star}), $\phi_1(t-\delta), \psi_1(t-\delta)$
with $\phi_1(t+\rho-\delta), \psi_1(t+\rho-\delta)$.  Note also that $e_1=0, e_2 >0$ if $\delta \in (0,\rho)$.

\section{Regular super-solutions and proof of Theorems  \ref{main1}, \ref{main2}} 
Assume that $r >0$ and $c^2>4(1-r)$. Recall that $\lambda = \lambda(c) <
\mu= \mu(c)$ denote the real roots of the characteristic equation
$
\chi(z,c):=z^2 -cz + (1-r)=0.
$
Fix some positive $\nu \in (\lambda, \mu)$. If $r \in(0,1)$ then we define  $k$ as the maximal positive integer such that $k\lambda \leq \nu$ and $(k+1)\lambda > \nu$. 
Obviously, if  $k >1$ then we have $\chi(j\lambda,c) <0$ for all $j=2,\dots,k$.
\subsection{Regular super-solutions and a preparatory theorem} \label{upp}
To prove the
existence of monostable  fronts, we will use   Wu and Zou  version \cite{wz} of the
upper and lower solutions method.   Below, we propose a trick which increases the effectiveness of this  approach for  the BZ system. We will show that it suffices
to find only two solutions (instead of four ones which 
must agree amongst themselves) of a system of differential
inequalities. 
 \begin{definition} \label{defo}{ Assume that continuous and piece-wise  $C^1-$smooth functions
$\psi_+, \ \phi_+$ are positive and have positive derivatives  in
some neighborhoods ${\mathcal O}_1,  \  {\mathcal O}_2$ of the sets
$(-\infty,t_1], \  (-\infty,t_2]$, respectively. We admit here that $(\psi'_+, \ \phi_+')$  has a finite set ${\mathcal{D}}=\{d_1< d_2 <...< d_M\}, $ $  \ d_M < \min\{t_1,t_2\}$,  of the discontinuity points  and one-sided derivatives of  $\psi_+, \ \phi_+$ satisfy $\psi'_+(d_j-) > \psi'_+(d_j+),$ $\phi'_+(d_j-) > \phi'_+(d_j+)$.  
Suppose also that $
\psi_+(-\infty) = \phi_+(-\infty) =0,$ $\psi_+(t_1) = \phi_+(t_2)
=1, $ and that $\psi_+, \  \phi_+$ are $C^2-$smooth in some
vicinities of $t_1, t_2$ and that
\vspace{2mm}
\begin{enumerate}
\item[{\bf D1.}] For a fixed positive $\nu \in (\lambda,\mu), \ m \in\{0,1\}$, and some positive constants $C_1,  \epsilon$, it holds 
$$
\hspace{-7mm}
\psi_+(t) = O(te^{\nu t}), \ \  (\phi_+(t),  \phi_+'(t), \phi_+''(t)) = C_1(-t)^me^{\nu t}(1,\nu,\nu^2)(1+o(1)), \ t \to -\infty.
$$
\item[{\bf D2.}] If $t \leq  \min\{t_1,t_2\}, \ t \not\in \mathcal{D}$, then 
\end{enumerate}
\begin{equation}
\label{3fs} \left\{\begin{array}{ll}
\Lambda_1(\phi_+,\psi_+)(t):= \phi_+''(t) - c\phi_+'(t) + \phi_+(t) (1 - r- \phi_+(t)+ r\psi_+(t)) <0,
      \\
\Lambda_2(\phi_+,\psi_+)(t):=     \psi_+''(t) - c\psi_+'(t) +b \phi_+(t-ch)(1-\psi_+(t)) < 0.
\end{array}%
\right.
\end{equation}
\begin{enumerate}
\item[{\bf D3.}] If $ t_1< t_2$ then
$ \phi_+''(t) - c\phi_+'(t) + \phi_+(t) (1 - \phi_+(t)) <  0,  \ t\in [t_1,t_2]$.

\item[{\bf D4.}] If $t_1> t_2$ then
$
 \psi_+''(t) - c\psi_+'(t) +b\min\{1, \phi_+(t-ch) \} (1-\psi_+(t)) < 0,$ $ t\in [t_2,t_1].
$
\end{enumerate}
\vspace{1mm}
We will call such $(\psi_+, \  \phi_+)$ a regular super-solution  for (\ref{3ade}).}  Observe that we may suppose that  $ \phi_+$ is defined, strictly increasing and smooth on $[t_2, +\infty)$, this fact is implicitly used in $\mathbf{D4}$.
\end{definition}
\begin{remark}\label{upro} Suppose that $\phi_+, \psi_+$ are increasing and  that 
inequalities (\ref{3fs}) hold for all $t \leq \max\{t_1,t_2\}$. 
Then conditions $\mathbf{D3, D4}$ are satisfied automatically. Indeed,  in case $\mathbf{D3}$, we have that $\Lambda_1(\phi_+,1) \leq  \Lambda_1(\phi_+,\psi_+) <0, \ t \in [t_1,t_2]$, while, in case $\mathbf{D4}$,  
$$
 \psi_+''(t) - c\psi_+'(t) +b\min\{1, \phi_+(t-ch) \} (1-\psi_+(t)) \leq  \Lambda_2(\phi_+,\psi_+) <0, \ t \in [t_2,t_1].
$$
\end{remark}
Note that  the  upper solutions for the BZ system proposed in earlier works 
(e.g. see  \cite{ma, wz})  have `correct' behavior at 
$-\infty$ and therefore do not satisfy condition $\mathbf{D1}$.  `Correct' here means `asymptotically similar to the true wavefront' (i.e. satisfying (\ref{a1}), (\ref{a2})). 
\vspace{-1mm}

\begin{thm} \label{7}   Suppose that for given parameters $b,c>2\sqrt{1-r}$,  $r \in (0,1]$,  $h\geq 0$, system (\ref{3ade})
has a regular super-solution $(\psi_+, \  \phi_+)$. Then there exists a monotone
wavefront for (\ref{1de})   moving at the velocity $c$  and satisfying (\ref{a1}), (\ref{a2}).
\end{thm}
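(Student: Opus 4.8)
The plan is to realize the wavefront as a fixed point of the Wu--Zou monotone integral operator \cite{wz}, manufacturing from the single pair $(\psi_+,\phi_+)$ the whole apparatus (upper solution, lower solution, order interval) that the method requires; this is precisely the ``two functions instead of four'' economy announced before the statement. First I would perform the linear change of variables turning the parabolic system underlying (\ref{3ade}) into a quasi-monotone (cooperative) one, fix a constant $\beta>0$ so large that $(\psi,\phi)\mapsto\beta(\psi,\phi)+(\text{reaction terms of }(\ref{3ade}))$ is componentwise non-decreasing in all of its arguments, including the delayed one, on $[0,1]^2$, and rewrite the boundary value problem (\ref{3ade}) as a fixed point equation $\Phi=\mathcal{A}\Phi$, where $\mathcal{A}$ is a monotone, completely continuous integral operator built from the positive Green functions of $u''-cu'-\beta u$. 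With this normalization, admissible upper (resp. lower) solutions of (\ref{3ade}) are exactly the $\Phi$ with $\mathcal{A}\Phi\le\Phi$ (resp. $\mathcal{A}\Phi\ge\Phi$).

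The upper solution is the clamped pair $\bar\Phi:=(\min\{\psi_+,1\},\min\{\phi_+,1\})$, and the point is to check $\mathcal{A}\bar\Phi\le\bar\Phi$. I would split $\mathbf{R}$ into three regions: for $t\le\min\{t_1,t_2\}$ the two strict inequalities (\ref{3fs}) of $\mathbf{D2}$ apply; on the interval between $t_1$ and $t_2$ exactly one component of $\bar\Phi$ has been clamped to $1$, and $\mathbf{D3}$ or $\mathbf{D4}$ — stated precisely so as to absorb the delayed argument $t-ch$ — takes over; and beyond $t_1,t_2$ one has $\bar\Phi\equiv(1,1)$, an equilibrium of the reaction. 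At the finitely many corners $d_j\in\mathcal{D}$ the one-sided jump conditions $\psi_+'(d_j-)>\psi_+'(d_j+)$ and $\phi_+'(d_j-)>\phi_+'(d_j+)$ are exactly what forces the atomic parts of the distributional second derivatives of $\bar\Phi$ to point in the direction compatible with a super-solution, so $\mathcal{A}\bar\Phi\le\bar\Phi$ survives the non-smoothness. Condition $\mathbf{D1}$ plays no role here; it is used only at the very end. What is non-obvious is that $\mathbf{D1}$--$\mathbf{D4}$ have been calibrated so that this single clamped pair is simultaneously an admissible upper solution for both equations of (\ref{3ade}), the delay causing no trouble because $\phi_+,\psi_+$ are increasing.

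The main obstacle is the remaining part of the construction: extracting a non-trivial front, and not merely the trivial state, from the super-solution. This is the heart of Theorem \ref{7} and the place where the argument diverges from the Chen--Guo type proof of \cite{chen}. One produces a solution $\Phi_*=(\psi_*,\phi_*)$ of (\ref{3ade}) with $\Phi_*(-\infty)=(0,0)$ by a monotone iteration starting from $\bar\Phi$, refined by a limiting procedure (over translates of $\bar\Phi$, or over a one-parameter family of regular super-solutions) that prevents the transition layer from drifting off and the limit from collapsing to $(0,0)$. The hard part will be exactly this non-degeneracy: since a regular super-solution decays \emph{faster} at $-\infty$ than a genuine front, none of the classical lower solutions of \cite{ma,wz} lies below $\bar\Phi$, so the usual sub/super sandwich is unavailable and the argument must exploit the monostable structure (instability of $(0,0)$) together with the clamping of $\bar\Phi$ at the stable state $(1,1)$ to keep the iterate pinned near $(1,1)$ on the right. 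Once $\Phi_*\not\equiv(0,0)$ is secured, I would show its components are monotone (by the sliding method, or a posteriori by Proposition \ref{mude}); since the only non-negative equilibria of the reaction are $\{(0,w):w\le1\}\cup\{(1,1)\}$, a non-decreasing $\phi_*$ that vanishes at $-\infty$ and is not identically zero has $\phi_*(+\infty)>0$, hence $\Phi_*(+\infty)=(1,1)$, and $\Phi_*$ is a positive monotone wavefront of (\ref{1de}) of speed $c$.

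Finally one extracts the tails (\ref{a1})--(\ref{a2}). Near $-\infty$, $\mathbf{D1}$ now gives quantitative control on $\bar\Phi$ and hence on $\Phi_*$, and I would feed this into the general asymptotic machinery of Section \ref{etomumi}: for $r\in(0,1)$, Lemma \ref{pred}, Corollary \ref{posl} and the relations of Theorem \ref{mda}[A] identify the exponential tail of $(\phi,\theta)$ and yield (\ref{a2}); in the critical case $r=1$ the smaller root of $\chi(\cdot,c)$ degenerates to $0$, and the same bootstrap — now using Theorem \ref{mda}[B,C] and the comparison with the Zeldovich equation recorded after Theorem \ref{mda} — produces instead the polynomial expansion (\ref{a1}). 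The behaviour at $+\infty$ is given directly by Lemma \ref{mainas}. In short, the two genuinely delicate points are the distributional bookkeeping at the corners $d_j$ and at the clamping thresholds $t_1,t_2$ in checking that $\bar\Phi$ is an upper solution, and — the real crux — the non-degeneracy argument showing that the iteration limit connects $(0,0)$ to $(1,1)$ rather than collapsing to the trivial state.
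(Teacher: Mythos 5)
You have correctly identified the crux — a regular super-solution decays \emph{faster} at $-\infty$ than the true front (that is the whole point of condition $\mathbf{D1}$ with $\nu\in(\lambda,\mu)$), so the clamped pair $(\min\{\psi_+,1\},\min\{\phi_+,1\})$ does not dominate any of the classical lower solutions and the naive sandwich fails. But your proposed resolution of this difficulty (a limiting procedure over translates, "exploiting the instability of $(0,0)$" to keep the downward iteration from collapsing) is exactly the kind of Chen--Guo-style argument the paper explicitly does \emph{not} follow, and you give no concrete mechanism for the non-degeneracy: an iteration started from above can perfectly well converge to one of the intermediate equilibria $(0,w)$, and nothing in your sketch rules this out.

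The paper's actual device is different and is the step missing from your proposal: one does not use $(\psi_+,\phi_+)$ as the upper solution directly, but first \emph{adds} to it a small correction $(\psi_A,\phi_A)$ which is an (approximate) solution of (\ref{3ade}) with the \emph{correct} slow decay at $-\infty$ — the exponential-polynomial ansatz of Lemma \ref{fa} with leading term $Ae^{\lambda t}$ when $r\in(0,1)$, and the shifted algebraic profiles $\phi_T(t-A^{-1}),\psi_Q(t-A^{-1})$ of Lemma \ref{horo} when $r=1$. The corrected pair $\Phi_+=\min\{1,\phi_A+\phi_+\}$, $\Psi_+=\min\{1,\psi_A+\psi_+\}$ is shown to remain an upper solution precisely because the cross terms generated by the sum are negative and because $\mathbf{D1}$ forces the residual $\Gamma\le C(-t)^m e^{\nu t}(\chi(\nu,c)+o(1))<0$ near $-\infty$ — so your assertion that $\mathbf{D1}$ "plays no role" in the upper-solution verification is wrong and is symptomatic of the gap. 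Once corrected, $\Phi_+$ \emph{does} dominate an explicit non-trivial lower solution ($\Psi_-\equiv 0$ together with a KPP--Fisher front of $\phi''-c\phi'+\phi(1-r-\phi)=0$ for $r<1$; truncations of $\phi_{T_n},\psi_{Q_n}$ from Lemma \ref{ojala} for $r=1$), the Wu--Zou iteration runs \emph{upward} from the lower solution, and non-triviality is automatic. The two-sided sandwich (\ref{zer}) between functions with the true asymptotics is also what delivers (\ref{a1})--(\ref{a2}); the upper bound $\bar\Phi$ alone would give the wrong (too fast) decay rate at $-\infty$. Your description of the upper-solution check on the three regions and at the corners $d_j$ is fine as far as it goes, but without the additive correction the proof does not close.
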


\vspace{-1mm}

To prove Theorem \ref{7}, we will need several auxiliary statements. The
first of them  can be viewed as a variant of the Perron theorem for
piece-wise continuous solutions, cf. \cite{bn,GT}.

\vspace{-2mm}

\begin{lemma}\label{imp} Let $\psi: {\mathbf R} \to {\mathbf R}$ be a bounded classical solution of
the  impulsive equation
\begin{equation}\label{imi}
\psi'' + A\psi' + B\psi =f(t), \quad \Delta \psi|_{t_j} = \alpha_j,
\quad \Delta \psi'|_{t_j} = \beta_j,
\end{equation}
where $\{t_j\}$ is a finite increasing sequence, $f: {\mathbf R} \to {\mathbf R}$ is
bounded and continuous at every $t \not= t_j$ and $\Delta w|_{t_j} := w(t_j+)-w(t_j-)$. Assume that $\xi_1< 0<\xi_2$ are real roots of  $z^2 + Az + B =0$.
Then
\begin{eqnarray}\label{pfo} \hspace{10mm}\psi(t)&=&\frac{1}{\xi_1
- \xi_2} \left(\int^t_{-\infty} e^{\xi_1 (t-s)}f(s)ds +
\int_t^{+\infty}e^{\xi_2 (t-s)}f(s)ds\right) \\   \nonumber
 &+&\frac{1}{\xi_2 - \xi_1}\left[\sum_{t<t_j}e^{\xi_2(t-t_j)}(\xi_1\alpha_j
 -\beta_j)+\sum_{t>t_j}e^{\xi_1(t-t_j)}(\xi_2\alpha_j-\beta_j)\right], \
 \ t\not=t_j.
\end{eqnarray}
\end{lemma}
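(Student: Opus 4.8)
The plan is to prove the representation (\ref{pfo}) by the variation-of-constants (Green's function) method for the second-order operator $L:=d^2/dt^2+A\,d/dt+B$, and then to identify the resulting explicit function with the given bounded solution $\psi$ by a uniqueness argument.

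First I would record that, since $\xi_1<0<\xi_2$ are the roots of $z^2+Az+B=0$, the operator $L$ possesses a bounded Green's function on $\mathbf{R}$,
$$
G(t,s)=\frac{1}{\xi_1-\xi_2}
\begin{cases}
e^{\xi_1(t-s)}, & t\ge s,\\[1mm]
e^{\xi_2(t-s)}, & t\le s,
\end{cases}
$$
which is continuous in $(t,s)$, is annihilated by $L$ in the variable $t$ for $t\ne s$, and has the normalized jump $\partial_tG(s+,s)-\partial_tG(s-,s)=1$. Denote by $\Psi$ the right-hand side of (\ref{pfo}); its first line is exactly $\int_{\mathbf{R}}G(t,s)f(s)\,ds$, and its second line is a finite sum of two families of exponentials, one proportional to $e^{\xi_2 t}$ (over the $t_j>t$) and one proportional to $e^{\xi_1 t}$ (over the $t_j<t$). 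Because $f$ is bounded and $\xi_1<0<\xi_2$, the integral converges and is bounded, and so are the two finite exponential sums; moreover, since $G(t,\cdot)$ is continuous, the integral term is $C^1$ on all of $\mathbf{R}$.

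Next I would verify that $\Psi$ is a classical solution of the impulsive problem (\ref{imi}). On any component of $\mathbf{R}\setminus\{t_j\}$ the datum $f$ is continuous, so differentiating the integral term twice (splitting the range of integration at $t$; the two boundary terms produced by the moving endpoint cancel because $G$ is continuous across $t=s$) gives $L\big(\int_{\mathbf{R}}G(t,\cdot)f\big)=f$ there, while each of the two exponential sums is a homogeneous solution and is therefore killed by $L$; thus $L\Psi=f$ on every interval between consecutive $t_j$. For the jump relations at a fixed $t_k$, only the term of index $j=k$ is discontinuous: it belongs to the $e^{\xi_2(\cdot-t_k)}$ family for $t<t_k$ and migrates to the $e^{\xi_1(\cdot-t_k)}$ family for $t>t_k$, and a direct computation of the one-sided limits of this single term and of its derivative at $t_k$ yields
$$
\Delta\Psi|_{t_k}=\frac{(\xi_2\alpha_k-\beta_k)-(\xi_1\alpha_k-\beta_k)}{\xi_2-\xi_1}=\alpha_k,\qquad
\Delta\Psi'|_{t_k}=\frac{\xi_1(\xi_2\alpha_k-\beta_k)-\xi_2(\xi_1\alpha_k-\beta_k)}{\xi_2-\xi_1}=\beta_k,
$$
so $\Psi$ carries exactly the prescribed impulses and, being a bounded combination of the above, is a bounded classical solution of (\ref{imi}).

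Finally, for uniqueness I would consider $u:=\psi-\Psi$. Between the $t_j$ it is $C^2$ and solves $Lu=0$; at each $t_j$ the jumps of $u$ and of $u'$ vanish (both $\psi$ and $\Psi$ carry the same $\alpha_j,\beta_j$), and then $Lu=0$ forces $u''$ to agree from both sides, so $u$ extends to a genuine $C^2$ solution of $Lu=0$ on all of $\mathbf{R}$; such a solution is $c_1e^{\xi_1t}+c_2e^{\xi_2t}$, and boundedness (letting $t\to-\infty$, then $t\to+\infty$, using $\xi_1<0<\xi_2$) forces $c_1=c_2=0$. Hence $\psi=\Psi$, which is (\ref{pfo}). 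I expect the only genuinely delicate point to be the differentiation of the variation-of-constants integral across the discontinuity points of $f$: one must check that $\Psi'$ stays continuous there (so that every jump of $\Psi$ and $\Psi'$ is accounted for by the exponential sums) even though $\Psi''$ inherits the jumps of $f$; the rest is a routine verification.
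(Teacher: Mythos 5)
Your proof is correct and follows essentially the same route as the paper, whose entire proof is the one-line remark that the function defined by (\ref{pfo}) is straightforwardly checked to verify (\ref{imi}). Your write-up is in fact more complete: the explicit jump computations at the $t_j$ and the closing uniqueness step (the difference of two bounded solutions carrying the same impulses is a bounded, globally $C^2$ solution of the homogeneous equation, hence zero since $\xi_1<0<\xi_2$) are precisely the details the paper leaves implicit, and they are what justifies the lemma's claim that \emph{every} bounded classical solution is given by (\ref{pfo}).
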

\begin{proof}
It is 
straightforward to check  that $\psi$ defined by (\ref{pfo})
verifies  equation (\ref{imi}).  \hfill $\square$ \end{proof}

\vspace{-2mm}

\begin{lemma} \label{fa} For  $r \in (0,1)$, set  $a_1:=1,\ b_1 := be^{-\lambda ch}/(1-r)$. There are functions
\begin{eqnarray*}
\phi_A(t)&:=& A(a_1e^{\lambda t} + a_2 e^{2\lambda t}+
\dots + a_k
e^{k\lambda t}), \\
\psi_A(t)&:=& A(b_1e^{\lambda t} + b_2 e^{2\lambda t}+
\dots +  b_k e^{k\lambda t}),
\end{eqnarray*}
and a polynomial $P(x,y)$ such that, for all $t \in {\mathbf R} $,
\begin{equation}\label{3} \hspace{5mm}\left\{
\begin{array}{ll}
    \phi_A''(t) - c\phi_A'(t) + \phi_A(t) (1 - r- \phi_A(t)+ r\psi_A(t)) =A^2P(e^{\lambda t},A)e^{\lambda (k+1)t},
    &    \\
    \psi_A''(t) - c\psi_A'(t) +b \phi_A(t-ch) = 0. &
\end{array}%
\right.
\end{equation}
\end{lemma}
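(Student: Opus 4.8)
The plan is to \emph{construct} the coefficients $b_1,\dots,b_k$, $a_2,\dots,a_k$ and the polynomial $P$ by plugging the two trial functions into (\ref{3}) and matching, in each line, the coefficients of $e^{j\lambda t}$. It is worth noticing in advance that the $a_j$ and $b_j$ will depend on the parameter $A$ (harmlessly: the trial function becomes a truncated power series in $Ae^{\lambda t}$), and that the leading choice $a_1=1$, $b_1=be^{-\lambda ch}/(1-r)$ is the one dictated by the asymptotics of a genuine front, cf.\ Corollary \ref{posl}.

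First I would settle the second, linear identity. Since $\psi_A''(t)-c\psi_A'(t)=A\sum_{j=1}^{k}b_j\,j\lambda(j\lambda-c)\,e^{j\lambda t}$ while $b\phi_A(t-ch)=bA\sum_{j=1}^{k}a_j e^{-j\lambda ch}e^{j\lambda t}$, the relation $\psi_A''-c\psi_A'+b\phi_A(\cdot-ch)\equiv 0$ holds as soon as
\[
b_j=\frac{b\,e^{-j\lambda ch}}{j\lambda(c-j\lambda)}\,a_j ,\qquad j=1,\dots,k .
\]
This makes sense because $0<j\lambda\le k\lambda\le\nu<\mu=c-\lambda<c$, so $j\lambda(c-j\lambda)>0$; for $j=1$ one uses $\lambda(c-\lambda)=c\lambda-\lambda^{2}=1-r$ to recover $b_1=be^{-\lambda ch}/(1-r)$ from $a_1=1$. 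This equation produces no remainder.

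For the first identity, writing $\phi_A=A\Phi$, $\psi_A=A\Psi$ with $\Phi=\sum_j a_je^{j\lambda t}$, $\Psi=\sum_j b_je^{j\lambda t}$, its left-hand side equals
\[
A\sum_{j=1}^{k}a_j\,\chi(j\lambda,c)\,e^{j\lambda t}+A^{2}\sum_{j=2}^{2k}\left(\sum_{l+m=j}\bigl(ra_lb_m-a_la_m\bigr)\right)e^{j\lambda t}.
\]
The $j=1$ term drops out because $\chi(\lambda,c)=0$, and --- this is exactly why only exponents $e^{j\lambda t}$, $j\ge 1$, are permitted in the trial functions --- the quadratic part contributes only exponents $\ge 2\lambda$, so it never resonates with the critical mode. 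For $2\le j\le k$ I then impose that the coefficient of $e^{j\lambda t}$ vanish, i.e.\ $Aa_j\chi(j\lambda,c)+A^{2}\sum_{l+m=j}(ra_lb_m-a_la_m)=0$; since $\chi(j\lambda,c)<0$ here (the defining property of $k$) and the quadratic sum involves only indices smaller than $j$, this is a triangular recursion $a_1,b_1\to a_2\to b_2\to\dots\to a_k\to b_k$ that determines every coefficient uniquely, as a polynomial in $A$. What then remains on the left are only the exponents $e^{j\lambda t}$ with $k+1\le j\le 2k$, carrying coefficients $A^{2}c_j(A)$ with $c_j$ polynomial; factoring out $A^{2}e^{(k+1)\lambda t}$ and setting $x=e^{\lambda t}$ yields the asserted identity with $P(x,A):=\sum_{j=k+1}^{2k}c_j(A)\,x^{\,j-k-1}$.

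The only point that really needs checking --- and the one on which the lemma hinges --- is that the recursion be well posed: the denominators $j\lambda(c-j\lambda)$ for $1\le j\le k$ and $\chi(j\lambda,c)$ for $2\le j\le k$ must be nonzero. Both follow from the construction of $k$, together with $0<j\lambda<c$ and the factorization $\chi(\cdot,c)=(\cdot-\lambda)(\cdot-\mu)$, which is $<0$ on $(\lambda,\mu)\ni j\lambda$. After that a routine induction shows the coefficients stay polynomial in $A$, and the degenerate case $k=1$ --- where $\Phi=e^{\lambda t}$, $\Psi=b_1e^{\lambda t}$ and $P$ reduces to the constant $rb_1-1$ --- is immediate. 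I do not anticipate any deeper obstruction: the whole content of the lemma is this bookkeeping of the coefficient matching.
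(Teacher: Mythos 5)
Your proof is correct and follows essentially the same route as the paper: substitute the exponential ansatz, annihilate the coefficients of $e^{j\lambda t}$ for $j\le k$ via the triangular recursion $a_j=A\sum_{p+q=j}a_p(a_q-rb_q)/\chi(j\lambda,c)$, $b_j=ba_je^{-j\lambda ch}/(j\lambda(c-j\lambda))$ (your denominator $j\lambda(c-j\lambda)$ is exactly the paper's $1-r-\chi(j\lambda,c)$), and collect the leftover exponents $j\ge k+1$ into $A^2P(e^{\lambda t},A)e^{(k+1)\lambda t}$. Your explicit check that $\chi(j\lambda,c)<0$ for $2\le j\le k$ and $j\lambda(c-j\lambda)>0$ just makes precise what the paper states as obvious.
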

\begin{proof}
Indeed, for a suitable  polynomial $P(x,y)$, we have that
\begin{eqnarray*} &&
\phi_A''(t) - c\phi_A'(t) + \phi_A(t) (1 - r- \phi_A(t)+
r\psi_A(t))  \\ 
&=&  A\sum_{j=1}^k(\chi(j\lambda,c)a_j -
\sum_{p+q=j}Aa_p(a_q-rb_q))e^{j\lambda t} + A^2P(e^{\lambda
t},A)e^{\lambda (k+1)t},\ {\rm and}
\end{eqnarray*}
\vspace{-8mm}
$$ \psi_A''(t) - c \psi_A'(t) +b
\phi_A(t-ch)=A\sum_{j=1}^k[\chi(j\lambda,c)b_j+(ba_je^{-j\lambda ch
}+(r-1)b_j)]e^{j\lambda t}.
$$
In order to obtain (\ref{3}), we define recursively  ($j=2,\dots,k$)
$$
a_1= 1, b_1 = \frac{be^{-\lambda ch}}{1-r}, \ a_j =
A\frac{\sum_{p+q=j}a_p(a_q-rb_q)}{\chi(j\lambda,c)}, \ b_j
=\frac{ba_je^{-j\lambda ch}}{1-r-\chi(j\lambda,c)}.  \qquad \square
$$  
 \end{proof}
\begin{remark} \label{pos} It is easy to see that, for  
some rational functions $a_j(b,r,\lambda,c,
\tau)$ and $b_j(b,r,\lambda,c, \tau)$, it holds
$$
a_j= A^{j-1}a_j(b,r,\lambda,c,e^{-\lambda ch}),  \ b_j =
A^{j-1}b_j(b,r,\lambda,c,e^{-\lambda ch}).
$$
Therefore
$A^{-1}(\phi_A(t), \psi_A(t))= (1,be^{-\lambda ch}(1-r)^{-1})e^{\lambda t}
+AO(e^{2\lambda t}), \ t \to - \infty. $  This implies that for
every $\tau_0 \in {\mathbf R} $ there exists $A_0 >0$ such that
$
\phi_A, \psi_A, \phi_A', \psi_A'
$ are positive 
for all $t \leq \tau_0, \ A \in (0,A_0]$. In addition, the derivatives of $\phi_A,\psi_A$ have the property $ \lim_{A\to
0+}(\phi^{(k)}_A(t),\psi^{(k)}_A(t))=(0,0), \ k =0,1,2,$ uniformly on $(-\infty,\tau_0]$. 
\end{remark}

\vspace{-2mm}

Next, in order to get an analog of $\phi_A,\psi_A$ when $r=1$, we consider the  functions
\begin{eqnarray*}
\phi_T(t)&:=&  \frac{2c^2/b}{t^2} + \frac{{\mathcal A}\ln(-t)}{t^3}  +\frac{T\ln^2(-t)}{t^4}, \\
\psi_Q(t)&:= &  -\frac{2c}{t}+ \frac{{\mathcal C}\ln(-t)}{t^2} + \frac{\mathcal F}{t^2} + \frac{Q\ln^2(-t)}{t^3}, \quad  t < -e,
\end{eqnarray*}
which coefficients ${\mathcal A,C,F}$ depend only on $c, b,h$ and are defined explicitly by  
\begin{eqnarray*}
&&{\mathcal A}:= -\frac{8c}{3b}(c^2(1+h+\frac1b)-4), \  {\mathcal C}:= \frac{4}{3}(c^2(1+h+\frac1b)-4),  \  ({\rm so \ that} \ b{\mathcal A}+2c{\mathcal C} =0); \\
&& {\mathcal F}:= \frac23(c^2(\frac1b-2-2h)-1)  \  (= \frac{2c^2}{b}-6+\frac{b}{2c}{\mathcal A} = 2(1-c^2-c^2h) + \frac12{\mathcal C}).
\end{eqnarray*}
Since 
$$
\frac{1}{(t-ch)^m} = \frac{1}{t^m} + \frac{mch}{t^{m+1}} + O\left(\frac{1}{t^{m+2}}\right ), \ \  
\frac{\ln^k(ch-t)}{(t-ch)^m} = \frac{\ln^k(-t)}{t^m}\left(1 + \frac{mch}{t}+ O\left(\frac{1}{t\ln(-t)}\right)\right) 
$$
at $t =-\infty,$ we find that 
\begin{eqnarray*}
\phi_T(t-ch)&:=&  \frac{2c^2/b}{t^2} + \frac{{\mathcal A}\ln(-t)}{t^3} + \frac{4c^3h/b}{t^3} +\frac{T\ln^2(-t)}{t^4}\left(1 + O(\frac1t)\right)+ O\left(\frac{\ln(-t)}{t^4}\right). 
\end{eqnarray*}
Then a straightforward computation shows that 
\begin{eqnarray*}
R_1(t):= \phi_T''(t) - c\phi_T'(t) + \phi_T(t) (\psi_Q(t)- \phi_T(t)) = r_{11}\frac{\ln^2(-t)}{t^5} +r_{12}\frac{\ln(-t)}{t^5} + \frac{O(1)}{t^5},
\end{eqnarray*}
where
$$
r_{11}:=  2cT+2c^2Q/b+{\mathcal A}{\mathcal C}, \quad r_{12}:= 12{\mathcal A}+{\mathcal AF}-4{\mathcal A}c^2/b- 2cT;
$$ 
\begin{eqnarray*}
\hspace{-3mm}\mbox{and} \quad R_2(t):= \psi_Q''(t) - c\psi_Q'(t) +b \phi_T(t-ch)(1-\psi_Q(t))  = 
r_{21}\frac{\ln^2(-t)}{t^4} +r_{22}\frac{\ln(-t)}{t^4} + \frac{O(1)}{t^4},
\end{eqnarray*}
with
$
r_{21}:=  bT+3cQ, \quad r_{22}:= 6{\mathcal C}(1-c^2) +3bch{\mathcal A}-2cQ.
$ 
\begin{lemma} \label{horo} There exist $T,Q$  and $\sigma=\sigma(T,Q,c,b,h)  >e$ such that $\phi_T(t) >0, $ $\psi_Q(t) >0$ and 
$R_1(t) <0, $ $ R_2(t) <0$ for all $t \leq   -\sigma$. 
\end{lemma}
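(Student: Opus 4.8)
The plan is to exploit $T$ and $Q$ as free parameters: choose them so that the coefficients $r_{11},r_{21}$ of the leading $\ln^2(-t)$-terms in the already-established expansions of $R_1,R_2$ have the signs that force negativity, and then take $\sigma$ large enough that these leading terms dominate all lower-order remainders and that, in addition, $\phi_T,\psi_Q$ are positive on $(-\infty,-\sigma]$.

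First I would dispose of positivity, which needs no constraint on $T,Q$. Writing $\phi_T(t)=t^{-2}\bigl(2c^2/b+\mathcal A\ln(-t)/t+T\ln^2(-t)/t^2\bigr)$ and $\psi_Q(t)=t^{-1}\bigl(-2c+\mathcal C\ln(-t)/t+\mathcal F/t+Q\ln^2(-t)/t^2\bigr)$, and using $\ln(-t)/t\to0$, $\ln^2(-t)/t^2\to0$ as $t\to-\infty$, the bracketed factors tend to $2c^2/b>0$ and $-2c<0$ respectively; since $t^{-2}>0$ and $t^{-1}<0$ for $t<0$, both $\phi_T(t)$ and $\psi_Q(t)$ are positive once $-t$ is large, for any values of $T,Q$.

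Next comes the key point. The map $(T,Q)\mapsto(r_{11},r_{21})=(2cT+2c^2Q/b+\mathcal A\mathcal C,\ bT+3cQ)$ is affine with linear part of determinant $6c^2-2c^2=4c^2\neq0$, hence a bijection of $\R^2$; in particular I can pick $(T,Q)=(T,Q)(c,b,h)$ with $r_{11}>0$ and $r_{21}<0$, say $r_{11}=1$ and $r_{21}=-1$. With these $T,Q$ now fixed, all the $O(1)$-constants in $R_1(t)=r_{11}\ln^2(-t)/t^5+r_{12}\ln(-t)/t^5+O(1)/t^5$ and $R_2(t)=r_{21}\ln^2(-t)/t^4+r_{22}\ln(-t)/t^4+O(1)/t^4$ are fixed numbers depending only on $c,b,h$. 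Since $t^5<0$ and $t^4>0$ for $t<0$, the leading term of $R_1$ is $\ln^2(-t)/t^5<0$ and that of $R_2$ is $-\ln^2(-t)/t^4<0$; in each case this leading term exceeds in absolute value the sum of the $\ln(-t)/t^{\bullet}$ and $1/t^{\bullet}$ remainders as soon as $\ln(-t)$ is large enough. Hence $R_1(t)<0$ and $R_2(t)<0$ for $t\le-\sigma$ with $\sigma=\sigma(T,Q,c,b,h)$ sufficiently large; enlarging $\sigma$ further past $e$ and past the positivity thresholds of $\phi_T,\psi_Q$ (which also keeps $t$ inside the domain $t<-e$ where these functions and the expansions are defined) gives the claim.

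I do not anticipate a genuine obstacle; the only care required is bookkeeping — tracking the signs of odd versus even powers of the negative variable $t$, and verifying that the affine system for $(T,Q)$ is non-degenerate, which the determinant $4c^2\neq0$ guarantees, so that the target sign pattern $r_{11}>0$, $r_{21}<0$ is actually attainable. (If one preferred to annihilate the leading terms outright, the same determinant shows $r_{11}=r_{21}=0$ has the unique solution $Q=b\mathcal A\mathcal C/(4c^2)$, $T=-3\mathcal A\mathcal C/(4c)$, after which one would check $r_{12}>0$ and $r_{22}<0$ using $b\mathcal A+2c\mathcal C=0$ and the explicit formula for $\mathcal F$; but the sign-choosing route above sidesteps that computation.)
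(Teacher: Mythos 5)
Your proposal is correct and follows essentially the same route as the paper's (very brief) proof: choose $T,Q$ so that $r_{11}>0$ and $r_{21}<0$, note $t^2\phi_T(t)\to 2c^2/b>0$ and $t\psi_Q(t)\to -2c<0$, and let the dominant $\ln^2(-t)$-terms (with $t^5<0$, $t^4>0$) force $R_1<0$, $R_2<0$ for $t\le -\sigma$. The determinant check confirming that the sign pattern $(r_{11},r_{21})$ is attainable is a harmless extra justification the paper leaves implicit.
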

\begin{proof} Take $T,Q$ such that $r_{11} >0$ and $r_{21} <0$.  Then it is easy to see that there is 
$\sigma= \sigma(T,Q,c,b,h) >e$ such that $t^2\phi_T(t) >0$, $ \ t^5R_1(t) > 0$  and $t \psi_Q(t) <0, $ $ t^4R_2(t) < 0$ for $t < -\sigma$.   \hfill $\square$
\end{proof}
\begin{lemma}  \label{ojala} For every $\epsilon >0$  there are $T_n,Q_n$ sufficiently large in absolute value and $\sigma_n=\sigma(T_n,Q_n,c,b,h)  <-e,$ such that the above defined functions $\phi_{T_n}, \psi_{Q_n}$  
\begin{enumerate}
\item[I.]
are positive and strictly increasing 
on the interval $(-\infty, \sigma_n]$;  
\item[II.] are strictly decreasing on the interval $(\sigma_n, \sigma_n+ch]$;
\item[III.] $\phi'_{T_n}(\sigma_n)=  \psi'_{Q_n}(\sigma_n) =0$ and  $\phi_{T_n}(\sigma_n)+  \psi_{Q_n}(\sigma_n) < \epsilon$;
\item[IV.] $R_1(t) >0 $ and  $R_2(t) >0$ for all $t \leq \sigma_n$. 
\end{enumerate}
\end{lemma}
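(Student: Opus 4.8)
The idea is to \emph{prescribe} the point $\sigma_n$ and then \emph{solve} for the two parameters. Fix any sequence $\sigma_n\to-\infty$ (e.g. $\sigma_n:=-n$). Since $\phi_T'(t)$ and $\psi_Q'(t)$ are affine in $T$ and in $Q$, with coefficients of $T,Q$ equal to $(2\ln(-t)-4\ln^2(-t))t^{-5}$ and $(2\ln(-t)-3\ln^2(-t))t^{-4}$ (nonzero at $t=\sigma_n$ for $n$ large), the equations $\phi_{T_n}'(\sigma_n)=0$, $\psi_{Q_n}'(\sigma_n)=0$ determine $T_n,Q_n$ uniquely, namely
\[
T_n=\frac{\tfrac{4c^2}{b}\sigma_n^2-\mathcal{A}\sigma_n(1-3\ln(-\sigma_n))}{2\ln(-\sigma_n)-4\ln^2(-\sigma_n)},\qquad Q_n=\frac{-2c\,\sigma_n^2-(\mathcal{C}-2\mathcal{C}\ln(-\sigma_n)-2\mathcal{F})\sigma_n}{2\ln(-\sigma_n)-3\ln^2(-\sigma_n)} .
\]
Hence $T_n=-\dfrac{c^2\sigma_n^2}{b\ln^2(-\sigma_n)}(1+o(1))\to-\infty$ and $Q_n=\dfrac{2c\,\sigma_n^2}{3\ln^2(-\sigma_n)}(1+o(1))\to+\infty$; in particular $|T_n|,|Q_n|\to\infty$, $\ \sigma_n<-e$ for $n$ large, $\ T_n\ln^2(-\sigma_n)/\sigma_n^2\to-c^2/b$, $\ Q_n\ln^2(-\sigma_n)/\sigma_n^2\to 2c/3$. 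This already gives the identities $\phi_{T_n}'(\sigma_n)=\psi_{Q_n}'(\sigma_n)=0$ of (III) and the requirement that $T_n,Q_n$ be large in absolute value.

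Everything else I would read off from a single self-similar substitution. Put $s:=\sigma_n/t$, so $t\in(-\infty,\sigma_n]$ corresponds to $s\in(0,1]$ and $t\in(\sigma_n,\sigma_n+ch]$ to $s$ slightly above $1$ (as $ch/|\sigma_n|\to0$), with $\frac{d}{dt}=-\frac{s^2}{\sigma_n}\frac{d}{ds}$. Using the above asymptotics one checks that, together with the first two $s$-derivatives and uniformly on each $[\delta,1+\delta]$ (behaviour near $s=0$ being treated separately, see below),
\[
\sigma_n^2\,\phi_{T_n}(\sigma_n/s)\to\widetilde\Phi(s):=\frac{c^2}{b}\,s^2(2-s^2),\qquad\sigma_n\,\psi_{Q_n}(\sigma_n/s)\to\widetilde\Psi(s):=\frac{2c}{3}\,s(s^2-3).
\]
Here $\widetilde\Phi$ is positive and strictly increasing on $(0,1)$ with a non-degenerate maximum at $s=1$ ($\widetilde\Phi''(1)=-8c^2/b<0$), while $\widetilde\Psi$ is negative and strictly decreasing on $(0,1)$ with a non-degenerate minimum at $s=1$ ($\widetilde\Psi''(1)=4c>0$); also $\widetilde\Phi(1)=c^2/b$, $\widetilde\Psi(1)=-4c/3$. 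Since $\phi_{T_n}=\sigma_n^{-2}(\widetilde\Phi+o(1))$ with $\sigma_n^{-2}>0$, $\ \psi_{Q_n}=\sigma_n^{-1}(\widetilde\Psi+o(1))$ with $\sigma_n^{-1}<0$, $\ \phi_{T_n}'(t)=-\sigma_n^{-3}s^2(\widetilde\Phi'(s)+o(1))$ and $\psi_{Q_n}'(t)=-\sigma_n^{-2}s^2(\widetilde\Psi'(s)+o(1))$, the sign table of $\widetilde\Phi,\widetilde\Psi,\widetilde\Phi',\widetilde\Psi'$ on $(0,1)$ and just above $1$ yields at once the positivity and monotonicity of (I), the reversed monotonicity on $(\sigma_n,\sigma_n+ch]$ of (II), and $\phi_{T_n}(\sigma_n)+\psi_{Q_n}(\sigma_n)=\tfrac{c^2/b}{\sigma_n^2}(1+o(1))-\tfrac{4c/3}{\sigma_n}(1+o(1))\to0<\epsilon$ in (III).

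For (IV) the same substitution turns $R_1,R_2$ into convergent expressions: since $\phi_T''=O(\sigma_n^{-4})$, $\phi_T^2=O(\sigma_n^{-4})$ while $-c\phi_T'$ and $\phi_T\psi_Q$ are $O(\sigma_n^{-3})$, and since $\psi_Q''=O(\sigma_n^{-3})$ while the $ch$-shift in $\phi_T(t-ch)$ perturbs only lower-order terms, one obtains
\[
\sigma_n^3R_1(\sigma_n/s)\to c\,s^2\widetilde\Phi'(s)+\widetilde\Phi(s)\widetilde\Psi(s)=-\frac{2c^3}{3b}\,s^5(1+s^2),\qquad \sigma_n^2R_2(\sigma_n/s)\to c\,s^2\widetilde\Psi'(s)+b\,\widetilde\Phi(s)=c^2s^4 ,
\]
the two polynomial identities being elementary. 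The limits are strictly negative and strictly positive on $(0,1]$, and as $\sigma_n^3<0$, $\sigma_n^2>0$ this gives $R_1>0$, $R_2>0$ on $(-\infty,\sigma_n]$ for $n$ large. (That $R_1$ has no $1/t^3$- or $1/t^4$-order term and $R_2$ no $1/t^2$- or $1/t^3$-order one — so that $R_1,R_2$ really are of the size used here — is exactly what the special values of $\mathcal{A},\mathcal{C},\mathcal{F}$ and the relation $b\mathcal{A}+2c\mathcal{C}=0$ were arranged to guarantee.)

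\textbf{Main obstacle.} The delicate part is the \emph{uniformity} of all these limits over the whole half-line, i.e. as $s\to0^{+}$: there $T_n,Q_n$ are of size $|\sigma_n|^2/\ln^2|\sigma_n|$, so crude Taylor bounds are useless, and one must exploit that the $T_n$- and $Q_n$-monomials carry the extra factors $s^4,s^3$ and that the logarithms enter only through $L:=\ln(-\sigma_n)+\ln(1/s)$, while $s^2L^2$ attains its maximum over $(0,1]$ precisely at $s=1$ — the point at which a zero of the derivative was imposed; this is why the competing negative term never overtakes the leading positive one on $(0,1)$. Concretely I would split $(-\infty,\sigma_n]=(-\infty,\sigma_n/\delta]\cup[\sigma_n/\delta,\sigma_n]$, use the uniform $C^2$-convergence above on the $s$-compact piece, and on the far piece $t\le\sigma_n/\delta$ fall back on the explicit expansions preceding Lemma~\ref{horo}, noting $r_{11}=2cT_n+2c^2Q_n/b+\mathcal{A}\mathcal{C}=-\tfrac{c^2}{b}Q_n(1+o(1))<0$ and $r_{21}=bT_n+3cQ_n=\tfrac{3c}{2}Q_n(1+o(1))>0$, together with the analogous monomial estimates showing that the leading terms $2c^2/(bt^2)$ and $-2c/t$ dominate $\phi_{T_n},\psi_{Q_n}$ and their first two derivatives there.
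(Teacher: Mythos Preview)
Your proof is correct and takes a genuinely different route from the paper's. The paper chooses $T_n\to-\infty$ as the primary parameter, sets $Q_n=-\kappa_n bT_n/c$ with an auxiliary $\kappa_n\in[0.34,0.98]$, and then tunes $\kappa_n$ (via an intermediate-value argument for the implicitly defined functions $\sigma_1(T),\sigma_2(Q)$) so that the two critical points coincide; the signs of $R_1,R_2$ are then tracked by hands-on asymptotic bookkeeping of the coefficients $r_{ij}$ together with the quadratic $T_n^2$ corrections, comparing $\sigma_n$ against separate thresholds $\delta_1$ and $\delta_2(n)$. You instead invert the logic: fix $\sigma_n$, use the affine dependence of $\phi_T',\psi_Q'$ on $T,Q$ to read off $T_n,Q_n$ explicitly, and then pass to the self-similar variable $s=\sigma_n/t$, under which the whole family collapses to the clean polynomial limits $\widetilde\Phi(s)=\tfrac{c^2}{b}s^2(2-s^2)$, $\widetilde\Psi(s)=\tfrac{2c}{3}s(s^2-3)$. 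This buys you a unified treatment of all four items---positivity, monotonicity, the values at $\sigma_n$, and (after two one-line polynomial identities) the signs of $R_1,R_2$---and it makes transparent \emph{why} the construction works: the limiting profiles have a non-degenerate extremum precisely at $s=1$. The paper's approach is more computational but stays closer to the original $t$-variable. Both approaches need a separate argument for the far tail (your $s\to0^+$, the paper's $t\le\delta_1$), and your splitting into $[\sigma_n/\delta,\sigma_n]$ (compact in $s$) and $(-\infty,\sigma_n/\delta]$ (where $r_{11}<0$, $r_{21}>0$ and the leading $1/t^2$, $-1/t$ monomials dominate) is the natural way to close that gap; just note that on the far piece the quadratic-in-$(T_n,Q_n)$ terms such as $T_n^2\ln^4(-t)/t^8$ must be estimated explicitly, as the paper does, since the $O(1)/t^5$ remainder in the displayed expansion of $R_1$ does \emph{not} absorb them.
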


\vspace{-4mm}

\begin{proof}  Take $\kappa_n \in [0.34, 0.98]\subset (1/3,1)$ and consider the sequences 
$T_n \to - \infty,$ $Q_n = -\kappa_n bT_n/c \to +\infty $.  It is easy to see that
$r_{11} <0$ and $r_{21} >0$ for all sufficiently large $n$.  

Now, it is clear that, for a given fixed interval $[z,-e]$, we have that $\phi_T(t) <0,$ $ t \in [z,-e]$
for all sufficiently large negative $T$. On the other hand, for each $T$, function $\phi_T$
is positive and strictly increasing on some interval $(-\infty, v]$. These simple observations show that to  every positive $\epsilon$ we can indicate $T_0<0$ such that, for each $T\leq T_0$, the functions 
$\phi_T, \phi'_T$ are positive on some maximal interval $(-\infty, \sigma_1(T))$ and $\phi_T(\sigma_1)<\epsilon/2, \ \phi'_T(\sigma_1)=0$.    The equation $\phi'_T(\sigma_1)=0$ can 
be written as $T= \Gamma_1(\sigma)$ with $\Gamma_1$ satisfying 
\begin{equation}\label{Ga}
\Gamma_1(\sigma_1):=-\frac{c^2}{b}\frac{\sigma_1^2}{\ln^2(-\sigma_1)}(1+o(1)),  \ \sigma_1 \to -\infty, 
\end{equation}
and strictly increasing on some maximal interval $(-\infty, \gamma_1(c,b,h)]$. Hence, 
we see that $\sigma_1 = \sigma_1(T) $ depends continuously on $T$ and monotonically converges to $-\infty$ as $T\to -\infty$.  

Furthermore, since the equation $
T= \Gamma_1(\sigma_1)$ has only one root $\sigma_1 \in (-\infty, \gamma_1(c,b,h)]$, we 
may suppose that   $\phi'_T(\sigma)<0$ for all $\sigma \in (\sigma_1, \sigma_1 +ch]$. 

Using (\ref{Ga}) and the monotonicity of $\Gamma_1$,  one can readily establish that 
$$
\sigma_1(T)= - \frac{\sqrt{-Tb}\ln(-T)}{2c}(1+o(1)), \quad T \to -\infty. 
$$
Similarly,  there is $Q_0>0$ such that, for each $Q\geq Q_0$, the functions 
$\psi_Q, \psi'_Q$ are positive on some maximal interval $(-\infty, \sigma_2(Q))$ and $\psi_Q(\sigma_2)<\epsilon/2, \ \psi'_Q(\sigma_2)=0$.    Equation $\psi'_Q(\sigma_2)=0$ can 
be written as $Q= \Gamma_2(\sigma_2)$ where 
$$
\Gamma_2(\sigma_2)= \frac{2c}{3}\frac{\sigma_2^2}{\ln^2(-\sigma_2)}(1+o(1)),  \ \sigma_2 \to -\infty, 
$$
strictly decreases on some maximal interval $(-\infty, \gamma_2(c,b,h)]$.  From this we deduce that $\sigma_2 = \sigma_2(Q) $ depends continuously on $Q$ and monotonically converges to $-\infty$ as $Q\to +\infty$.  Also  we may suppose that $\psi'_Q(\sigma)<0$ on $(\sigma_2, \sigma_2+ch]$. Next,   we have that 
$$
\sigma_2(Q_n)= -\sqrt{\frac{3Q_n}{8c}}(\ln Q_n)(1+o(1)) =  \sigma_1(T_n)\sqrt{\frac{3\kappa_n}{2}}(1+o(1)), \ n \to +\infty,
$$
and  since $3\kappa_n/2 \in [0.51,1.47]$, it is always possible to choose $\kappa_n$ in such a way that
 $\sigma_1(T_n)= \sigma_2(Q_n):= \sigma_n$ for all large $n$.  Obviously, $\kappa_n \to 2/3$.

Next, taking $Q=Q_n, \  T=T_n$, we find that for some  functions $\alpha_j, \ \beta_j$, uniformly on $n$ satisfying $\alpha_j(t) = o(1),$ $ \beta_j(t) = O(1),  \ t \to -\infty,$ it holds
$$
R_1(t)=  (2cT_n(1-\kappa_n+\alpha_1(t))+\beta_1(t))\frac{\ln^2(-t)}{t^5}  - \frac{\kappa_nbc^{-1}T^2_n}{t^7}\ln^4(-t)(1+\alpha_2(t)).  
$$
In this way, we prove  the existence of $\delta_1 <-e$ which does not depend on $n$  and such that $R_1(t) <0$  for all $t$ from some fixed interval $(-\infty, \delta_1]$.   Thus we may assume in the sequel that  $\sigma_n < \delta_1$. 

Analogously, we can use the representation
$$
R_2(t)= 
(bT_n(1-3\kappa_n)) +\alpha_3(t))\frac{\ln^2(-t)}{t^4} + \frac{\kappa_nb^2c^{-1}T^2_n}{t^7}\ln^4(-t)(1+\alpha_4(t)),
$$
to establish that $R_2$ is positive on some maximal interval $(-\infty, \delta_2)$, where $\delta_2=\delta_2(n)$ depends on $n$, $\lim \delta_2(n) = - \infty$ and $R_2(\delta_2(n))=0$. Analyzing the latter equation, we find that there is a  sequence $b_n \to b$ such that 
$$
\frac{3c}{2b_n}= \frac{T_n\ln^2(-\delta_2(n))}{\delta^3_2(n)}, \  {\rm and \ therefore \ } \  \delta_2(n) = \sqrt[3]{\frac{2bT_n}{27c}}\ln^{2/3}(-T_n)(1+o(1)). 
$$
Again, we have that $\sigma_n < \delta_2(n) $ for all large $n$, so that, without restricting the generality, we may suppose that both $R_1(t),R_2(t)$ are positive on $(-\infty, \sigma_n]$.   \hfill $\square$
\end{proof}
\begin{remark} \label{post}  For $r=1$ and small positive $A$, we will define 
$\phi_A, \psi_A$ by 
$$
\phi_A(t) = \phi_T(t-A^{-1}), \ \psi_A(t) = \psi_Q(t-A^{-1}),
$$
where sufficiently large $T,Q$ are chosen as in Lemma \ref{horo}.  
It is clear that for
every $\tau_0 \in {\mathbf R} $ there exists $A_0 >0$ such that
$
\phi_A(t) > 0, \psi_A(t) >0, \phi_A'(t) > 0, \psi_A'(t) >0
$
for all $t \leq \tau_0, \ A \in (0,A_0]$. In addition, $ \lim_{A\to
0+}(\phi^{(k)}_A(t),\psi^{(k)}_A(t))=(0,0), \ k =0,1,2,$ uniformly on $(-\infty,\tau_0]$. 
\end{remark}
Now we are in the position to prove Theorem \ref{7}.
\begin{proof} 
Consider the  functions
$$
\Phi_+(t,A)= \min\{1,\phi_A(t)+ \phi_+(t)\}, \ \Psi_+(t,A)=\min\{
1,\psi_A(t)+ \psi_+(t)\}, 
$$
where $\phi_A, \psi_A$ are defined in Remark \ref{post} if $r=1$ 
and  in Lemma \ref{fa} for $r \in (0,1)$. 
By Remarks \ref{pos}, \ref{post} and the implicit function theorem, there
exist smooth functions $\iota_1(A), \iota_2(A), $ such that $
\lim_{A\to 0+}\iota_1(A)=t_1, \lim_{A\to 0+}\iota_2(A)=t_2$ and
$$
\Phi_+(t,A), \Phi_+'(t,A) >0, \ t < \iota_2(A),\ {\rm with \ }
\Phi_+(t,A)=1,\ t \geq \iota_2(A),
$$
$$
\Psi_+(t,A), \Psi_+'(t,A) >0, \ t < \iota_1(A),\ {\rm with \ }
\Psi_+(t,A)=1,\ t \geq \iota_1(A),
$$
$$
\Phi'_+(\iota_2(A)-,A)> \Phi'_+(\iota_2(A)+,A)=0, \
\Psi'_+(\iota_1(A)-,A)> \Psi'_+(\iota_1(A)+,A)=0.
$$
We claim that for $t \not= \iota_1(A), \iota_2(A), d_1, \dots d_M$, and for
sufficiently small positive $A$, the functions
$\Phi_+(t):=\Phi_+(t,A), $ \ $\Psi_+(t):=\Phi_+(t,A)$ satisfy the
system
\begin{equation}\label{uspdd}\left\{
\begin{array}{lll}
\Phi_+''(t) - c\Phi_+'(t) + \Phi_+ (t)(1 - r- \Phi_+(t)+ r\Psi_+(t))
\leq 0, &    \\
\Psi_+''(t)- c\Psi_+'(t) +b \Phi_+(t-ch)(1-\Psi_+(t)) \leq 0, \
 & \\
 \Phi'_+(d_j-) > \Phi'_+(d_j+), \quad \Psi'_+(d_j-) > \Psi'_+(d_j+). 
 &
\end{array}%
\right.
\end{equation}
Since differential inequalities (\ref{uspdd}) hold trivially for  all $t \geq
\iota^*:=\max\{\iota_1(A), \iota_2(A)\}$ (when $\Phi_+(t)= \Psi_+(t)
=1$), it suffices to prove (\ref{uspdd})  for $t \in (-\infty,
\iota^*)$.  We will consider the following three cases.

\noindent \underline{Case I}. Let $t \leq \iota_*:=\min\{\iota_1(A),
\iota_2(A)\}$, then by Lemmas \ref{fa}, \ref{horo},   for all small $A>0$,
\begin{eqnarray}&&\nonumber
\Psi_+''(t) - c\Psi_+'(t) +b \Phi_+(t-ch)(1-\Psi_+(t))\leq  \psi_+''(t)
- c\psi_+'(t) +b \phi_+(t-ch)(1-\psi_+(t))   \\ \label{nonu}
&& \hspace{+2cm} 
- b (\phi_+(t-ch)\psi_A(t) + \phi_A(t-ch)\psi_+(t)) < 0,
\end{eqnarray}
due to assumption {\bf D2} of Definition \ref{defo} and the positivity of $\phi_+,\psi_A,\phi_A, \psi_+$. 
In a similar way  (but this time using  assumption {\bf D1})  we can evaluate $\Gamma$ defined by 
\begin{eqnarray}  \label{11}
\nonumber   \Gamma:&=&  \Phi_+''(t) - c \Phi_+'(t) +
\Phi_+(t) (1 - r - \Phi_+(t)+ r\Psi_+(t))  \\  \nonumber 
&=&\phi_+''(t) - c \phi_+'(t) + \phi_+(t) (1 -
r - \phi_+(t)+ r\psi_+(t)) + \phi_A''(t) - c \phi_A'(t) 
     \\  \nonumber  &+&
\phi_A(t) (1 - r - \phi_A(t)+
r\psi_A(t)) -2\phi_A(t)\phi_+(t)+ r\phi_A(t)\psi_+(t)+ r\phi_+(t)\psi_A(t). 
\end{eqnarray}
If $r \in (0,1)$ then we obtain 
\begin{eqnarray*}
 \Gamma=
 AO(e^{(\lambda+\nu) t})+
A^2O(e^{\lambda(k+1) t})+\phi_+''(t) - c \phi_+'(t) +
\phi_+(t) (1 - r - \phi_+(t)+ r\psi_+(t)),  
\end{eqnarray*}
and if $r =1$  then 
\begin{eqnarray*}
 \Gamma=
 O(\frac{(-t)^me^{\nu t}}{t-A^{-1}})+R_1(t-A^{-1})+\phi_+''(t) - c \phi_+'(t) +
\phi_+(t) ( - \phi_+(t)+ \psi_+(t)).  
\end{eqnarray*}
In each of these two cases, for some small $A_0$, we obtain $\Gamma \leq C(-t)^me^{\nu t}(\chi(\nu,c) +o(1)),$ $ A \in (0,A_0], $ $ t \to
 -\infty.$ Thus  there exists $\tau_* < i_*$ such that  $\Gamma$ is negative for all $t \leq \tau_*$ uniformly on $A \in
(0,A_0]$. On the other hand, since
$\lim_{A\to 0}\min\{\iota_1(A), \iota_2(A)\} = \min\{t_1, t_2\},$
\noindent we deduce from {\bf D2} and the above asymptotic representation of $\Gamma$  the existence of
$A_1 \in (0,A_0)$ such that $\Gamma$ is negative for all $t \in [\tau_*, i_*], \
A \in (0,A_1]$. Thus (\ref{uspdd})  holds for $t \in (-\infty,
\iota_*]$ and sufficiently small  $A \in (0,A_1]$.

\vspace{2mm}

\noindent \underline{Case II}. Suppose now that  $\iota_1(A) >  \iota_2(A)$ and let $t
\in [\iota_*, \iota^*]= [\iota_2(A),\iota_1(A)]$. We have
$$\Phi_+''(t) - c \Phi_+'(t) +
\Phi_+(t) (1 - r - \Phi_+(t)+ r\Psi_+(t)) = - r(1 - \Psi_+(t))\leq
0,$$ and, for sufficiently small $A$,
$\Psi_+''(t) - c\Psi_+'(t) +b \Phi_+(t-ch)(1-\Psi_+(t))=
$
\vspace{0mm}
$$ \left\{
\begin{array}{ll}
\Upsilon:=  \Psi_+''(t) - c\Psi_+'(t)
+b(\phi_A(t-ch)+ \phi_+(t-ch)) (1-\Psi_+(t)) <0,
    &   t \in  [\iota_*, \iota_*+ch]; \\ \psi_+''(t) - c\psi_+'(t) + b (1-\psi_+(t)) + \psi_A''(t) - c\psi_A'(t)- b\psi_A(t) <0,
    &   t \in  [\iota_*+ch, \iota^*].
\end{array}%
\right.
$$
 Here we recall that  $\psi_+''(t)
- c\psi_+'(t) +b (1-\psi_+(t))$ is negative on $[t_2+ch,t_1]$ due to assumption {\bf D4}.  On the other hand, by the same assumption, we have that,  for $ t \in  [\iota_*, \iota_*+ch]$ and all small $A$, 
\begin{eqnarray}&&\nonumber
\Upsilon= \psi_+''(t)
- c\psi_+'(t) +b\phi_+(t-ch)(1-\psi_+(t))   - b (\phi_+(t-ch)\psi_A(t) + \phi_A(t-ch)\psi_+(t))
\\  \nonumber
&& + \psi_A''(t)- c\psi_A'(t) +b\phi_A(t-ch)(1-\psi_A(t)) <0. \hspace{+0cm} 
\end{eqnarray}

\vspace{0mm}

\noindent \underline{Case III}. Similarly, if  $\iota_1(A) <
\iota_2(A)$ then  for $t \in [\iota_1(A),  \iota_2(A)]$, we obtain
\begin{eqnarray}&&\nonumber
\  \Phi_+''(t) - c \Phi_+'(t) +
\Phi_+(t) (1 - r - \Phi_+(t)+ r\Psi_+(t))  =
\Phi_+''(t) - c \Phi_+'(t) +
\Phi_+(t) (1 - \Phi_+(t))  \\ \nonumber
&=& \hspace{+0cm} 
\phi_+'' - c \phi_+' + \phi_+ (1 -  \phi_+) + \phi_A'' - c \phi_A' +
\phi_A (1 - \phi_A) -2\phi_A\phi_+ <0,
\end{eqnarray}
for all small $A$. Additionally, 
$\Psi_+''(t) - c\Psi_+'(t) +b \Phi_+(t-h)(1-\Psi_+(t))=0.
$
Since  $\Phi'_+(d_j-,A) > \Phi'_+(d_j+,A),\  \Psi'_+(d_j-,A) > \Psi'_+(d_j+,A)$ are obviously true  for all small positive $A$,  inequalities (\ref{uspdd}) are proved for small $A>0$. 

\vspace{1mm}

So let us fix some small $A^*>0$ such that $\Phi_+(t):=\Phi_+(t,A^*), $ \ $\Psi_+(t):=\Phi_+(t,A^*)$ satisfy (\ref{uspdd}). In the continuation, we  will prove the existence of  lower solutions $\Psi_-, \Phi_-: \mathbf{R} \to [0,1)$, which are defined as smooth non-decreasing functions satisfying the following system: 
\begin{equation}\label{lspdd}\left\{
\begin{array}{lll}
\Phi_-''(t) - c\Phi_-'(t) + \Phi_- (t)(1 - r- \Phi_-(t)+ r\Psi_-(t))
\geq 0, &    \\
\Psi_-''(t)- c\Psi_-'(t) +b \Phi_-(t-ch)(1-\Psi_-(t)) \geq 0, \
 & \\
 \Phi_+(t,A^*) >  \Phi_-(t) \ {\rm and \ }\Psi_+(t,A^*) >  \Psi_-(t),\
t \in {\mathbf R}.
 &
\end{array}%
\right.
\end{equation}

 We will treat separately  each of  the following cases: $r \in (0,1)$ and $r=1$.   

 \underline{Suppose that  $r\in (0,1)$.}  It follows from the definition of $\Phi_+(t,A^*),  $ $ \Psi_+(t,A^*)$  that  for some positive $k_1=k_1(A^*), \ k_2=k_2(A^*) $,
$$
\Phi_+(t,A^*) \geq k_1 e^{\lambda t}, \ \Psi_+(t,A^*) \geq k_2
e^{\lambda t}, \ t \leq 0. 
$$
Set now
$ \Psi_-(t) \equiv 0,$ and define  $\Phi_-(t), \ t \in {\mathbf R},$ as a unique (up to a
translation) traveling front solution of the KPP-Fisher equation
$$
\phi''(t) - c\phi'(t) + \phi(t) (1 - r- \phi(t)) = 0,\
\phi(-\infty)=0, \ \phi(+\infty) = 1-r >0.
$$
It is well known  \cite{GT} that $\Phi_-(t)$ is strictly
increasing and that
 $\Phi_-(t+s_0) = 0.5k_1(A^*)e^{\lambda t}+ O(e^{(\lambda+\delta) t}), $ $ t \to
-\infty,$ for some small positive $\delta$ and  for  an appropriate shift $s_0=s_0(A^*)$ 
which can be supposed to be  zero.  Hence, as a consequence of all mentioned properties of $\Phi_\pm,
\Psi_\pm, \ r \in (0,1)$, without restricting the generality, we may further assume that
the third inequality in (\ref{lspdd}) is also satisfied.

\underline{Let now $r=1$.}   
For sufficiently large $n$ (such that $\phi_{T_n}(\sigma_n) <1, \ \psi_{Q_n}(\sigma_n) <1$), we consider the following $C^1$-smooth increasing functions 
$$ \Phi_-(t,n) := \left\{
\begin{array}{ll}
    \phi_{T_n}(t+\sigma_n),
    &   t  \leq {0}, \\
   \phi_{T_n}(\sigma_n)  &   t \geq 0,
\end{array}%
\right.   \Psi_-(t,n) := \left\{
\begin{array}{ll}
    \psi_{Q_n}(t+\sigma_n),
    &   t  \leq {0}, \\
   \psi_{Q_n}(\sigma_n)  &   t \geq 0.
\end{array}%
\right.  
$$
Lemma \ref{ojala} then implies that, for all $t \in {\mathbf R}$, 
$$
\begin{array}{ll}
\Phi_-''(t,n) - c\Phi_-'(t,n) + \Phi_- (t,n)(\Psi_-(t,n)-\Phi_-(t,n))
> 0, &    \\
\Psi_-''(t,n)- c\Psi_-'(t,n) +b \Phi_-(t-ch,n)(1-\Psi_-(t,n)) >0. \
 &
\end{array}
$$
Take now $n$ sufficiently large to have $T_n <T, Q_n > Q$ and $\sigma_n < -(A^*)^{-1}$. Then 
$$
\phi_{A^*}(t)= \phi_T(t-(A^*)^{-1}) > \phi_{T_n}(t+\sigma_n), \ \psi_{A^*}(t)= \psi_Q(t-(A^*)^{-1}) > \phi_{Q_n}(t+\sigma_n),  t \leq 0. 
$$ 
As a consequence, 
\begin{eqnarray*}
\Phi_+(t,A^*)&=& \min\{1,\phi_{A^*}(t) + \phi_+(t)\} > \phi_{T_n}(t+\sigma_n)= \Phi_-(t,n), \\
\Psi_+(t,A^*)&=& \min\{1,\psi_{A^*}(t) + \psi_+(t)\} > \psi_{T_n}(t+\sigma_n)= \Psi_-(t,n), \ t \in \mathbf{R}.
\end{eqnarray*}

In order to finalize the proof of Theorem \ref{7}, for a fixed  negative number $B \leq -(1+r+b)$, we
consider  nonlinear operators
$$
      {\mathcal F}_1(\phi, \psi)(t)=   \phi(t) (1 - r- B-\phi(t)+ r\psi(t)), \quad
     {\mathcal F}_2(\phi, \psi)(t) = b \phi(t-ch)(1-\psi(t)) - B\psi(t). 
$$
It is easy to check that  ${\mathcal F}_1, {\mathcal F}_2$  are
monotone in the sense that ${\mathcal F}_j(\phi_1, \psi_1)(t) \leq
{\mathcal F}_j(\phi_2, \psi_2)(t),$ $  \ t \in {\mathbf R},$ if $0\leq
\phi_1(t) \leq \phi_2(t)\leq 1, \  0\leq \psi_1(t) \leq
\psi_2(t)\leq 1, \ t \in {\mathbf R} $.
 Let $z_1 <0< z_2$ be the real roots
of the equation $z^2-cz+B=0$.
Then  every bounded solution $(\phi, \psi)$ of differential equations in  (\ref{3ade}) should satisfy the system of integral equations
\begin{equation}\label{pfoin}
\phi(t)={\mathcal N}_1(\phi, \psi)(t), \  \psi(t)= {\mathcal
N}_2(\phi, \psi)(t), \quad {\rm where}
\end{equation}
$$
{\mathcal N}_j(\phi, \psi)(t):=\frac{1}{z_2 - z_1}
\left(\int^t_{-\infty} e^{z_1 (t-s)}{\mathcal F}_j(\phi, \psi)(s)ds + \int_t^{+\infty}e^{z_2
(t-s)}{\mathcal F}_j(\phi, \psi)(s)ds\right). 
$$
Conversely, each positive strictly monotone bounded solution $(\phi,
\psi)$ of  (\ref{pfoin}) yields a wavefront for 
(\ref{3ade}). It is clear that the operators ${\mathcal N}_j$  are also monotone.  Additionally, it is easy to see that ${\mathcal N}_j(\phi, \psi)(t)$ is increasing if both $\phi, \psi : {\bf R} \to [0,1]$ are increasing functions.

Hence, taking into account (\ref{uspdd}), (\ref{lspdd})  and Lemma
\ref{imp}, we conclude that  
\begin{eqnarray*}
\Phi_-(t) &\leq & \Phi_-^{(1)}(t):= {\mathcal N}_1(\Phi_-,
\Psi_-)(t) \leq  {\mathcal N}_1(\Phi_+,
\Psi_+)(t):= \Phi_+^{(1)}(t)  \leq  \Phi_+(t), 
 \\
\Psi_-(t)  &< &\Psi_-^{(1)}(t):= {\mathcal N}_2(\Phi_-,
\Psi_-)(t)  \leq {\mathcal N}_2(\Phi_+,
\Psi_+)(t):= \Psi_+^{(1)}(t) \leq \Psi_+(t).
\end{eqnarray*}
Therefore the sequences of positive uniformly bounded (by $0$ from below  and by $1$ from above) monotone continuous functions 
\begin{equation} \label{apli} \hspace{7mm}
 \Psi_-^{(n+1)}(t) =  {\mathcal N}_2(\Phi_-^{(n)}, \Psi_-^{(n)})(t), \ \Phi_-^{(n+1)}(t) =  {\mathcal N}_1(\Phi_-^{(n)}, \Psi_-^{(n)})(t), \  n =1,2, \dots,
\end{equation}

\vspace{-7mm} 

$$
\hspace{-7mm} {\rm and} \ \   \Psi_+^{(n+1)}(t) =  {\mathcal N}_2(\Phi_+^{(n)}, \Psi_+^{(n)})(t), \ \Phi_+^{(n+1)}(t) =  {\mathcal N}_1(\Phi_+^{(n)}, \Psi_+^{(n)})(t), \  n =1,2, \dots,
$$
are strictly increasing and decreasing, respectively.  Set $\Phi = \lim \Phi_-^{(n)}, \
\Psi = \lim \Psi_-^{(n)}$, then 
\begin{equation} \label{zer}
\Phi_- \leq  \Phi \leq \Phi_+, \ \Psi_-\leq \Psi \leq  \Psi_+.
\end{equation}
Furthermore, a direct application of the Lebesgue's dominated convergence theorem to (\ref{apli})
shows that the pair $(\Phi, \Psi)$  solves system (\ref{pfoin}). Since $\Phi(t) >0$ for all $t$, we may conclude from  (\ref{pfoin}) that $\Psi(t) >0,$ $t \in {\mathbf R}$.  Note also that $\Phi(-\infty) = \Psi(-\infty)=0$   in virtue of (\ref{zer}).  Now, since $\Phi, \Psi$ are positive, increasing and bounded functions,  the values of $\Phi(+\infty), \Psi(+\infty)$ are finite and positive.  A standard argument based on the Barbalat lemma (cf. \cite{wz}) shows that $\Phi(+\infty) = \Psi(+\infty)=1$.   

Finally, the  validity of asymptotic formula (\ref{a1}) follows from (\ref{zer}).  To prove (\ref{a2}), we first observe that, due to  (\ref{zer}) and Theorem \ref{mda}, there exists $T_b <0$ such that $\Phi(t)$ and $\Psi(t)$ are bounded (from below and from above) by $c_ie^{\lambda t}$ for some $c_i >0$ and all $t\leq T_b$. Then we can apply Proposition 7.2 from \cite{MP} to the first equation of (\ref{3ade}) in order to obtain the desired formula for $\Phi(t)$.  Using this formula and the change of variables $y =\psi - be^{\lambda(t-ch)}/(1-r)$,  we then 
get easily the second formula of (\ref{a2}), cf. the proof of Lemma \ref{mainas} and Corollary \ref{posl}.   \hfill $\square$
\end{proof}

\subsection{Proof of Theorem \ref{main1} } \label{4.2}
The simplest form of  regular super-solutions $\phi_+,\psi_+$ is exponential,
we can write them as
\begin{equation} \label{fp}
\phi_+(t)= e^{\nu t}, \ \psi_+(t)=De^{\nu t} , \  D =be^{-\nu
ch}/(c\nu-\nu^2),
\end{equation}
where $D$ is chosen in such a way that the second inequality
 in {\bf D2} as well as {\bf D4} were satisfied
(details are given below).  In order to simplify the notation,
in the sequel we will write $b'=be^{-\nu ch}$.  
\vspace{-5mm}
\begin{lemma} \label{lii} Suppose that $\nu \not= j\lambda$ is
close to $c/2, r >0,$ and
\begin{equation}
\label{arti}
(c\nu - \nu^2)(1+ 1/b') > 1, \ c\nu - \nu^2 \not= b'.
\end{equation}
Then (\ref{fp})  determines a regular super-solution for (\ref{3ade}).
\end{lemma}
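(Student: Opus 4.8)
The plan is to verify, in turn, the requirements $\mathbf{D1}$--$\mathbf{D4}$ of Definition \ref{defo} for the pair $(\psi_+,\phi_+)$ given by (\ref{fp}). Put $w:=c\nu-\nu^2=\nu(c-\nu)$; since $\nu$ is close to $c/2\in(0,c)$ we have $w>0$, hence $D=b'/w>0$ and $\phi_+(t)=e^{\nu t}$, $\psi_+(t)=De^{\nu t}$ are positive, strictly increasing and $C^\infty$ on $\mathbf{R}$, with $\phi_+(-\infty)=\psi_+(-\infty)=0$. Set $t_2:=0$ and $t_1:=-\nu^{-1}\ln D$, so that $\phi_+(t_2)=\psi_+(t_1)=1$ and the jump set $\mathcal D$ of the derivatives is empty (so the one-sided conditions at the $d_j$ are vacuous). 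As $c^2>4(1-r)$ the roots $\lambda<\mu$ of $\chi(\cdot,c)$ are real with midpoint $c/2$, so for $\nu$ near $c/2$ one gets $\nu\in(\lambda,\mu)$ and $\chi(\nu,c)<0$; consequently $\mathbf{D1}$ holds with $m=0$, $C_1=1$, since $(\phi_+,\phi_+',\phi_+'')(t)=e^{\nu t}(1,\nu,\nu^2)$ identically and $\psi_+(t)=De^{\nu t}=O(te^{\nu t})$ as $t\to-\infty$.

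The computational core is the substitution of the exponentials into $\Lambda_1,\Lambda_2$. A direct calculation gives
\[
\Lambda_1(\phi_+,\psi_+)(t)=e^{\nu t}\bigl(\chi(\nu,c)+(rD-1)e^{\nu t}\bigr),\qquad
\Lambda_2(\phi_+,\psi_+)(t)=(b'-Dw)e^{\nu t}-b'De^{2\nu t}=-b'De^{2\nu t},
\]
the cancellation $b'-Dw=0$ in $\Lambda_2$ being precisely the reason for the choice $D=b'/(c\nu-\nu^2)$ in (\ref{fp}). Thus $\Lambda_2(\phi_+,\psi_+)<0$ on all of $\mathbf{R}$, while, the bracket in $\Lambda_1$ being affine in $e^{\nu t}$ with negative value $\chi(\nu,c)$ at $e^{\nu t}=0^{+}$, condition $\mathbf{D2}$ (which demands negativity only for $t\le\min\{t_1,t_2\}$, i.e.\ $e^{\nu t}\le\min\{1,1/D\}$) reduces to inspecting the other endpoint. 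If $c\nu-\nu^2>b'$ (so $D<1$, $\min\{1,1/D\}=1$) the bracket at $e^{\nu t}=1$ equals $\chi(\nu,c)+rD-1=-w-r(1-D)<0$; if $c\nu-\nu^2<b'$ (so $D>1$, $\min\{1,1/D\}=1/D$) the bracket at $e^{\nu t}=1/D$ equals $1-(c\nu-\nu^2)(1+1/b')$, negative exactly by the first inequality in (\ref{arti}). The borderline case $c\nu-\nu^2=b'$, i.e.\ $D=1$, is excluded by the second condition in (\ref{arti}).

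It remains to treat $\mathbf{D3}$ and $\mathbf{D4}$, and the sign of $D-1$ decides which is active. If $D>1$, so $t_1<t_2$, then $\mathbf{D3}$ concerns $t\in[t_1,0]$, where $e^{\nu t}\in[1/D,1]$: here $\phi_+''(t)-c\phi_+'(t)+\phi_+(t)(1-\phi_+(t))=e^{\nu t}\bigl((1-w)-e^{\nu t}\bigr)$, which is negative throughout the interval iff $1/D>1-w$, again equivalent to the first inequality in (\ref{arti}). If $D<1$, so $t_1>t_2$, then $\mathbf{D4}$ concerns $t\in[0,t_1]$, where $1-\psi_+(t)\ge0$; hence replacing $\phi_+(t-ch)$ by $\min\{1,\phi_+(t-ch)\}\le\phi_+(t-ch)$ can only decrease the left-hand side of $\mathbf{D4}$, which is thereby bounded above by $\Lambda_2(\phi_+,\psi_+)(t)=-b'De^{2\nu t}<0$ (this is exactly the mechanism of Remark \ref{upro}). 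This establishes $\mathbf{D1}$--$\mathbf{D4}$, so $(\psi_+,\phi_+)$ is a regular super-solution. No delicate estimate is involved; the only point requiring care is the case split on the sign of $c\nu-\nu^2-b'$ and the observation that in each branch the binding inequality is precisely (\ref{arti}) --- elementary algebra with the quadratic $\chi(\cdot,c)$ and the constant $D$. (The hypothesis $\nu\ne j\lambda$ plays no role in this verification; it is imposed only so that (\ref{fp}) may later be substituted into Theorem \ref{7}.)
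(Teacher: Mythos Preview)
Your proof is correct and follows essentially the same approach as the paper's: a direct verification of $\mathbf{D1}$--$\mathbf{D4}$ by substituting the exponentials and splitting into the cases $D<1$ and $D>1$. Your treatment of $\mathbf{D4}$ is in fact slightly cleaner than the paper's --- the paper splits $[0,t_1]$ into $[0,ch]$ and $[ch,t_1]$ and computes separately, whereas you invoke the bound $\min\{1,\phi_+(t-ch)\}\le\phi_+(t-ch)$ (the mechanism of Remark~\ref{upro}) to dominate the $\mathbf{D4}$ expression by $\Lambda_2(\phi_+,\psi_+)(t)=-b'De^{2\nu t}<0$ in one stroke.
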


\vspace{-4mm}

\begin{proof} Clearly,  {\bf D1} is satisfied with $C_1=1,$ 
$
t_1 = \nu^{-1}\ln[(c\nu-\nu^2)/b'] \not=0, \ t_2=0.
$
Still we have to check hypotheses {\bf D2, D3, D4}. Depending on the
sign of $t_1$, we will analyze the next two cases:

\underline{Case I.}\quad $t_1 >0=t_2$ or, equivalently,  $ 0<
b'/(c\nu-\nu^2) < 1$. If $t \in [0,t_1]$, then {\bf D4} holds
because
$$
\psi_+''(t) - c\psi_+'(t) +b \min\{1, \phi_+(t-ch)\}(1-\psi_+(t)) = \left\{
\begin{array}{ll}
    -b'e^{\nu t}\psi_+(t) <0,
    &   t \in  [0, ch]; \\  b\left(1- (1+ \frac{b}{c\nu-\nu^2})e^{\nu (t-ch)}\right)<0,
    &   t \in  [ch, t_1].
\end{array}%
\right.
$$
If $t\leq 0$, we have that
\begin{eqnarray} \label{vzhe}
\hspace{-7mm} &&\psi_+''(t) - c\psi_+'(t) +b \phi_+(t-ch)(1-\psi_+(t))=
-b'De^{2\nu t}<0; \nonumber \\
\hspace{-7mm}  && \phi_+''(t) - c\phi_+'(t) + \phi_+(t) (1 - r - \phi_+(t)+
r\psi_+(t))  = e^{\nu t}\{\chi(\nu,c)- \phi_+(t)+ r\psi_+(t)\} < 0. 
\end{eqnarray}

\underline{Case II.}  Next, let $t_1  < t_2=0$ so that
$b'/(c\nu-\nu^2) > 1$. If $t \in[t_1,0]$  then
\begin{eqnarray*}
&& \phi_+''(t) - c\phi_+'(t) +
\phi_+(t) (1 - \phi_+(t)) =
e^{\nu t}(1-e^{\nu t}+
\nu^2-c\nu)  \\
&\leq &
e^{\nu t}(1-e^{\nu t_1}+ \nu^2-c\nu)= e^{\nu t}(1-(c\nu- \nu^2)(1+
1/b')) <0,
\end{eqnarray*}
and {\bf D3} holds. Now, for $t \leq t_1$, condition {\bf D2} is
true since
\begin{eqnarray} \label{obs} 
\phi_+''(t) - c \phi_+'(t) + \phi_+(t) (1 - r - \phi_+(t)+
r\psi_+(t)) &=&\\ \nonumber 
e^{\nu t}(\chi(\nu,c)+e^{\nu t}(- 1+ rb'/(c\nu- \nu^2)) \leq e^{\nu
t}\max\{\chi(\nu,c), 1 - (c\nu - \nu^2)(1+ 1/b')\}&<&0;\\ \nonumber 
\psi_+''(t) - c\psi_+'(t) +b \phi_+(t-ch)(1-\psi_+(t))=
-b'De^{2\nu t}&<&0. 
\end{eqnarray}
This completes the proof of the lemma.  \hfill $\square$
\end{proof}
\begin{corollary}\label{difi}
The existence statement of Theorem \ref{main1} holds true.
\end{corollary}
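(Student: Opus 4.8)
The plan is to deduce the existence part of Theorem~\ref{main1} from Theorem~\ref{7} by exhibiting, for each admissible $c$, a regular super-solution of (\ref{3ade}); for $c>c_\#$ I would take it in the explicit exponential form (\ref{fp}) provided by Lemma~\ref{lii}, so that the whole question reduces to checking hypothesis (\ref{arti}) for a well-chosen exponent $\nu\in(\lambda,\mu)$.

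So fix $c>c_\#$ and write $g(\nu):=(c\nu-\nu^2)(1+e^{\nu ch}/b)$; the first inequality in (\ref{arti}) is then exactly $g(\nu)>1$. The key elementary point I would establish is that $g(c/2)>1$. Indeed, since the right-hand side of (\ref{mins}) is non-increasing in $c$ while its left-hand side is increasing, $c>c_\#$ forces $c>2\sqrt{b''}/\sqrt{1+b''}$ with $b'':=be^{-c^2h/2}$; squaring and rearranging, this is immediate when $c^2\ge 4$ and, when $c^2<4$, it rewrites precisely as $g(c/2)=(c^2/4)(1+1/b'')>1$. Moreover $c>c_\#\ge 2\sqrt{1-r}$, so $c/2\in(\lambda,\mu)$ and, by continuity of $g$, one has $g>1$ on a small interval around $c/2$ lying inside $(\lambda,\mu)$. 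Inside that interval I would pick $\nu$ (close to $c/2$, as Lemma~\ref{lii} asks) avoiding the finitely many values $j\lambda$ and the finitely many roots of $c\nu-\nu^2=be^{-\nu ch}$; this makes (\ref{arti}) hold, Lemma~\ref{lii} then yields a regular super-solution, and Theorem~\ref{7} produces a positive monotone wavefront of (\ref{1de}) at speed $c$ satisfying (\ref{a1}) when $r=1$ and (\ref{a2}) when $r\in(0,1)$, which is Theorem~\ref{main1} for $c>c_\#$.

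The remaining task, and the step I expect to be the hardest, is the boundary speed $c=c_\#$, where $g(c_\#/2)=1$ and the argument above stalls. When $c_\#>2\sqrt{1-r}$ and $h>0$ I would still run it, after noticing that $g'(c_\#/2)=(c_\#^3h/4)(e^{c_\#^2h/2}/b)>0$, so that $g(\nu)>1$ for $\nu$ slightly larger than $c_\#/2$ (and still below $\mu$); the rest is as before. In the remaining cases — $h=0$, or $c_\#=2\sqrt{1-r}$, where the exponential ansatz genuinely degenerates — I would pass to the limit: starting from fronts $(\phi_n,\psi_n)$ of (\ref{3ade}) at speeds $c_n\downarrow c_\#$ (available from the previous paragraph) normalised by $\phi_n(0)=1/2$, the equations together with $0<\phi_n,\psi_n<1$ bound $\phi_n',\psi_n',\phi_n'',\psi_n''$ uniformly, so an Arzel\`a--Ascoli/diagonal extraction gives a subsequence converging in $C^1_{\mathrm{loc}}(\R)$ to a non-decreasing solution $(\phi,\psi)$ of the differential system in (\ref{3ade}) with $c=c_\#$, $0\le\phi,\psi\le 1$ and $\phi(0)=1/2$; the normalisation excludes the trivial solution, and a Barbalat-type argument exactly as at the end of the proof of Theorem~\ref{7} (cf. \cite{TPTv1}) forces $(\phi,\psi)(-\infty)=(0,0)$ and $(\phi,\psi)(+\infty)=(1,1)$. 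The two delicate points are thus the bookkeeping turning $c>c_\#$ into $g(c/2)>1$, and the verification in the approximation step that the limiting pair is non-trivial and has the prescribed limits at $\pm\infty$.
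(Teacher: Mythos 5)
Your treatment of the supercritical speeds is fine and is essentially the paper's route: for $c>c_\#$ the verification that $\nu$ near $c/2$ satisfies (\ref{arti}) (your computation $g(c/2)=(c^2/4)(1+1/b'')>1$, with the finitely many exceptional values $j\lambda$ and roots of $c\nu-\nu^2=be^{-\nu ch}$ avoided by a small perturbation) is exactly the detail the paper leaves implicit before invoking Lemma~\ref{lii} and Theorem~\ref{7}; your extra observation that for $h>0$ and $c_\#>2\sqrt{1-r}$ one has $g'(c_\#/2)>0$, so the critical speed itself can sometimes be reached by a super-solution, is a small bonus not in the paper. The paper, like you, handles the remaining critical case $c=c_\#$ by taking fronts at speeds $c_j\downarrow c_\#$, normalizing, and passing to the limit (it does so through the integral system (\ref{pfoin}) and dominated convergence rather than Arzel\`a--Ascoli on the ODE, which is an immaterial difference).

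However, your limiting step has a genuine gap at the boundary condition at $-\infty$. You assert that ``a Barbalat-type argument exactly as at the end of the proof of Theorem~\ref{7} forces $(\phi,\psi)(-\infty)=(0,0)$'', but Barbalat-type reasoning only shows that the limits at $\pm\infty$ are equilibria of (\ref{3ade}), and the equilibria of the BZ system form a continuum: every pair $(0,a)$, $a\in[0,1]$, is a steady state (this degeneracy is precisely the feature emphasized in the introduction, and in the proof of Theorem~\ref{7} the value $\Psi(-\infty)=0$ is obtained from the squeezing (\ref{zer}) between sub/super-solutions, not from Barbalat). Your normalization $\phi_n(0)=1/2$ does give $\phi(-\infty)=0$ and, via the steady-state relations, $(\phi,\psi)(+\infty)=(1,1)$, but it leaves $\psi(-\infty)\in[0,1]$ completely undetermined: the limit could a priori connect $(0,a)$ with $a>0$ to $(1,1)$ and would then not be the desired wavefront. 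The missing ingredient, which is how the paper closes this case, is the uniform comparison between components from Theorem~\ref{mda}: for the approximating fronts one has $\psi_n(t)<K\phi_n(t)$ when $r\in(0,1)$ (part [A]) and $\psi_n^2(t)<M\phi_n(t)$ when $r=1$ (part [C]); these inequalities survive the passage to the limit and, together with $\phi(-\infty)=0$, force $\psi(-\infty)=0$. Without some such quantitative link between $\psi$ and $\phi$ near $-\infty$, your argument does not exclude the degenerate limit.
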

\begin{proof}
First, we assume that $c > c_\#$.  Then clearly there is a positive $\nu$ meeting all requirements 
of Lemma \ref{lii}. This assures the existence  of a regular super-solution for  (\ref{3ade}).  By Theorem 
\ref{7},  system (\ref{3ade}) has a positive monotone wavefront.  

Next, we consider the case when $c = c_\#, \ r \in (0,1]$. Let $c_j \downarrow c_\#$ be a strictly decreasing 
sequence of velocities and $(\phi_j,\psi_j)$ be a sequence of corresponding traveling fronts (existing in virtue of the 
first part of the proof).  Since 
$$
0 = \phi_j(-\infty)+\psi_j(-\infty) < \phi_j(t)+\psi_j(t) < \phi_j(+\infty)+\psi_j(+\infty) =2
$$ 
and the function $\phi_j(t)+\psi_j(t)$ is increasing in $t$ for each fixed $j$, we may assume that 
$
\phi_j(0)+\psi_j(0) = 3/2, \quad j =1,2,3, \dots
$
Using the standard compactness arguments and then applying the Lebesgue's dominated convergence theorem to the system of integral equations (\ref{pfoin}): 
$$
\phi_j(t)={\mathcal N}_1(\phi_j, \psi_j,c_j)(t), \  \psi_j(t)= {\mathcal
N}_2(\phi_j, \psi_j,c_j)(t), 
$$
we may assume, without restricting the generality,  
that $\lim_j(\phi_j,\psi_j) = (\hat\phi, \hat\psi)$ uniformly on bounded intervals, where $(\hat\phi, \hat\psi)$
is a monotone solution of (\ref{3ade}) with $c=c_\#$.  Since $(\hat\phi, \hat\psi)(\pm\infty)$ are steady state solutions of 
(\ref{3ade}) and 
$\hat\phi(-\infty)+  \hat\psi(-\infty) \leq \hat\phi(0)+  \hat\psi(0) =3/2 \leq \hat\phi(+\infty)+  \hat\psi(+\infty),$
we find that necessarily 
$$
\hat\phi(-\infty)=0, \ \hat\psi(-\infty) \in [0,1] , \ \hat\phi(+\infty) =  \hat\psi(+\infty) =1,
$$
(if  $\hat\phi(-\infty)>0$, then $\hat\psi(-\infty) = \hat\phi(-\infty) =1$ and thus
 $\hat\phi(0)+  \hat\psi(0) =2$, a contradiction).  To finish the proof of the corollary, we have to 
 establish that $\hat\psi(-\infty)=0$.  In order to prove this,  we can apply the part [A] (for $r \in (0,1)$) and the part [C] (when $r =1$)  of Theorem \ref{mda}
 to find that  either $\psi_j(t) < K \phi_j(t), \ t \in \mathbf{R}$ (for $r \in (0,1)$) or 
 $\psi_j(t) < \sqrt{M \phi_j(t)}, \ t \in \mathbf{R}$ (for $r =1$). Therefore either 
 $\hat \psi(t) \leq K \hat \phi(t), \ t \in \mathbf{R},$ 
 or  $\hat \psi(t) \leq \sqrt{M \hat \phi(t)}, \ t \in \mathbf{R},$
 so that $\hat\psi(-\infty)=0$.   \hfill $\square$
 \end{proof}
\subsection{Proof of Theorem \ref{main2}} \label{4.3a}
Let now $c < c_\#$ so that for some $\nu$ close to $c/2$ 
\begin{equation}\label{oz}
0<(c\nu - \nu^2)(1+ \frac{1}{b'}) \leq  1, \ c\nu - \nu^2 \not= b', \ \nu \not= j\lambda.
\end{equation}
 Analyzing the proof
of Lemma \ref{lii} under these assumptions,  we see that  $t_1<0=t_2$  and the main obstacle to develop successfully  the proof of Case II appears when we want to estimate expression  (\ref{obs}) near $t_1<0$.
Therefore we may expect that, after an appropriate modification
of super-solutions  (\ref{fp}) in some neighborhood of $t_1$,
the result of Theorem \ref{main1}  can be  improved.
Below, we develop this idea  by considering 
$\phi_+(t)=e^{\nu t}, \  t \in {\mathbf R}$, and $C^1-$ smooth function
\begin{equation}\label{pne}
\psi_+(t)= \left\{
\begin{array}{ll}
     De^{\nu t},
    &  {\rm if} \ t \leq t_*;  \\
      p +qt , & {\rm if}\  t > t_*.
\end{array}%
\right.
\end{equation}
Here $p, q, t_*$ will be chosen to satisfy the first inequality in {\bf D2} for all $t \in {\mathbf R}$.
The mentioned inequality can be written as
\begin{equation}\label{pp}
\psi_+ (t)< \gamma(t):=r^{-1}(c\nu-\nu^2+r-1 +e^{\nu t}).
\end{equation}
Now, assuming  (\ref{oz}) and analyzing the mutual positions
of convex graphs of the functions $\gamma(t)$ and $De^{\nu t}$, we  deduce that
these graphs should have exactly one point of intersection (or tangency) below the
level $y =1$.  Indeed,
otherwise $c\nu-\nu^2+r-1 >0$ implies that  $\gamma(t) > De^{\nu t}$ for all
$t $ where $\gamma(t) \leq 1$. As a consequence, $1=\gamma(s_0) > De^{\nu s_0}$  at some $s_0$ which 
implies (\ref{arti}), a contradiction.  
 
The above consideration and a direct computation show that there
exists a a unique line $y=p+qt$ which is tangent to the graphs of
 $De^{\nu t}$  and $\gamma(t) $ at the respective points $t_* < t^*$.
From the tangency conditions
$
q= D\nu e^{\nu t_*} = \gamma'(t^*), \  p = De^{\nu t_*} - qt_* = \gamma(t^*)-qt^*,
$
it follows easily  that
$$
p = \frac{q}{\nu} \ln \frac{D\nu e}{q}, \  q = \frac{(c\nu-\nu^2+r-1)\nu}{r\ln (rD) }, \  t_* = \frac{1}{\nu}\ln\frac{q}{D\nu}.
$$
It follows from the above construction that $\psi_+$ defined by   (\ref{pne}) is $C^1$- smooth
and $\psi_+(t) <  \gamma(t)$ for all $t\not= t^*$.  It is clear that, after making an arbitrarily small
change of  $p,q,t_*$,  we may assume that   $\psi_+(t) <  \gamma(t)$ for all $t \in {\mathbf R}$.

Hence, taking $\psi_+$ as in (\ref{pne}) and $\phi_+(t)=e^{\nu t}$,
we have to check only the second inequality  in {\bf
D2} on the interval $[t_*, +\infty)$.  This inequality can be
written as
$
b'(1-p)/q < b't +ce^{-\nu t}.
$
Since $y = b't +ce^{-\nu t}$ has a unique critical point (an absolute
minimum) at $t'= \nu^{-1}\ln(c\nu/b')$, the latter inequality 
amounts  to
\begin{equation}\label{pnu}
(1-p)\nu < q \ln (ec\nu/b').
\end{equation}
After recalling the definition of $p$ and $D$  and taking into account
that $\nu$ can be chosen as  close to $c/2$ as we want,
we rewrite  (\ref{pnu}) as $\omega < 2(2 + \ln \omega),$
where
$$
\omega = \frac{2r}{c^2/4 +r-1}\ln \frac{4b'r}{c^2}\quad  (=\frac cq ).
$$
Notice here that  the assumptions $c^2 > 4(1-r)$ and $c^2
<4/(1+(b')^{-1})$ imply   $r(b'+1)>1$ and  $c^2 < 4b'r$ so that $\omega
>0$.  Furthermore, since $\omega$ is decreasing in $c^2/4$, we find that
$$
\omega > \frac{2r}{b'/(1+b') +r-1}\ln r(b'+1)= \frac{2r(1+b')}{r(1+b')
-1}\ln r(b'+1)> 2.
$$
A direct graphical analysis shows that the interval $\omega \in
(0.14555\dots, 8.21093\dots)$ gives the solution of $\omega < 2(2 + \ln \omega)$. In
consequence, since we additionally have $\omega >2$, the latter
inequality is equivalent to $\omega < \omega_* =8.21
\dots$ which can be written as (\ref{posmotrim}). 
This proves Theorem \ref{main2}.  \hfill $\square$

\section{Proof of Theorem  \ref{mumi>2}} \label{5th}
\noindent The proof is divided into three claims. 

\noindent \underline{Claim I}:  {\it The propagation speed $c_\star$ is unique. } 
Indeed, suppose that $(\phi_1,\psi_1,c_1)$ and $(\phi_2,\psi_2,c_2)$, $c_1<c_2$, solves the nonlinear eigenvalue problem (\ref{3ade}). It follows from 
Lemmas  \ref{pred>1} and \ref{mainas}  that there exist $p <q$ such that  $\psi_1(t) > \psi_2(t)$ for all $ t \in \R\setminus[p,q]$. As a consequence, the closed set 
$$
\mathcal{S}:= \{s : \psi_1(t+s) \geq \psi_2(t), \ t \in \R\} \not= \R
$$
is non-empty and has a finite $s_*:=\inf \mathcal{S}$.  It is clear that 
$
\psi_1(t+s_*) \geq \psi_2(t), \ t \in \R, 
$ 
and since always  $\psi_1(t+s_*) > \psi_2(t) $ for $t \ll -1 $ and $t \gg 1$,
we deduce that $
\psi_1(\tau+s_*) = \psi_2(\tau) 
$ 
at some point $\tau$ (otherwise $s_\star > \inf \mathcal{S}$). By a similar argument,  there exists 
$t_*$ such that 
$
\phi_1(t+t_*) \geq \phi_2(t), \ t \in \R, 
$ 
and $
\phi_1(T+t_*) = \phi_2(T) 
$ 
for  some $T$. 
Suppose first that $t_* \leq s_*$. 
Without restricting the generality, we may assume 
that $s_* =0, \tau =0$. Then $t_* \leq 0$ so that $\phi_1(t) \geq \phi_2(t), \ t \in \R,$ and  thus we get
$$
0=(\psi_1-\psi_2)''(0)- c_1(\psi_1-\psi_2)'(0)+ (c_2-c_1)\psi_2'(0)+ b(\phi_1(-c_1h)-\phi_2(-c_2h))(1-\psi_1(0))>0,
$$
a contradiction.  Next, suppose  that $t_* > s_*$. 
We may assume again that  that $t_* =0, T =0$. Then $s_* <0$ so that $\psi_1(t) > \psi_2(t), \ t \in \R,$ and  thus we get
$$
0=(\phi_1-\phi_2)''(0)- c_1(\phi_1-\phi_2)'(0)+ (c_2-c_1)\phi_2'(0)+ r\phi_1(0)(\psi_1(0)- \psi_2(0))>0,
$$
a contradiction. Hence $c_1=c_2$ and Claim I is proved.  

\noindent \underline{Claim II}:  {\it  $c_\star \leq c_m:= \min\{c_\#, c_\circ\}$. } Let $(\phi_*,\psi_*,c_\star)$ be the solution of  (\ref{3ade}). On the contrary, suppose that $c_\star > c_m$ and take an arbitrary $c' \in (c_m, c_\star)$.  Then  $(\phi_*,\psi_*,c')$ is a lower solution:
$$
\phi''_*(t) -c'\phi_*'(t) + \phi_*(t)(1-r-\phi_*(t)+r\psi_*(t)) >0, \quad \psi_*''(t)-c'\psi'_*(t)+b\phi_*(t-c'h)(1-\psi_*(t))>0, \ t \in \R.  
$$
For the same $c'$ we consider  the upper solutions $
\Phi_+(t)= \min\{1,\phi_+(t)\}, \ \Psi_+(t)=\min\{
1,\psi_+(t)\}, 
$
 with $\phi_+, \psi_+$ defined in Subsections \ref{4.2}, \ref{4.3a}. By Lemma \ref{pred>1}, we may suppose (possibly, after a translation of $(\phi_*,\psi_*)$) that 
$\phi_*(t)  < \Phi_+(t), \ \psi_*(t) < \Psi_+(t), \ t \in \R$.  But then there exists (cf. the last part of Subsection \ref{upp}, starting  from formula (\ref{pfoin})) a monotone traveling front propagating at the velocity $c' < c_\star$. However, this contradicts to Claim I. 

\noindent \underline{Claim III}:  {\it Set $c_\star(h): = c_\star(r,b,h)$ for some fixed $r,b >0$. Then $c_\star(h)$ is a non-increasing function on its domain. } Suppose that $c_\star(h_1) > c_\star(h_2)$ for some $h_1 > h_2$.   Let $(\phi_j,\psi_j,c_\star(h_j))$ be respective solutions of  (\ref{3ade}). Then, for a fixed $c \in ( c_\star(h_2), c_\star(h_1))$, it holds
$$
\phi''_1(t) -c\phi_1'(t) + \phi_1(t)(1-r-\phi_1(t)+r\psi_1(t)) >0, \quad \psi_1''(t)-c\psi'_1(t)+b\phi_1(t-ch_1)(1-\psi_1(t))>0, \ t \in \R,  
$$
$$
\phi''_2(t) -c\phi_2'(t) + \phi_2(t)(1-r-\phi_2(t)+r\psi_2(t)) <0, \quad \psi_2''(t)-c\psi'_2(t)+b\phi_2(t-ch_1)(1-\psi_2(t))<0, \ t \in \R. 
$$
Moreover, due to Lemmas  \ref{pred>1} and \ref{mainas},  we may assume that 
$
\phi_1(t) < \phi_2(t), \ \psi_1(t) < \psi_2(t), \ t \in \R. 
$
Therefore $(\phi_j,\psi_j, c), \ j=1,2,$ forms a pair of upper and lower solutions for (\ref{3ade}) considered 
with $c$ and $h_1$. As a consequence, system (\ref{3ade})  with $h=h_1$ has two different propagation speeds: $c$ and $c_\star(h_1)> c$.  However, this  is a contradiction with  Claim I.   \hfill $\square$

\section*{Acknowledgments}  The authors express their gratitude  to the  referee, whose critical comments and valuable suggestions helped to 
improve the original version of this paper. This research was supported by FONDECYT (Chile), projects  1080034 and 1110309, and  by CONICYT (Chile) 
through PBCT program ACT-56.

\end{document}